\documentclass[12pt]{amsart}
\usepackage{fullpage}
\usepackage{colonequals}
\usepackage{amsmath,amssymb,amsthm,url}
\usepackage{xcolor}
\usepackage[utf8]{inputenc}
\usepackage[english]{babel}
\usepackage[alphabetic]{amsrefs}
\usepackage{bbm}
\usepackage{enumerate}
\usepackage{tikz, tikz-cd}
\tikzset{
  red_vertex/.style={circle, thick, draw=red},
  blue_vertex/.style={circle, thick, draw=NavyBlue},
  purple_vertex/.style={circle, thick, draw=violet},
  vertex/.style={circle, thick, draw=black},
  edge/.style={thick},
  red_edge/.style={thick, draw=red},
  blue_edge/.style={thick, draw=NavyBlue},
  purple_edge/.style={thick, draw=violet}
}
\usetikzlibrary{shapes.geometric, positioning, calc, backgrounds, decorations.pathreplacing}

\usepackage{graphicx}
\usepackage{centernot}
\usepackage{mathrsfs}
\usepackage[colorlinks=true, pdfstartview=FitH, linkcolor=blue, citecolor=blue, urlcolor=blue]{hyperref}

\allowdisplaybreaks

\newtheorem{theoremA}{Theorem}

\newtheorem{theorem}{Theorem}[section]
\newtheorem{lemma}[theorem]{Lemma}
\newtheorem{prop}[theorem]{Proposition}
\newtheorem{corollary}[theorem]{Corollary}

\newcommand{\set}[1]{\{#1\}}

\theoremstyle{definition}
\newtheorem{defn}[theorem]{Definition}

\newtheorem{remark}[theorem]{Remark}
\newtheorem{ex}[theorem]{Example}

\newcommand{\abs}[1]{\lvert#1\rvert}
\newcommand{\B}{\mathcal{B}}
\newcommand{\cP}{\mathcal{P}}

\newcommand{\Gbar}{\overline{G}}
\newcommand{\Htil}{\widetilde{H}}

\newcommand{\dbledge}{
  \raisebox{0.6ex}{
    \resizebox{1.4ex}{!}{
      \begin{tikzpicture}[baseline=0ex]
          \draw[line width=3pt] (0,0) to[bend left=75] (1.4,0);
          \draw[line width=3pt] (0,0) to[bend right=75] (1.4,0);
          \fill (0,0) circle (0.23); 
          \fill (1.4,0) circle (0.23);
      \end{tikzpicture}%
    }
  }
}

\newcommand{\BellMulti}[2]{\mathcal{B}^{\mkern-6mu\protect\dbledge}_{#1}\mkern-6mu(#2)}

\newcommand{\BellMultiIntro}[2]{\mathcal{B}^{\mkern-6mu\protect\dbledge}_{#1}\mkern-2mu(#2)}

\newcommand{\zap}[1]{\langle #1 \rangle}

\title{Bell coloring graphs: realizability and reconstruction}
\author{Shamil Asgarli}
\address{Department of Mathematics \& Computer Science \\ Santa Clara University \\ CA 95053 \\ USA}
\email{sasgarli@scu.edu}

\author{Sara Krehbiel}
\address{Department of Mathematics \& Computer Science \\ Santa Clara University \\ CA 95053 \\ USA}
\email{skrehbiel@scu.edu}

\author{Simon MacLean}
\address{Department of Mathematics \& Computer Science \\ Santa Clara University \\ CA 95053 \\ USA}
\email{smaclean@scu.edu}

\subjclass[2020]{Primary 05C15; Secondary 05C60, 05C70, 05C75}
\keywords{Bell coloring graph, reconfiguration graph, graph reconstruction, matching graph}

\begin{document}

\begin{abstract}
Given a graph $G$, the Bell $k$-coloring graph $\mathcal{B}_k(G)$ has vertices given by partitions of $V(G)$ into $k$ independent sets (allowing empty parts), with two partitions adjacent if they differ only in the placement of a single vertex. We first give a structural classification of cliques in Bell coloring graphs. We then show that all trees and all cycles arise as Bell coloring graphs, while $K_4-e$ is not a Bell coloring graph and, more generally, $K_n-e$ is not an induced subgraph of any Bell coloring graph whenever $n \ge 6$. We also prove two reconstruction results: the Bell $3$-coloring graph is a complete invariant for trees, and the Bell $n$-coloring multigraph determines any graph up to universal vertices.
\end{abstract}

\maketitle

\section{Introduction}\label{sec:intro}

Graph coloring is a vibrant area of research that bridges theoretical questions and practical applications. Several excellent references survey both the classic landscape and modern frontiers; see, for example, the monograph \cite{JT95}, the themed collection \cite{BW15}, and the recent text \cite{Cra24}, which focuses on contemporary techniques. One research theme that has gained momentum is the study of not only the colorings themselves but also the \emph{reconfiguration graphs} that connect them. In a standard $k$-coloring graph, vertices represent proper $k$-colorings, and edges correspond to changing the color of a single vertex. These graphs retain much of the structure of the underlying graph. Indeed, recent work has shown that coloring graphs can serve as complete graph invariants \cites{HSTT24, BBHvdHHP25, AKL25}.

In this paper, we study a compressed version of a coloring graph obtained by treating the color classes as indistinguishable. In this model, we retain the partition of vertices into independent sets but `forget' the specific color labels assigned to each set. Given a base graph $G$ and $k\in\mathbb{N}$, we define the \emph{Bell $k$-coloring graph} $\mathcal{B}_k(G)$ as follows.
\begin{itemize}
    \item The vertices of $\mathcal{B}_k(G)$ are partitions of the vertex set $V(G)$ into $k$ independent sets (some possibly empty); we call such partitions \emph{stable} $k$-partitions.
    \item Two distinct partitions $P_1$ and $P_2$ are adjacent if they differ only by the placement of a single vertex $v\in V(G)$, formalized as $P_1-v = P_2-v$.
\end{itemize}
The notation $P-v$ means the partition $P$ with the vertex $v$ removed from its part; that is, $P-v$ is the restriction of $P$ to $G-v$. Figures~\ref{fig:Bell3-K13}~and~\ref{fig:Bell3-K3-plus-K1} depict examples of Bell coloring graphs with vertices labeled to indicate the corresponding partitions and edges labeled to indicate the vertex or vertices responsible for each edge.

The name `Bell coloring graph' is motivated by the Bell numbers, which count the total number of set partitions. This object, also studied by Haas \cite{Haa12} as the `isomorphic color graph', interpolates between enumerative invariants (graphical Bell numbers \cite{DP09}) and reconfiguration structure. In contrast to standard coloring graphs, only a handful of papers (e.g., \cite{Haa12}, \cite{FM14}) address Bell coloring graphs.

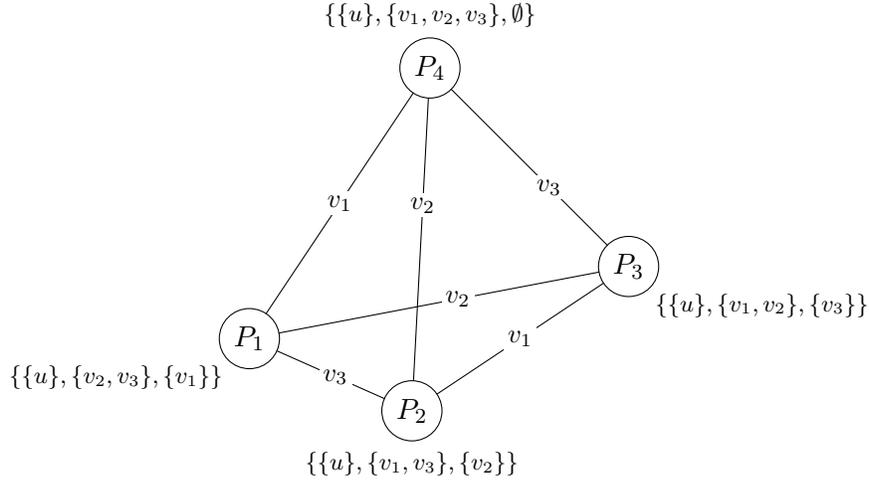
\begin{figure}[h]
\centering
\begin{tikzpicture}[
    vertex/.style={circle, draw, fill=white, minimum size=8mm, inner sep=0pt, font=\small},
    edge label/.style={midway, fill=white, font=\footnotesize, inner sep=1.5pt},
    part label/.style={font=\scriptsize, color=black, align=center},
    x={(-0.6cm, -0.4cm)}, y={(1cm, 0cm)}, z={(0cm, 1cm)},
    scale=1.2
]

\coordinate (P1) at (0, -2, 0);
\coordinate (P2) at (2, 1, 0);
\coordinate (P3) at (-2, 1, 0);
\coordinate (P4) at (0, 0, 3);

\draw (P1) -- (P2) node[edge label, pos=0.53] {$v_3$};
\draw (P1) -- (P3) node[edge label, pos=0.55] {$v_2$}; 
\draw (P2) -- (P3) node[edge label] {$v_1$}; 
\draw (P4) -- (P1) node[edge label, pos=0.5] {$v_1$};
\draw (P4) -- (P2) node[edge label, pos=0.4] {$v_2$};
\draw (P4) -- (P3) node[edge label, pos=0.6] {$v_3$};

\node[vertex] at (P1) {$P_1$};
\node[part label, below left=0.3cm of P1] {$\{\{u\}, \{v_2, v_3\}, \{v_1\} \}$};

\node[vertex] at (P2) {$P_2$};
\node[part label, below=0.4cm of P2] {$\{\{u\}, \{v_1, v_3\}, \{v_2\} \}$};

\node[vertex] at (P3) {$P_3$};
\node[part label, below right=0.3cm of P3] {$\{\{u\}, \{v_1, v_2\}, \{v_3\} \}$};

\node[vertex] at (P4) {$P_4$};
\node[part label, above=0.4cm of P4] {$\{\{u\}, \{v_1, v_2, v_3\}, \emptyset\}$};

\end{tikzpicture}
\caption{Illustration of $\mathcal{B}_3(K_{1,3})\cong K_4$. The claw graph $K_{1, 3}$ has vertex set $\{u, v_1, v_2, v_3\}$ and three edges $uv_i$ for $i=1,2,3$. Edge labels indicate the vertices responsible for the adjacency.} 
\label{fig:Bell3-K13}
\end{figure}

\begin{figure}[h]
\centering
\begin{tikzpicture}[
    vertex/.style={circle, draw, fill=white, minimum size=8mm, inner sep=0pt, font=\small},
    edge label/.style={midway, fill=white, font=\footnotesize, inner sep=1.5pt},
    part label/.style={font=\scriptsize, color=black, align=center},
    x={(-0.6cm, -0.4cm)}, y={(1cm, 0cm)}, z={(0cm, 1cm)},
    scale=1.2
]

\coordinate (P1) at (0, -2, 0);
\coordinate (P2) at (2, 1, 0);
\coordinate (P3) at (-2, 1, 0);
\coordinate (P4) at (0, 0, 3);

\draw (P1) -- (P2) node[edge label, pos=0.53] {$w$};
\draw (P1) -- (P3) node[edge label, pos=0.55] {$w$}; 
\draw (P2) -- (P3) node[edge label] {$w$}; 
\draw (P4) -- (P1) node[edge label, pos=0.5] {$w, v_1$};
\draw (P4) -- (P2) node[edge label, pos=0.4] {$w, v_2$};
\draw (P4) -- (P3) node[edge label, pos=0.6] {$w, v_3$};

\node[vertex] at (P1) {$P_1$};
\node[part label, below left=0.3cm of P1] {$\{\{w, v_1\}, \{v_2\}, \{v_3\}, \emptyset\}$};

\node[vertex] at (P2) {$P_2$};
\node[part label, below=0.4cm of P2] {$\{\{w, v_2\}, \{v_1\}, \{v_3\}, \emptyset\}$};

\node[vertex] at (P3) {$P_3$};
\node[part label, below right=0.3cm of P3] {$\{\{w, v_3\}, \{v_1\}, \{v_2\}, \emptyset \}$};

\node[vertex] at (P4) {$P_4$};
\node[part label, above=0.4cm of P4] {$\{\{w\}, \{v_1\}, \{v_2\}, \{v_3\}\}$};

\end{tikzpicture}
\caption{Illustration of $\mathcal{B}_4(K_{3}\sqcup K_1)\cong K_4$. The graph $K_3\sqcup K_1$ has vertex set $\{v_1, v_2, v_3, w\}$ with edges $v_1v_2, v_2v_3, v_3v_1$. Edge labels indicate the vertices responsible for the adjacency.} 
\label{fig:Bell3-K3-plus-K1}
\end{figure}
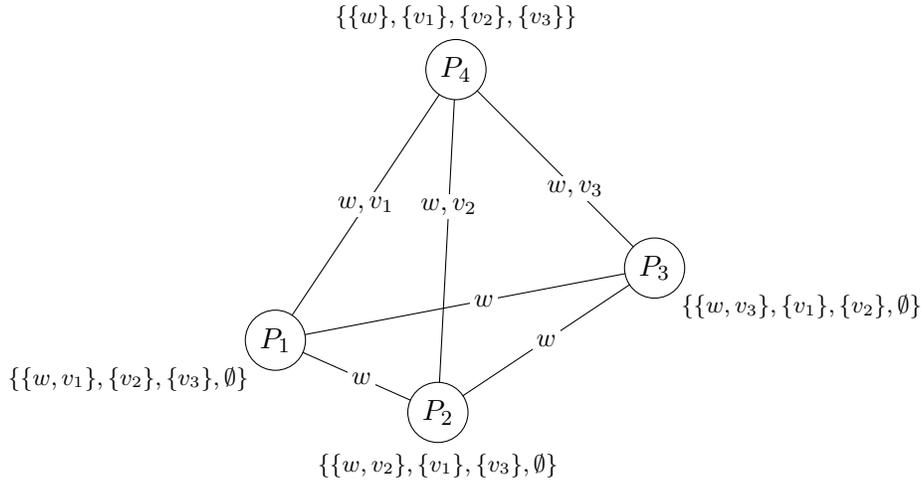

Our work is motivated in part by the question of which graphs can arise as Bell coloring graphs. Every coloring graph is a Bell coloring graph because $\B_k(G\sqcup K_k) \cong \mathcal{C}_k(G)$ for any graph $G$ and integer $k\ge 1$ \cite{FM14}*{Proposition 2.5}. However, the class of graphs realizable as standard coloring graphs is quite limited. For example, $K_1$ and $K_2$ are the \emph{only} trees that are realizable as coloring graphs, and $C_3$, $C_4$, and $C_6$ are the \emph{only} cycles that are realizable as coloring graphs \cite{BFHRS16}*{Theorems 7 and 23}. In contrast, we prove the following: 

\begin{theoremA}[Theorem~\ref{thm:trees-cycles}]
All trees and all cycles are realizable as Bell coloring graphs.
\end{theoremA}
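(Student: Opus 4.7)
The plan is to give separate explicit constructions for cycles and for trees. Because every coloring graph is a Bell coloring graph via $\mathcal{B}_k(H \sqcup K_k) \cong \mathcal{C}_k(H)$, and coloring graphs realize only $K_1, K_2$ among trees and only $C_3, C_4, C_6$ among cycles \cite{BFHRS16}, the interesting case is to exhibit constructions where the indistinguishability of parts is essential.

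For cycles, I would verify small cases by direct enumeration of stable $k$-partitions: $C_3 \cong \mathcal{B}_3(K_3 \sqcup K_1)$ (the three partitions place the isolated vertex alongside each triangle vertex and are pairwise adjacent via moving that vertex); $C_4 \cong \mathcal{B}_4(C_4)$ (the bipartition, the two half-refinements of the bipartition, and the discrete partition form a $4$-cycle); $C_5 \cong \mathcal{B}_3(C_5)$ (the five $2{+}2{+}1$ stable $3$-partitions of $C_5$, indexed by the choice of singleton, cycle rotationally); and $C_6 \cong \mathcal{B}_3(K_3 \sqcup K_2)$ (the six ordered placements of the two $K_2$-endpoints into the three triangle-anchored parts form a $6$-cycle). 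For $C_n$ with $n \geq 7$, I would look for a parameterized family $G_n$ such that every stable partition admits exactly two single-vertex moves; a natural template is a clique anchor (to break color symmetry) joined with a carefully designed gadget whose admissible configurations cyclically trace out $C_n$.

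For trees, I would proceed by induction on $|V(T)|$, with base case $K_1 \cong \mathcal{B}_1(K_1)$. Small paths admit direct realizations: $P_2 \cong \mathcal{B}_3(\overline{K_2})$, $P_3 \cong \mathcal{B}_3(C_4)$, and $P_4 \cong \mathcal{B}_3(G_4)$ where $G_4$ is a triangle with a length-two tail attached at one triangle vertex. In the inductive step, given a realization $(G', k)$ of $T' = T - \ell$ for some leaf $\ell$, I would attach a small ``leaf gadget'' to $G'$ so that exactly one new stable $k$-partition is created in $\mathcal{B}_k(G)$, adjacent only to the partition representing the parent of $\ell$ in $T'$. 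The main obstacle in both parts is to rule out spurious partitions and spurious edges: even local modifications to $G$ can globally alter $\mathcal{B}_k(G)$, so any leaf-attachment or cycle-extending gadget must be verified to introduce no unintended partitions or adjacencies elsewhere in the Bell coloring graph.
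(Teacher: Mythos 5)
Your proposal identifies a sensible high-level strategy — small cases plus an inductive leaf-attachment for trees and a parameterized family for cycles — and the small-case realizations you list do check out (your $C_5 \cong \mathcal{B}_3(C_5)$ is in fact the $k=2$ case of the paper's Lemma~\ref{lem:odd-cycles}, since $\overline{C_5}\cong C_5$). But the proposal does not actually deliver the two things it explicitly promises: for $C_n$ with $n\ge 7$ you only say you ``would look for'' a suitable family $G_n$, and for the inductive step on trees you only say you ``would attach a small leaf gadget,'' without specifying either construction. You even flag this yourself as the main obstacle (``any leaf-attachment or cycle-extending gadget must be verified to introduce no unintended partitions or adjacencies''), but you never resolve it. As written, this is a research plan, not a proof; the theorem is exactly the assertion that such gadgets exist, and that is where all the work is.

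The missing ingredient — and what makes the verification tractable in the paper — is a change of setting: Proposition~\ref{prop:matchings} shows that for triangle-free $G$ on $n$ vertices, $\mathcal{M}_k(G)\cong\B_{n-k}(\overline{G})$, where $\mathcal{M}_k$ is the matching reconfiguration graph. This lets one work with (near-perfect) matchings instead of set partitions, which is far easier to control locally. Concretely, the paper realizes $C_{2k+1}$ as the near-perfect matching graph of $C_{2k+1}$ itself (Lemma~\ref{lem:odd-cycles}), realizes $C_{2k+2}$ as the near-perfect matching graph of an ``ear graph'' $G_{2k}$ (Lemma~\ref{lem:even-cycles}), and realizes every tree $T$ as the near-perfect matching graph of the edge-subdivision $T_s$ (Corollary~\ref{cor:tree-subdivision}). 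The tree case is driven by a joining lemma (Proposition~\ref{prop:near-perfect-matchings}): if $H_1,H_2$ have uniquely unmatched vertices $v_1,v_2$, then gluing $H_1$ and $H_2$ at $v_1\sim v_2$ produces a graph whose near-perfect matching graph is exactly the one-point join of $\mathcal{M}_{k_1}(H_1)$ and $\mathcal{M}_{k_2}(H_2)$. That joining lemma is precisely the ``attach a leaf gadget and verify nothing spurious appears'' step that your outline calls for but leaves open — only it is proved at the matching-graph level, where the local-to-global bookkeeping becomes manageable, and then transported to Bell coloring graphs by the complement correspondence. Without this reformulation (or some other explicit, verified gadget), your proposal has a genuine gap at its central steps.
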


We prove Theorem~\ref{thm:trees-cycles} by showing that a more general class of graphs, namely certain reconfiguration graphs of matchings, are Bell coloring graphs. 

The papers \cites{BFHRS16, ABFR17} examine forbidden subgraphs in standard coloring graphs. To treat the analogous question for Bell coloring graphs, we first obtain a structural description of their cliques (see Theorem~\ref{thm:clique_classification}), showing that every clique belongs to one of two explicit families. This classification allows us to construct an infinite family of forbidden induced subgraphs of Bell coloring graphs.

\begin{theoremA}[Theorem~\ref{thm:forbidden}]
The graph $K_{6}-e$ is not an induced subgraph of any Bell coloring graph. Hence, the set of graphs $K_n-e$ for $n\ge 6$ is an infinite family of forbidden induced subgraphs.
\end{theoremA}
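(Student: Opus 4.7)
The plan is to use the clique classification (Theorem~\ref{thm:clique_classification}) to impose strong constraints on any $K_5$ inside a Bell coloring graph, and then show that the two $K_5$'s inside $K_6-e$ cannot coexist. I would label a hypothetical induced $K_6-e$ by distinct stable partitions $P_1,\ldots,P_6$ with $P_5P_6$ the unique non-edge, so that $S=\{P_1,\ldots,P_5\}$ and $T=\{P_1,\ldots,P_4,P_6\}$ are $K_5$-cliques sharing the $K_4$ on $\{P_1,P_2,P_3,P_4\}$.

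The crucial input from Theorem~\ref{thm:clique_classification} is that, among the two explicit families of cliques, only the ``move-one-vertex'' family admits cliques of size $\ge 5$; the other family is bounded above by $K_4$, as can be read directly from its description. I would therefore conclude that both $S$ and $T$ are move-one-vertex cliques: there exist vertices $x,y\in V(G)$ and stable partitions $Q_x$ of $V(G)\setminus\{x\}$ and $Q_y$ of $V(G)\setminus\{y\}$ such that $P_i-x=Q_x$ for every $P_i\in S$, and $P_j-y=Q_y$ for every $P_j\in T$. A short argument shows that $x$ and $y$ are uniquely determined by $S$ and $T$, since otherwise all partitions in the corresponding $K_5$ would have to coincide.

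The proof then splits into two short cases. If $x=y$, then $P_5-x=Q_x=P_6-x$, so $P_5P_6$ is an edge in $\mathcal{B}_k(G)$, contradicting the missing edge. If $x\neq y$, each of $P_1,\ldots,P_4$ satisfies both $P_i-x=Q_x$ and $P_i-y=Q_y$; the first fixes the placement of $y$ identically across the four partitions, the second fixes the placement of $x$ identically, and the remaining vertices are pinned down by $Q_x$ (or equivalently $Q_y$), forcing $P_1=\cdots=P_4$ and contradicting distinctness. The extension to $K_n-e$ for $n\ge 6$ is automatic, since any such graph contains $K_6-e$ as the induced subgraph obtained by retaining the two non-adjacent vertices together with any four common neighbors.

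The hard part will be the step where I invoke the classification: the entire argument rides on the fact that cliques of size at least $5$ in a Bell coloring graph are necessarily of move-one-vertex type, and I would need to confirm that the second family in Theorem~\ref{thm:clique_classification} is visibly capped at $K_4$ with no room to grow into a $K_5$. Once that bound is in hand, the case analysis above is essentially immediate and the induced-subgraph conclusion propagates cleanly to all $K_n-e$ with $n\ge 6$.
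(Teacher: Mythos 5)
Your proposal follows essentially the same route as the paper: both proofs rest on Theorem~\ref{thm:clique_classification}, which caps the $T$-clique families at order $4$ and hence forces both $K_5$'s of a hypothetical induced $K_6-e$ to be $S$-cliques, and both then derive a contradiction from the anchors of the two $K_5$'s via the shared $K_4$ (the paper does this by invoking anchor uniqueness, Lemma~\ref{lem:anchor_uniqueness}, to force a common anchor and hence the edge $P_5P_6$; you split into the cases $x=y$ and $x\neq y$, which amounts to the same thing). The only point where your write-up is too quick is the $x\neq y$ case: the claim that $P_i-x=Q_x$ together with $P_i-y=Q_y$ ``pins down'' $P_i$, forcing $P_1=\cdots=P_4$, is false for a \emph{pair} of partitions. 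Since parts are unlabeled, two distinct partitions can agree after deleting $x$ and after deleting $y$; by Lemma~\ref{lem:double_edge} this happens exactly when one of them is $\{\{x,y\},\emptyset\}\cup R$ and the other is $\{\{x\},\{y\}\}\cup R$. The conclusion you need still holds, because that same lemma shows the decomposition of the first partition is unique, so \emph{at most two} distinct partitions can share both restrictions, and you have four distinct ones $P_1,\dots,P_4$ — this is precisely the argument of Lemma~\ref{lem:anchor_uniqueness}. So the gap is minor and repaired by a one-line appeal to Lemma~\ref{lem:double_edge} (or simply citing Lemma~\ref{lem:anchor_uniqueness} on the shared triangle, as the paper does); the remaining steps, including the reduction of $K_n-e$ to $K_6-e$ for $n\ge 6$, are fine.
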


Another natural question is whether the structure of the reconfiguration graph uniquely determines the base graph. This is known for standard coloring graphs; in particular, if $T$ is a tree, $\mathcal{C}_3(T)$ uniquely determines $T$ (\cite{AKL25}*{Theorem 1.1} or \cite{BBHvdHHP25}*{Theorem 1.2}). Having lost the color labels, $\mathcal{B}_3(T)$ has less symmetry than the $3$-coloring graph $\mathcal{C}_3(T)$. Despite this compression, our next result guarantees that a tree $T$ can be reconstructed uniquely from its Bell $3$-coloring graph. Let $\mathsf{Trees}$ and $\mathsf{Graphs}$ denote the sets of isomorphism classes of all (finite) trees and graphs, respectively. 

\begin{theoremA}[Theorem~\ref{thm:tree-reconstruction}]
The map $\mathsf{Trees} \to \mathsf{Graphs}$ given by $T\mapsto \mathcal{B}_3(T)$ is injective.
\end{theoremA}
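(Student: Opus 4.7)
Since $T$ is connected and bipartite with a unique bipartition $(A, B)$, the partition $P_0 = \{A, B, \emptyset\}$ is a distinguished stable $3$-partition of $T$, and hence a distinguished vertex of $\mathcal{B}_3(T)$. The plan is to identify $P_0$ from $\mathcal{B}_3(T)$ alone, recover $V(T)$ and the bipartition $(A, B)$ from the neighborhood of $P_0$, and then decode the edges of $T$ from short substructures around these neighbors.

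First, I would characterize $P_0$ intrinsically using Theorem~\ref{thm:clique_classification}. I expect $P_0$ to be distinguished by the pattern of cliques it participates in, together with the fact that its neighbors are precisely the partitions obtained by moving a single vertex of $T$ into the empty part; this translates into a concrete extremal property of $P_0$ detectable from $\mathcal{B}_3(T)$. Small-tree cases --- for instance when $|A| = 1$ and $T$ is a star --- need to be handled separately, since then $P_0$ may not be locally distinguishable from other vertices; but in these cases the global structure of $\mathcal{B}_3(T)$ (for example, the highly symmetric $\mathcal{B}_3(K_{1,4}) \cong K_{4,4}$) is rigid enough to pin down $T$ directly.

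Once $P_0$ is located, its neighbors $N_v$ biject (modulo the small cases above) with $V(T)$ via $v \mapsto N_v$, where $N_v$ is obtained by moving $v$ from its bipartition class into the third (previously empty) part. A direct case analysis of when $N_v$ and $N_w$ are adjacent --- depending on whether $v, w$ lie on the same or opposite sides of $(A, B)$ and on the sizes of $|A|$ and $|B|$ --- then separates the two bipartition classes, so that $V(T)$ and the bipartition $(A, B)$ are reconstructed.

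The most delicate step is the recovery of the edge set. The key observation is that $vw \notin E(T)$ is equivalent to $\{v, w\}$ being independent, which admits a "witness" stable $3$-partition $Q_{v,w}$ placing $v$ and $w$ into a common part; such a $Q_{v,w}$ is a common neighbor of $N_v$ and $N_w$, adjacent to each via moving the other vertex. To make this criterion unambiguous I would characterize, among all common neighbors of $N_v$ and $N_w$, those that specifically witness the non-edge $vw$, using the clique classification to distinguish them from $P_0$ itself and from witnesses $Q_{v',w'}$ of unrelated non-edges that happen to neighbor both $N_v$ and $N_w$. The hard part will be verifying that this refined criterion is sharp --- catching every non-edge and no false positives --- and this is where the acyclicity of $T$ should do most of the work, by severely constraining which configurations of witnesses can coexist.
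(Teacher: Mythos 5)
Your overall strategy for the generic case coincides with the paper's: locate the bipartition vertex $Z=\{A,B,\emptyset\}$, label its neighbors $Z_v$ by vertices of $T$, and read off adjacency in $T$ from common neighbors of $Z_u$ and $Z_v$. In that regime your ``delicate step'' does work, and in fact more simply than you fear: when $|A|,|B|\ge 3$ no edge of $\B_3(T)$ is doubly realized (Lemma~\ref{lem:double_edge} would force a partition with two singletons and one huge part), so each edge has a unique responsible vertex, and one shows directly that \emph{any} common neighbor of $Z_u$ and $Z_v$ other than $Z$ must be realized via the pair $\{u,v\}$ itself; hence ``$Z_u,Z_v$ have a common neighbor distinct from $Z$'' is already equivalent to $uv\notin E(T)$, with no false positives from witnesses of unrelated non-edges and no appeal to acyclicity beyond the fact that two adjacent vertices of a tree cannot both be leaves (this is the paper's Lemma~\ref{lem:generic-tree-reconstruction}). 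Identification of $Z$ is also simpler than a clique analysis: it is the unique vertex of degree $n$ when $|A|\ge 3$.

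The genuine gap is that your exceptional cases are not just ``small trees.'' The construction breaks for the entire infinite family with $|A|=2$, i.e.\ the double brooms $B(3,a,b)$, which have arbitrarily large order. There, first, $Z$ is \emph{not} distinguishable as an extremal vertex: its degree drops to $n-1$ and other partitions (e.g.\ $Z_x$ for a leaf $x$) attain the same degree, and doubly realized edges appear, so the uniqueness of responsible vertices underlying your witness criterion fails. Second, the labeling $v\mapsto Z_v$ is no longer injective: if $A=\{u,w\}$, then moving $u$ or $w$ into the empty part produces the same multiset partition, so $Z_u=Z_w$ and the neighbors of $Z$ do not biject with $V(T)$. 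Your proposal only flags $|A|=1$ (stars), where indeed $\B_3(K_{1,n-1})$ is regular and rigid; but $|A|=2$ requires a genuinely different argument, and the paper supplies one by computing $|E(\B_3(B(3,a,b)))|=(2(a+b)+1)2^{a+b-1}+2^a+2^b-1$ (Lemma~\ref{lem:double-broom-reconstruction}), which pins down $\{a,b\}$, after first separating the three regimes (star, double broom, generic) by the degree sequence of $\B_3(T)$ (Lemma~\ref{lem:degree-classification}). Without some replacement for the witness argument in the $|A|=2$ case, your proof as outlined does not establish injectivity on all trees.
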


We also find it useful to work with a multigraph variant, the \emph{Bell coloring multigraph}, denoted $\BellMulti{k}{G}$, which uses multiple edges to encode how many vertices are responsible for each adjacency. If $P$ and $P'$ are two stable partitions of $G$, then in the Bell coloring graph, $P$ and $P'$ are adjacent whenever there exists a vertex $v\in V(G)$ with $P-v = P'-v$. Thus, multiple vertices may witness the same adjacency, but $\B_k(G)$ still records only a single edge between $P$ and $P'$. In the multigraph version, we instead add a separate parallel edge between $P$ and $P'$ for each distinct vertex $v$ such that $P-v = P'-v$. 

The additional information means that Bell coloring multigraphs serve as a more refined graph invariant. For example, Figures~\ref{fig:Bell3-K13}~and~\ref{fig:Bell3-K3-plus-K1} show that $\B_3(K_{1,3})\cong \B_4(K_3\sqcup K_1)$. However, each edge incident to the partition $P_4$ in the latter graph is witnessed by two vertices, so 
$\BellMulti{3}{K_{1,3}}\not\cong\BellMulti{4}{K_3\sqcup K_{1}}$. 

Yet even a Bell coloring multigraph cannot serve as a complete graph invariant. For instance, $\BellMulti{3}{K_{1,3}}\cong \BellMulti{2}{\overline{K}_3}$. This ambiguity arises because the center vertex in $K_{1,3}$ is adjacent to every other vertex. When a vertex $v$ is adjacent to every vertex in $G-v$, we call $v$ a \emph{universal vertex}. 
If $G$ has a universal vertex $w$, then $\B_{k+1}(G) \cong \B_k(G-w)$ for every $k\in\mathbb{N}$ due to the natural bijection between stable $k$-partitions of $G-w$ and stable $(k+1)$-partitions of $G$ in which $\{w\}$ forms an additional singleton part.

To handle this, let $U(G)$ denote the set of universal vertices of $G$. We define the \emph{core} of $G$, denoted $G^{\circ}$, as the induced subgraph $G[V(G)\setminus U(G)]$. We define the equivalence relation $\sim_{\text{uni}}$ on $\mathsf{Graphs}$ as follows: $G\sim_{\text{uni}} G'$ if their cores are isomorphic, $G^{\circ} \cong (G')^{\circ}$.
The equivalence classes of graphs under this relation form the set $\mathsf{Graphs}^{\circ} = \mathsf{Graphs}/\!\sim_{\text{uni}}$.

Our final result shows that a base graph of order $n$ can be reconstructed from its Bell $n$-coloring multigraph up to universal vertices:

\begin{theoremA}[Theorem~\ref{thm:general-reconstruction-multigraph:intro}]
The map $\mathsf{Graphs}^{\circ} \to \mathsf{Multigraphs}$ given by $G\mapsto \BellMultiIntro{|V(G)|}{G}$  is injective.
\end{theoremA}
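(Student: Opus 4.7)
The plan is to reconstruct $G^{\circ}$ from $\BellMulti{|V(G)|}{G}$ in three stages: reduce to the coreful case, locate the discrete partition $P_0$ (where every vertex is a singleton) in the multigraph, and read off $\overline{G^{\circ}}$ from the neighborhood of $P_0$ via its identification as a line graph.

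First I would establish the reduction. If $w\in U(G)$ is universal, then $w$ must form its own singleton part in every stable $|V(G)|$-partition, so $P\mapsto P-w$ is a bijection from the vertex set of $\BellMulti{|V(G)|}{G}$ to that of $\BellMulti{|V(G)|-1}{G-w}$. Since $w$'s only possible move is the no-op of swapping its singleton with an empty slot, $w$ witnesses no non-trivial adjacency, and the bijection preserves edges together with their multiplicities. Iterating over $U(G)$ yields $\BellMulti{|V(G)|}{G}\cong \BellMulti{|V(G^{\circ})|}{G^{\circ}}$, so we may assume $G$ is coreful.

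Next I would classify the multi-edges. A case analysis on the parts containing two distinct witnesses $v_1,v_2$ of an adjacency shows that an edge of $\BellMulti{|V(G)|}{G}$ has multiplicity at least~$2$ if and only if it is a \emph{split edge}: one endpoint has a $2$-element part $\{v_1,v_2\}$ (necessarily a non-edge of $G$) together with an empty part, and the other endpoint is obtained by splitting this part into the singletons $\{v_1\}$ and $\{v_2\}$. The multiplicity is then exactly $2$, witnessed by $v_1$ and $v_2$. In particular, $P_0$ has only double edges incident (every move from $P_0$ merges two non-adjacent singletons), with multigraph degree $2\bigl(\binom{|V(G)|}{2}-|E(G)|\bigr)$. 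A direct counting argument shows that any ``all-double'' partition $P^*$ with $t$ two-element parts and $s$ singletons has multigraph degree $2t+2m_s(P^*)\le 2\bigl(\binom{|V(G)|}{2}-|E(G)|\bigr)$, where $m_s(P^*)$ counts non-edges of $G$ among the singletons of $P^*$; the equality case forces the $2$-parts of $P^*$ to consist of ``near-universal'' pairs. Thus $P_0$ is distinguished as an all-double vertex of maximum degree; in the rare ties (e.g., for $G=K_{|V(G)|}-M$ with $M$ a perfect matching) the rigidity of the equality case will be used to show each tied candidate yields the same reconstruction.

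Finally, for any such distinguished $P^*$ the neighbors correspond bijectively to the non-edges of $G$ (splits of $2$-parts or merges of non-adjacent singleton pairs), and two neighbors are adjacent in $\BellMulti{|V(G)|}{G}$ precisely when the corresponding non-edges share a vertex; this adjacency is witnessed by that shared vertex and, by the classification of split edges above, is a single edge. Hence the simple subgraph induced on $N(P^*)$ is isomorphic to $L(\overline G)$. Since $G$ is coreful, $\overline G$ has no isolated vertex, so Whitney's theorem recovers $\overline G$ from $L(\overline G)$ uniquely, the only exception being $\overline G\in\{K_3,K_{1,3}\}$; this is resolved by comparing the total vertex count of $\BellMulti{|V(G)|}{G}$ (the graphical Bell number of $G$) against the two candidates. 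Taking complements yields $G=G^{\circ}$. The main obstacle will be the case analysis of the second stage: giving an intrinsic characterization of $P_0$ via the all-double max-degree criterion and verifying that all tied candidates produce the same $L(\overline G)$ up to isomorphism.
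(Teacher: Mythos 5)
Your proposal is correct in outline and, for the central reconstruction, follows essentially the same path as the paper: you characterize the doubly realized edges, single out the ``all-double'' (totally-doubled) partitions, and identify the singleton partition among them by maximizing degree --- your equality analysis forcing ``near-universal pairs'' is exactly the paper's Lemmas~\ref{lem:D-max-two} and~\ref{lem:pair-dominating-twins-max}, where those pairs are dominating twins --- and then you read off $L(\overline{G})$ from the induced neighborhood (Proposition~\ref{prop:NQ-is-linegraph}) and invoke Whitney. Two steps are genuinely different. For the tied maximizers $P^*$ you propose a direct verification that $N(P^*)$ induces $L(\overline{G})$; the paper instead builds an explicit automorphism of $\BellMulti{n}{G}$ that flips all dominating-twin pairs and carries $P^*$ to $Q$ (Lemma~\ref{lem:involution-max-degree}). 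Both work: your check is feasible precisely because a dominating-twin pair $\{u_i,w_i\}$ has $u_iw_i$ as its only incident nonedge, but it has to be carried out for every candidate, which is what the automorphism packages once and for all. More substantially, you resolve the $K_3$ versus $K_{1,3}$ ambiguity by a vertex count, whereas the paper uses a local criterion (Lemma~\ref{lem:distinguish-triangles}, resting on the clique classification, Theorem~\ref{thm:clique_classification}): a triangle of $N(Q)$ has a common neighbor other than $Q$ exactly when the corresponding component of $\overline{G}$ is $K_3$. Your counting route avoids the clique machinery entirely, which is a real simplification at that step.

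One point does need repair. Whitney's exception is per connected component, not global: since $\overline{G}$ may be disconnected, $L(\overline{G})$ fails to determine $\overline{G}$ whenever \emph{some} component of $L(\overline{G})$ is a triangle (it cannot distinguish $K_3\sqcup P_4$ from $K_{1,3}\sqcup P_4$, say), so ``the only exception being $\overline{G}\in\{K_3,K_{1,3}\}$'' and ``the two candidates'' understate the ambiguity. Your own fix extends, though: $G$ is the join of the complements of the components of $\overline{G}$, so the vertex count of $\BellMulti{n}{G}$ factors as a product over components, the factor for a component $H$ being the number of partitions of $V(H)$ into cliques of $H$; this factor is $5$ for $K_3$ and $4$ for $K_{1,3}$, and with the unambiguous factors pinned down by the line graph, $5^a4^b$ with $a+b$ fixed determines $a$, hence the multiset of components. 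You should state and prove this componentwise version. Finally, your opening reduction to the coreful case needs the multigraph strengthening of $\B_{k+1}(G)\cong\B_k(G-w)$ for $w$ universal (a universal vertex witnesses no edge, so multiplicities are preserved); this is true and easy, but it is an extra step the paper sidesteps by reconstructing only up to universal vertices at the end, since isolated vertices of $\overline{G}$ are invisible in $L(\overline{G})$.
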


\textbf{Outline of the paper.} Section~\ref{sec:preliminaries} introduces Bell $k$-coloring graphs, gives small examples, and develops basic tools for understanding adjacency, including a description of edges that are realized by two vertices. In Section~\ref{sec:cliques}, we classify cliques in Bell coloring graphs and apply this to show that $K_4-e$ is not a Bell coloring graph and that $K_n-e$ is not an induced subgraph of any Bell coloring graph for $n\ge 6$. Section~\ref{sec:matchings} introduces a matching reconfiguration graph and shows that, for triangle-free graphs, it coincides with a Bell coloring graph; this yields realizations of all trees and all cycles as Bell coloring graphs. Section~\ref{sec:tree-reconstruction} proves that trees are reconstructible from their Bell
$3$-coloring graphs. Section~\ref{sec:reconstruction-multigraph} proves the multigraph reconstruction theorem: the Bell $n$-coloring multigraph determines $G$ up to universal vertices.

\section{Preliminaries}\label{sec:preliminaries}

\subsection{Notation and conventions} Let $\mathbb{N}=\{1, 2, 3, \dots\}$ denote the set of positive integers. For sets $X$ and $Y$, we use the notation $X\setminus Y=\{x\in X \ | \ x\notin Y\}$ for the set difference and $X\cup Y$ for set union; in the special case when $Y=\{v\}$ is a singleton, we also denote $X\setminus\{v\}$ by $X-v$ and $X\cup\{v\}$ by $X+v$. Given a graph $G$, we let $\overline{G}$ denote its complement. We write $G\sqcup H$ for the disjoint union of graphs $G$ and $H$. 

\begin{defn}\label{def:stable-partition}
A \emph{stable $k$-partition} of a graph $G$ is a multiset $P=\{V_1,\dots,V_k\}$ of independent sets that partition $V(G)$, where we allow some of the parts $V_i$ to be empty. 
\end{defn}

Let $\cP_k(G)$ denote the set of all stable $k$-partitions of $G$. For $v\in V(G)$, we write
\[
P-v \colonequals \{V_i-v : 1\leq i\leq k\},
\]
viewed as a multiset of $k$ subsets.

A Bell $k$-coloring graph $\B_k(G)$ is the graph whose vertex set is $\cP_k(G)$. Two vertices $P_1, P_2 \in \cP_k(G)$ are adjacent if and only if $P_1-v=P_2-v$ for some $v\in V(G)$. 

Throughout, partitions $P, Q, R \in \cP_k(G)$ are treated as \emph{unordered multisets} of parts. We write $P \sim Q$ to denote adjacency in $\B_k(G)$. To specify the vertex responsible for the adjacency, we use the following notation:
\[
P \sim_v Q \iff P \sim Q \text{ and } P-v = Q-v.
\]
We say that a vertex $v\in V(G)$ is \emph{responsible} for an edge $PQ$ in $\B_k(G)$ if $P\sim_v Q$. Equivalently, the edge $PQ$ is \emph{realized} by $v$. The condition $P-v=Q-v$ implies that $P$ and $Q$ agree on the structure of the partition when restricted to $V(G)\setminus\{v\}$. 

We record the following useful fact, which we use several times in the paper: if $P_1\sim_v P_2$ and $P_2\sim_v P_3$, then $P_1$ and $P_3$ agree on $V(G)\setminus\{v\}$, so $P_1\sim_v P_3$.

\subsection{Example Bell coloring graphs} We begin with a few illustrative examples.

\begin{ex} Consider $\B_2(\overline{K}_2)$ where $V(\overline{K}_2)=\{1,2\}$. The stable 2-partitions are $R_1 = \{\{1,2\}, \emptyset\}$ and $R_2 = \{\{1\}, \{2\}\}$.
We have $R_1-1 = \{\{2\}, \emptyset\}$ and $R_2-1 = \{\emptyset, \{2\}\}$, so $R_1 \sim_1 R_2$. Similarly $R_1 \sim_2 R_2$. Thus $\B_2(\overline{K}_2) \cong K_2$.
\end{ex}

\begin{ex}\label{ex:K3}
Consider $G=K_3 \sqcup K_1$ on vertices $\{1,2,3,4\}$, where $\{1,2,3\}$ form the $K_3$ and the vertex $4$ is isolated. We examine $\B_3(G)$. Since $\{1,2,3\}$ is a clique, any stable 3-partition must place them in different parts. The vertex $4$ can join any part.
\begin{align*}
P_1 &= \{\{1,4\}, \{2\}, \{3\}\}, \\
P_2 &= \{\{1\}, \{2,4\}, \{3\}\}, \\
P_3 &= \{\{1\}, \{2\}, \{3,4\}\}.
\end{align*}
These are the only stable 3-partitions of $G$. We check adjacencies: $P_1-4 = P_2-4 = P_3-4 = \{\{1\}, \{2\}, \{3\}\}$. Thus, we have $P_1 \sim_4 P_2$, $P_2 \sim_4 P_3$, and $P_3 \sim_4 P_1$, so $\B_3(G) \cong K_3$.
\end{ex}

\begin{ex}\label{ex:P4}
Consider the path graph $G$ on vertices $\{1,2,3,4\}$ with edges 12, 23, 34. The stable $3$-partitions are:
\[
\begin{aligned}
R_1 &= \{\{1,3\}, \{2\}, \{4\}\}, & R_2 &= \{\{1,4\}, \{2\}, \{3\}\},\\
R_3 &= \{\{1\}, \{2,4\}, \{3\}\}, & R_4 &= \{\{1,3\}, \{2,4\}, \emptyset\}.
\end{aligned}
\]
A direct check shows that $R_1 \sim_1 R_2$, $R_2 \sim_4 R_3$, $R_3 \sim_1 R_4$, and $R_4 \sim_4 R_1$. The graph $\B_3(P_4)$ is the cycle $C_4$; see Figure~\ref{fig:B3_P4}.
\end{ex}

\begin{figure}[h]
    \centering
    \begin{tikzpicture}[
        partition/.style={rectangle, draw, rounded corners, thick, inner sep=5pt, align=center, fill=white},
        edge label/.style={midway, fill=white, font=\small, inner sep=1.5pt},
        scale=1.1
    ]

    \node[partition] (R1) at (0, 2) {$\{1,3\}, \{2\}, \{4\}$};
    
    \node[partition] (R2) at (5, 2) {$\{1,4\}, \{2\}, \{3\}$};
    
    \node[partition] (R3) at (5, 0) {$\{1\}, \{2,4\}, \{3\}$};

    \node[partition] (R4) at (0, 0) {$\{1,3\}, \{2,4\}, \emptyset$};

    \draw[thick] (R1) -- (R2) node[edge label] {$1$};

    \draw[thick] (R2) -- (R3) node[edge label] {$4$};

    \draw[thick] (R3) -- (R4) node[edge label] {$1, 3$};

    \draw[thick] (R4) -- (R1) node[edge label] {$2, 4$};

    \end{tikzpicture}
    \caption{The Bell $3$-coloring graph of the path $P_4$ is the cycle graph $C_4$. Each edge incident to $R_4=\{\{1,3\}, \{2, 4\}, \emptyset\}$ is doubly realized: it can be obtained by moving either of two nonadjacent vertices (using the empty part in $R_4$), so the same pair of partitions is adjacent via two distinct vertex moves.}
    \label{fig:B3_P4}
\end{figure}
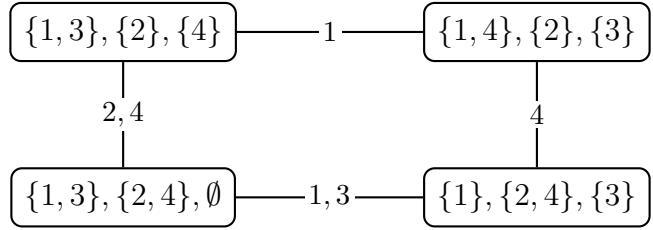

\subsection{Multiple edges}

As we see in Figure~\ref{fig:B3_P4}, an interesting structural feature of Bell coloring graphs is that two partitions might be adjacent via more than one vertex simultaneously. The following lemma characterizes this phenomenon.

\begin{lemma}[Characterization of doubly realized edges]\label{lem:double_edge}
Let $P, Q \in \cP_k(G)$ be distinct partitions. If $P \sim_v Q$ and $P \sim_w Q$ for distinct vertices $v, w \in V(G)$, then $v$ and $w$ are nonadjacent in $G$. Furthermore, $P$ and $Q$ decompose as:
\begin{align*}
    P=\{\{v,w\}, \emptyset\} \cup R, \quad \text{ and } \quad Q=\{\{v\}, \{w\}\} \cup R
\end{align*}
for some multiset $R$ consisting of $k-2$ independent sets.
\end{lemma}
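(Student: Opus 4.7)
The plan is to exploit the rigidity imposed by having two distinct vertices $v, w$ simultaneously witness the same adjacency. Since $P - v = Q - v$ and $P - w = Q - w$, the partitions $P$ and $Q$ induce the same multiset of parts on $V(G) \setminus \{v\}$ and on $V(G) \setminus \{w\}$, so they can only differ in how they place $v$ and $w$. I will case-split on whether $v$ and $w$ share a part in each of $P$ and $Q$.

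First, I will rule out the two ``compatible'' configurations. If $v, w$ lie in distinct parts on \emph{both} sides, letting $A, B$ (resp.\ $A', B'$) be the parts of $P$ (resp.\ $Q$) containing $v, w$, then the part of $P - v$ containing $w$ is just $B$, which must match the corresponding part $B'$ of $Q - v$; so $B = B'$, and symmetrically from $P - w = Q - w$ I get $A = A'$, and then the leftover parts match, giving the contradiction $P = Q$. If instead $v, w$ share a part on \emph{both} sides, call them $A$ and $A'$: identifying the parts containing $w$ in $P - v$ and $Q - v$ yields $A - v = A' - v$, hence $A = A'$, and again the remaining parts agree, forcing $P = Q$. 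So, up to relabeling, I may assume $\{v, w\} \subseteq A$ for a single part $A$ of $P$, while in $Q$ the vertices sit in distinct parts $A' \ni v$ and $B' \ni w$.

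In the remaining case, the part of $P - v$ containing $w$ is $A - v$, which must match the part $B'$ of $Q - v$ containing $w$; so $A - v = B'$. Symmetrically $A - w = A'$. Since $A'$ and $B'$ are distinct parts of the partition $Q$ they are disjoint, and
\[
\emptyset = A' \cap B' = (A - w) \cap (A - v) = A \setminus \{v, w\},
\]
hence $A = \{v, w\}$. The set $A$ is a part of a stable partition, so $v$ and $w$ are nonadjacent in $G$, and moreover $A' = \{v\}$ and $B' = \{w\}$. Comparing the remaining parts in the equation $P - v = Q - v$ then forces an empty part on the $P$ side to match $A' - v = \emptyset$ on the $Q$ side, while the leftover $k - 2$ parts agree as a common multiset $R$, yielding $P = \{\{v, w\}, \emptyset\} \cup R$ and $Q = \{\{v\}, \{w\}\} \cup R$ as claimed.

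I expect the main obstacle to be the multiset bookkeeping: since parts can repeat or be empty and partitions are unordered, identifying \emph{which} part on each side is forced to match which requires care. The guiding observation throughout is that for any $u \neq v$ the part of $P - v$ containing $u$ is uniquely determined, so distinguished vertices such as $v$ and $w$ anchor the matching and propagate rigidity to the remaining parts.
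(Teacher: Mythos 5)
Your proof is correct, and it takes essentially the same elementary route as the paper: both arguments use the two restriction equalities $P-v=Q-v$ and $P-w=Q-w$ together with disjointness of parts to force the part containing $v,w$ to be exactly $\{v,w\}$ on one side and the two singletons plus an empty part on the other. The only difference is organizational — you case-split on whether $v$ and $w$ share a part in $P$ and in $Q$ and use the intersection $(A-w)\cap(A-v)=\emptyset$, whereas the paper first parametrizes $Q$ as the $v$-move of $P$ and then locates $w$ — and, like the paper, your ``up to relabeling'' swap of $P$ and $Q$ matches the lemma's intended reading.
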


\begin{proof}
Since $P\sim_v Q$, the partitions coincide when $v$ is removed. Thus, there exist sets $V_1, V_2 \subseteq V(G)\setminus\{v\}$ and a collection of parts $R'$ such that:
\begin{align*}
    P &= \{V_1 \cup \{v\}, V_2\} \cup R', \\
    Q &= \{V_1, V_2 \cup \{v\}\} \cup R'.
\end{align*}
Since $P\neq Q$, we must have $V_1 \neq V_2$. Now consider the condition $P \sim_w Q$. If $w$ were in some part $C \in R'$, then $P-w=Q-w$ would imply $\{V_1 \cup \{v\}, V_2\} = \{V_1, V_2 \cup \{v\}\}$, forcing $V_1=V_2$, a contradiction. Thus $w \in V_1$ or $w \in V_2$. Without loss of generality, assume $w \in V_1$.

Write $V_1 = V_1' \cup \{w\}$. Then:
\begin{align*}
    P-w &= \{V_1' \cup \{v\}, V_2\} \cup R', \\
    Q-w &= \{V_1', V_2 \cup \{v\}\} \cup R'.
\end{align*}
Equality forces the multisets to be the same: $\{V_1' \cup \{v\}, V_2\} = \{V_1', V_2 \cup \{v\}\}$. This implies $V_1'=V_2$. For the parts of $P$ to be disjoint, we must have $(V_1' \cup \{w, v\}) \cap V_2 = \emptyset$. Since $V_1'=V_2$, this requires $V_1'=\emptyset$, so $V_2=\emptyset$. Thus $P=\{\{v,w\}, \emptyset\} \cup R'$ and $Q=\{\{v\}, \{w\}\} \cup R'$. Finally, since $\{v,w\}$ is a part in $P$, $v$ and $w$ must be nonadjacent in $G$.
\end{proof}

We encountered this double–edge phenomenon in the smallest case
$\B_2(\overline{K}_2)\cong K_2$: there are two partitions
$P=\{\{u,v\},\emptyset\}$ and $Q=\{\{u\},\{v\}\}$, and both $u$ and $v$
are responsible for the adjacency $P\sim Q$. Thus, the unique edge of
$\B_2(\overline{K}_2)$ acts like a doubly realized edge, corresponding exactly to the
special case of Lemma~\ref{lem:double_edge} with $R=\emptyset$.

\begin{remark}\label{rem:double-edge-responsible}
Let $P, Q\in\cP_k(G)$ be distinct partitions. Then $PQ$ is a doubly realized edge with distinct $v,w\in V(G)$ such that $P\sim_v Q$ and $P\sim_w Q$ if and only if there is a collection $R$ of $k-2$ parts such that, up to relabeling of parts,
\[
P=\{\{v,w\},\emptyset\}\cup R
\quad\text{and}\quad
Q=\{\{v\},\{w\}\}\cup R.
\]
In particular, at most two vertices of $G$ can be responsible for a single edge $PQ$ in $\B_k(G)$. Indeed, if three distinct vertices $v,w,x$ were all responsible for $PQ$, then applying Lemma~\ref{lem:double_edge} to the pairs $(v,w)$ and $(v,x)$ would force two different decompositions of $P$ of the above form, contradicting the uniqueness of the part containing $v$. Equivalently, for every edge $PQ$ in $\B_k(G)$, the set
$\{u\in V(G): P\sim_u Q\}$ has size at most $2$.
\end{remark}

\section{Cliques in Bell coloring graphs}\label{sec:cliques}

The relationship between colorings that give rise to cliques in a $k$-coloring graph is straightforward: a copy of $K_m$ appears in $\mathcal{C}_k(G)$ if and only if it arises by recoloring a single vertex in $m$ distinct ways \cite{BFHRS16}*{Lemma 8}. In Subsection~\ref{sec:clique-classification}, we provide several definitions and lemmas leading up to Theorem~\ref{thm:clique_classification}, which enumerates the types of cliques present in Bell coloring graphs. In contrast to standard coloring graphs, small cliques in $\B_k(G)$ need not be realized by a single anchor vertex. However, for any clique $K_m$ with $m\ge 5$ in a Bell coloring graph, there is a vertex $u\in V(G)$ such that $P_i\sim_u P_j$ for every edge $P_iP_j$ of the clique. In Subsection~\ref{sec:forbidden}, we use our classification result to prove that certain nearly complete graphs cannot appear as Bell coloring graphs or as induced subgraphs of Bell coloring graphs.

\subsection{Definitions and clique classification theorem}\label{sec:clique-classification}

To classify the cliques of $\B_k(G)$, we group them according to which vertices of $G$ realize their edges. First, we differentiate between cliques in which each edge is realized by the same vertex and all other cliques. We call the former an $S$-clique since it is realized by a single vertex, and the latter a $T$-clique, since the smallest such example is realized by three distinct vertices.

\begin{defn}\label{def:S-vs-T}
An {\em $S$-clique} in the Bell coloring graph $\B_k(G)$ is a clique $\mathcal{K}=\{P_1, \dots, P_m\}$ that admits an \emph{anchor} vertex $u$ such that $P_i - u = P_j - u$ for all $1\leq i<j\leq m$; equivalently, for every edge $P_iP_j$ of the clique we have $P_i\sim_u P_j$. 

A {\em $T$-clique} in a Bell coloring graph is any clique that is not an $S$-clique.
\end{defn}

In standard coloring graphs, an anchor vertex for a clique is solely responsible for each edge. By contrast, Figure~\ref{fig:hybrid_triangle} depicts an $S$-triangle in which one of the edges has two vertices responsible for it.

\begin{figure}[h]
    \centering
    \begin{tikzpicture}[
        vertex/.style={circle, draw, minimum size=7mm, inner sep=0pt, font=\small},
        edge label/.style={midway, fill=white, font=\footnotesize, inner sep=1.5pt},
        part label/.style={font=\scriptsize, color=black} 
    ]

    \begin{scope}[local bounding box=hybrid]
        \node[vertex] (P1) at (90:2.0) {$P_1$};
        \node[vertex] (P2) at (210:2.0) {$P_2$};
        \node[vertex] (P3) at (330:2.0) {$P_3$};
        \node[part label, above=0.1cm of P1] {$\{\{v,w\}, \{x,y\}\}$};
        \node[part label, below left=0.1cm of P2] {$\{\{v\}, \{w\}, \{x,y\}\}$};
        \node[part label, below right=0.1cm of P3] {$\{\{w\}, \{v,x,y\}\}$};
        \draw (P1) -- (P2) node[edge label] {$v, w$};
        \draw (P2) -- (P3) node[edge label] {$v$};
        \draw (P3) -- (P1) node[edge label] {$v$};
        \node at (0,-2) {Hybrid triangle};
    \end{scope}

    \end{tikzpicture}
    \caption{An example of an $S$-clique in $\B_3(G)$ (where $G$ is $K_{1, 2}$ with center $w$ plus an isolated vertex $v$) that exhibits hybrid behavior. The vertex $v$ is responsible for all three edges (making it an $S$-triangle), but both $v$ and $w$ are responsible for the edge $P_1P_2$, making it a doubly realized edge.}
    \label{fig:hybrid_triangle}
\end{figure}
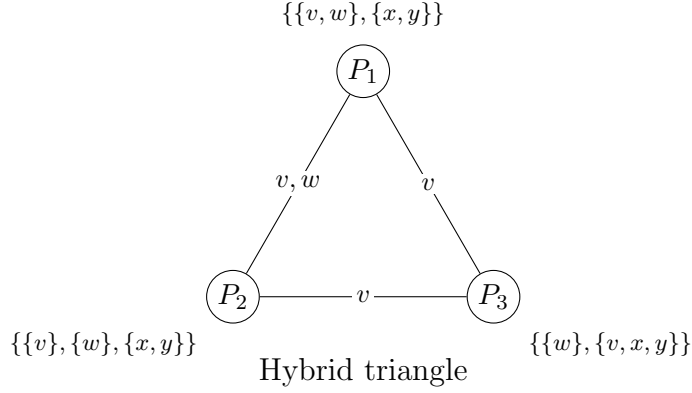

We present some lemmas that distinguish $S$-cliques from $T$-cliques before enumerating special types of $T$-cliques. The following lemma establishes that although an $S$-clique may have doubly realized edges, there is a unique vertex that realizes all edges whenever the clique has at least three vertices.

\begin{lemma}[Anchor uniqueness]\label{lem:anchor_uniqueness}
If $\mathcal{K}$ is a clique of order at least $3$ in the Bell coloring graph of $G$
and there exists a vertex $a\in V(G)$ such that every edge of $\mathcal{K}$ is realized by $a$, then $a$ is unique. That is, every $S$-clique of order at least $3$ has a well-defined anchor vertex.
\end{lemma}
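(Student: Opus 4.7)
The plan is to argue by contradiction using the characterization of doubly realized edges in Lemma~\ref{lem:double_edge}. Suppose, for the sake of contradiction, that two distinct vertices $a, b \in V(G)$ each realize every edge of $\mathcal{K} = \{P_1, \dots, P_m\}$. Then for every pair $i \ne j$, the edge $P_i P_j$ is simultaneously realized by $a$ and by $b$, so Lemma~\ref{lem:double_edge} applies to the pair $(P_i, P_j)$.

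By that lemma, for each edge $P_i P_j$ one endpoint must contain $\{a, b\}$ as a single part (call such a partition \emph{type X}) and the other must contain $\{a\}$ and $\{b\}$ as two distinct singleton parts (\emph{type Y}). These two types are mutually exclusive, since $a$ and $b$ lie in the same part in a type-X partition and in different parts in a type-Y partition. Consequently, each $P_i$ in the clique has a well-defined type in $\{X, Y\}$, and any two adjacent partitions in $\mathcal{K}$ must have opposite types.

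Since $|\mathcal{K}| \ge 3$, the pigeonhole principle produces two partitions $P_i, P_j$ of the same type. These are adjacent in the clique, contradicting the alternation condition just established. I do not expect a substantive obstacle here: Lemma~\ref{lem:double_edge} converts the hypothesis "both $a$ and $b$ anchor $\mathcal{K}$" into a proper $2$-coloring of the vertices of $\mathcal{K}$ by the labels $\{X, Y\}$ in which every edge is bichromatic, and no triangle admits such a $2$-coloring.
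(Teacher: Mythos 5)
Your proof is correct and takes essentially the same route as the paper: both arguments assume two distinct anchors $a,b$, observe that then every edge of $\mathcal{K}$ is doubly realized, and apply Lemma~\ref{lem:double_edge} to reach a contradiction among three clique vertices. The paper closes by noting that the common restriction $R$ is determined by $P_1$, so applying the lemma to $P_1P_2$ and $P_1P_3$ forces $P_2=P_3$, while your closing step is a parity observation (the mutually exclusive types $X$ and $Y$ would properly $2$-color a triangle, which is impossible); both finishes are valid and of comparable length.
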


\begin{proof}
Suppose, to the contrary, that an $S$-clique $\mathcal{K}$ with $|\mathcal{K}|\geq 3$ has two distinct anchors $a$ and $b$. Let $P_1, P_2, P_3 \in \mathcal{K}$ be distinct partitions.
Since the edge $P_1P_2$ is realized by both $a$ and $b$, Lemma~\ref{lem:double_edge} implies that:
\[
P_1 = \{\{a, b\}, \emptyset\} \cup R
\qquad\text{and}\qquad
P_2 = \{\{a\}, \{b\}\} \cup R
\]
for some common restriction $R$ on $V(G)\setminus\{a,b\}$. Applying the same argument to $P_1$ and $P_3$ forces $P_3 = \{\{a\}, \{b\}\} \cup R$, so $P_3 = P_2$, a contradiction.
\end{proof}

We show that every $T$-triangle has three distinct vertices responsible for its edges.

\begin{lemma}\label{lem:triangle_realizers}
Let $\Delta=\{P_1,P_2,P_3\}$ be a triangle in $\B_k(G)$, and for each $1\le i<j\le 3$, let
\[
V_{ij} \colonequals \{v\in V(G) : P_i \sim_v P_j\}.
\]
Then exactly one of the following holds:
\begin{enumerate}
    \item\label{item:S-triangle} $V_{12}\cap V_{23}\cap V_{31}\neq\emptyset$, in which case $\Delta$ is an $S$-triangle.
    \item\label{item:T-triangle} The sets $V_{12}, V_{23}, V_{31}$ are pairwise disjoint singletons, in which case $\Delta$ is a $T$-triangle.
\end{enumerate}
\end{lemma}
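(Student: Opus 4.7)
The plan is to use the transitivity observation stated right before Section~2.2 (namely, that $P_i\sim_v P_j$ and $P_j\sim_v P_\ell$ force $P_i\sim_v P_\ell$) to establish a clean dichotomy, and then to rule out doubly realized edges in the non-$S$ case by exploiting Lemma~\ref{lem:double_edge}. Throughout, I may assume $|V_{ij}|\in\{1,2\}$: the lower bound holds because $P_iP_j$ is an edge of $\Delta$, and the upper bound is Remark~\ref{rem:double-edge-responsible}.

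First I would establish the dichotomy. Suppose some vertex $v$ lies in two of the three sets, say $v\in V_{12}\cap V_{23}$. Then $P_1\sim_v P_2$ and $P_2\sim_v P_3$; since $P_1\ne P_3$, the transitivity observation gives $P_1\sim_v P_3$, hence $v\in V_{31}$. So either $V_{12}\cap V_{23}\cap V_{31}\ne\emptyset$, or the three sets are pairwise disjoint. In the first case, any common vertex serves as an anchor in the sense of Definition~\ref{def:S-vs-T}, so $\Delta$ is an $S$-triangle, yielding conclusion \eqref{item:S-triangle}. In the second case, no vertex realizes all three edges, so $\Delta$ is automatically a $T$-triangle; it remains to show that each $V_{ij}$ is a singleton.

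For that singleton claim, I would argue by contradiction. Assume the sets are pairwise disjoint and $|V_{12}|=2$, say $V_{12}=\{v,w\}$. Lemma~\ref{lem:double_edge} then gives the explicit decomposition $P_1=\{\{v,w\},\emptyset\}\cup R$ and $P_2=\{\{v\},\{w\}\}\cup R$ for some multiset $R$ of $k-2$ independent sets. Choose $x\in V_{13}$ and $y\in V_{23}$; by pairwise disjointness, $x,y\notin\{v,w\}$ and $x\ne y$, so both $x$ and $y$ lie in parts of $R$. From $P_3-x=P_1-x$ and $P_3-y=P_2-y$, removing the other vertex from each side yields
\[
P_1-\{x,y\}=P_3-\{x,y\}=P_2-\{x,y\}.
\]
Since $x,y$ only touch parts of $R$, this reduces to $\{\{v,w\},\emptyset\}\cup R'=\{\{v\},\{w\}\}\cup R'$ for the common restriction $R'$ of $R$ to $V(G)\setminus\{x,y\}$. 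Multiset cancellation then forces $\{\{v,w\},\emptyset\}=\{\{v\},\{w\}\}$, which is false. Hence $|V_{12}|=1$, and by symmetry $|V_{23}|=|V_{31}|=1$, yielding conclusion \eqref{item:T-triangle}.

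The main obstacle is the bookkeeping in the last step: one must verify that $x$ and $y$ genuinely lie inside parts of the common remainder $R$ (so that removing them leaves the two ``signature'' parts $\{\{v,w\},\emptyset\}$ and $\{\{v\},\{w\}\}$ undisturbed) and then invoke multiset cancellation correctly. The pairwise-disjointness hypothesis is exactly what guarantees $x,y\notin\{v,w\}$ and $x\ne y$, so the cancellation argument goes through cleanly.
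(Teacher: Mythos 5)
Your proof is correct and follows essentially the same route as the paper: the transitivity observation yields the anchor/pairwise-disjoint dichotomy, and Lemma~\ref{lem:double_edge} combined with pairwise disjointness rules out a doubly realized edge in the non-anchored case. The only cosmetic difference is how the final contradiction is extracted—the paper argues that $P_3$ would have to both keep the pair together and split it, whereas you delete $x,y$ and cancel the common remainder to force $\{\{v,w\},\emptyset\}=\{\{v\},\{w\}\}$—and both versions are valid.
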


\begin{proof}
If $P_1\sim_v P_2$ and $P_2\sim_v P_3$, then $P_1\sim_v P_3$. Hence if $u\in V_{12}\cap V_{23}$, transitivity implies $u\in V_{31}$, so $u$ realizes every edge of $\Delta$ and is a common anchor. In this case, $\Delta$ is an $S$-triangle, and we are in alternative~\eqref{item:S-triangle}.

Now assume that $\Delta$ is not an $S$-triangle. By definition, $\Delta$ is then a $T$-triangle, and alternative~\eqref{item:S-triangle} fails. No vertex can lie in two of the sets $V_{ij}$. Indeed, if $u\in V_{12}\cap V_{23}$, the previous paragraph would show that $u$ is a common anchor. Thus $V_{12},V_{23},V_{31}$ are pairwise disjoint. Since each $P_iP_j$ is an edge, each $V_{ij}$ is nonempty, and by Lemma~\ref{lem:double_edge} we have $|V_{ij}|\in\{1,2\}$.

Suppose for contradiction that $\Delta$ is a $T$-triangle but some edge is doubly realized, say $|V_{12}|=2$ with $V_{12}=\{a,b\}$. By Lemma~\ref{lem:double_edge}, we may assume
\[
P_1 = \{\{a,b\}, \emptyset\} \cup R, \qquad
P_2 = \{\{a\}, \{b\}\} \cup R
\]
for some multiset $R$. Let $v\in V_{13}$ and $w\in V_{23}$. Since the sets $V_{12},V_{23},V_{31}$ are pairwise disjoint, we have $v,w\notin\{a,b\}$.
Because $v \notin \{a,b\}$, the adjacency $P_1 \sim_v P_3$ forces $P_3$ to retain the part $\{a,b\}$. Similarly, because $w \notin \{a,b\}$, the adjacency $P_2 \sim_w P_3$ forces $P_3$ to retain the singletons $\{a\}$ and $\{b\}$. This is impossible: $P_3$ cannot simultaneously contain $\{a,b\}$ and $\{a\},\{b\}$.

Thus, in a triangle with no common anchor (that is, a $T$-triangle), no edge can be doubly realized. Since each $V_{ij}$ is nonempty and the three sets are pairwise disjoint, it follows that $V_{12},V_{23},V_{31}$ are pairwise disjoint singletons. This is exactly alternative~\eqref{item:T-triangle}.
\end{proof}

\begin{corollary}\label{cor:triangle_double_edge}
If $\{P_1,P_2,P_3\}$ is a triangle in $\B_k(G)$ with a doubly realized edge, then the triangle is an $S$-clique. Equivalently, every $T$-triangle has all three edges singly realized by distinct vertices.
\end{corollary}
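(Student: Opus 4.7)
The plan is to derive this corollary directly from Lemma~\ref{lem:triangle_realizers}, treating it as an almost immediate restatement of the dichotomy proved there. The lemma shows that any triangle $\{P_1,P_2,P_3\}$ in $\B_k(G)$ falls into exactly one of two mutually exclusive cases: either it is an $S$-triangle with a common anchor in $V_{12}\cap V_{23}\cap V_{31}$, or it is a $T$-triangle, in which case $V_{12},V_{23},V_{31}$ are pairwise disjoint singletons.

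Given this, I would argue by contraposition (or directly by elimination). Suppose $\{P_1,P_2,P_3\}$ is a triangle with a doubly realized edge, say $|V_{ij}|=2$ for some pair $i<j$. If the triangle were a $T$-triangle, then alternative~\eqref{item:T-triangle} of Lemma~\ref{lem:triangle_realizers} would force each $V_{ij}$ to be a singleton, contradicting $|V_{ij}|=2$. Hence the triangle must fall under alternative~\eqref{item:S-triangle}, meaning it is an $S$-triangle and, in particular, an $S$-clique. The equivalent reformulation is then immediate: if $\{P_1,P_2,P_3\}$ is a $T$-triangle, alternative~\eqref{item:T-triangle} applies, so $V_{12},V_{23},V_{31}$ are pairwise disjoint singletons, giving exactly three distinct vertices each singly responsible for one edge.

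Since Lemma~\ref{lem:triangle_realizers} does all the structural work, there is essentially no obstacle here; the corollary amounts to reading off the contrapositive of the $T$-triangle half of the dichotomy. I would keep the write-up to a few lines, explicitly invoking Lemma~\ref{lem:triangle_realizers} and noting that the two alternatives cover all triangles.
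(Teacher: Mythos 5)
Your proposal is correct and matches the paper's intent exactly: the corollary is stated immediately after Lemma~\ref{lem:triangle_realizers} with no separate proof, precisely because it is the contrapositive reading of the dichotomy (a doubly realized edge is incompatible with alternative~\eqref{item:T-triangle}, so alternative~\eqref{item:S-triangle} must hold, and conversely every $T$-triangle has pairwise disjoint singleton realizer sets). Nothing is missing; your few-line write-up is the same argument the paper leaves implicit.
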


Next, we define special types of small $T$-cliques. Figures~\ref{fig:triangle_types}~and~\ref{fig:k4_structures} depict these types of cliques in general. 

\begin{defn}\label{def:T-types}
    A {\em cyclic $T$-triangle} is a 3-clique $\{P_1,P_2,P_3\}$ with edges realized by $v_1,v_2,v_3$ such that $P_i$ contains $\{v_i\}$ as a singleton part for $i=1,2,3$.

    A {\em radial $T$-triangle} is a 3-clique $\{P_1,P_2,P_3\}$ with edges realized by $v_1,v_2,v_3$ such that $P_1$ contains $\{v_1,v_2,v_3\}$ as a part.
    
    A {\em fused $T$-tetrahedron} is a 4-clique consisting of four $T$-triangles.

    A {\em split $T$-tetrahedron} is a 4-clique consisting of three $S$-triangles and one $T$-triangle. 
\end{defn}

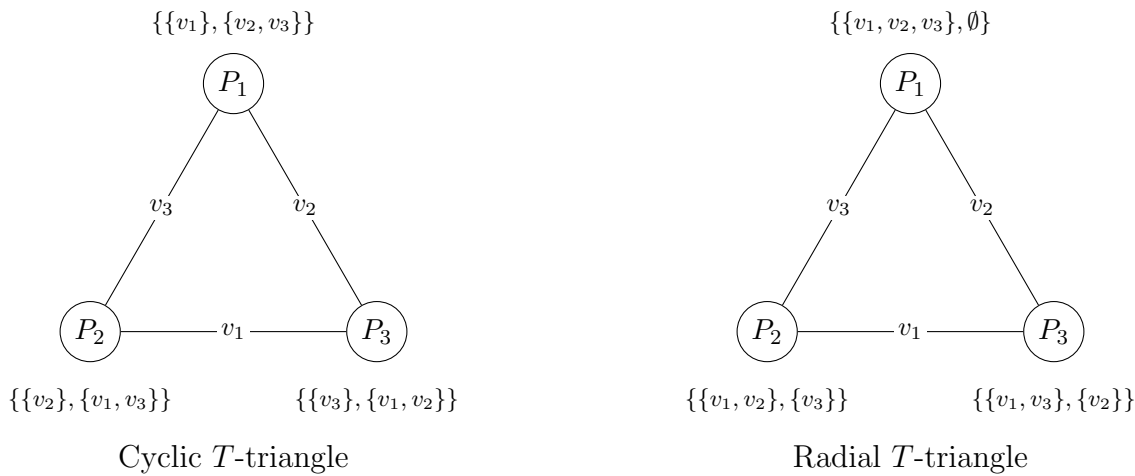
\begin{figure}[h]
    \centering
    \begin{tikzpicture}[
        vertex/.style={circle, draw, minimum size=8mm, inner sep=0pt, font=\small},
        edge label/.style={midway, fill=white, font=\footnotesize, inner sep=1.5pt},
        part label/.style={font=\scriptsize, color=black, align=center}
    ]

    \begin{scope}[local bounding box=cyclic]
        \node[vertex] (C1) at (90:2.2) {$P_1$};
        \node[vertex] (C2) at (210:2.2) {$P_2$};
        \node[vertex] (C3) at (330:2.2) {$P_3$};
        \node[part label, above=0.1cm of C1] {$\{\{v_1\}, \{v_2, v_3\}\}$};
        \node[part label, below=0.2cm of C2] {$\{\{v_2\}, \{v_1, v_3\}\}$};
        \node[part label, below=0.2cm of C3] {$\{\{v_3\}, \{v_1, v_2\}\}$};
        
        \draw (C1) -- (C2) node[edge label] {$v_3$};
        \draw (C2) -- (C3) node[edge label] {$v_1$};
        \draw (C3) -- (C1) node[edge label] {$v_2$};
        
        \node at (0, -2.8) {Cyclic $T$-triangle};
    \end{scope}

    \begin{scope}[xshift=9.0cm, local bounding box=radial]
        \node[vertex] (M1) at (90:2.2) {$P_1$};
        \node[vertex] (M2) at (210:2.2) {$P_2$};
        \node[vertex] (M3) at (330:2.2) {$P_3$};
        
        \node[part label, above=0.1cm of M1] {$\{\{v_1, v_2, v_3\}, \emptyset\}$};
        \node[part label, below=0.2cm of M2] {$\{\{v_1, v_2\}, \{v_3\}\}$};
        \node[part label, below=0.2cm of M3] {$\{\{v_1, v_3\}, \{v_2\}\}$};
        
        \draw (M1) -- (M2) node[edge label] {$v_3$};
        \draw (M2) -- (M3) node[edge label] {$v_1$};
        \draw (M3) -- (M1) node[edge label] {$v_2$};
        
        \node at (0, -2.8) {Radial $T$-triangle};
    \end{scope}
    
    \end{tikzpicture}
    \caption{Examples of the two types of $T$-triangles from Definition~\ref{def:T-types}. In the \emph{cyclic} type, the partitions cycle the singleton vertex. In the \emph{radial} type, $P_1$ contains the triple $\{v_1, v_2, v_3\}$ while $P_2$ and $P_3$ break it into a pair and a singleton. %The common remainder partition $R$ is omitted for clarity. 
    Lemma~\ref{lem:triple_vertex} shows these are the only types of $T$-triangles.}
    \label{fig:triangle_types} 
\end{figure}

\begin{figure}[h]
    \centering
    \begin{tikzpicture}[
        vertex/.style={circle, draw, minimum size=7mm, inner sep=0pt, font=\small},
        edge label/.style={midway, fill=white, font=\footnotesize, inner sep=1.5pt},
        spoke label/.style={midway, fill=white, font=\scriptsize, inner sep=0.5pt}
    ]

    \begin{scope}[local bounding box=split]
        \node[vertex] (C) at (0,0) {$P_0$};
        \node[vertex] (O1) at (90:2.2) {$P_1$};
        \node[vertex] (O2) at (210:2.2) {$P_2$};
        \node[vertex] (O3) at (330:2.2) {$P_3$};
        
        \draw (O1) -- (O2) node[edge label] {$v_3$};
        \draw (O2) -- (O3) node[edge label] {$v_1$};
        \draw (O3) -- (O1) node[edge label] {$v_2$};
        \draw (C) -- (O1) node[spoke label] {$v_2, v_3$};
        \draw (C) -- (O2) node[spoke label] {$v_1, v_3$};
        \draw (C) -- (O3) node[spoke label] {$v_1, v_2$};
        
        \node at (0,-2) {Split $T$-tetrahedron}; 
    \end{scope}
    
    \begin{scope}[xshift=7.0cm, local bounding box=fused]
        \node[vertex] (F) at (0,0) {$P_4$};
        \node[vertex] (R1) at (90:2.2) {$P_1$};
        \node[vertex] (R2) at (210:2.2) {$P_2$};
        \node[vertex] (R3) at (330:2.2) {$P_3$};
        
        \draw (R1) -- (R2) node[edge label] {$v_3$};
        \draw (R2) -- (R3) node[edge label] {$v_1$};
        \draw (R3) -- (R1) node[edge label] {$v_2$};
        \draw (F) -- (R1) node[spoke label] {$v_1$};
        \draw (F) -- (R2) node[spoke label] {$v_2$};
        \draw (F) -- (R3) node[spoke label] {$v_3$};
        
        \node at (0,-2) {Fused $T$-tetrahedron}; 
    \end{scope}
    \end{tikzpicture}
    \caption{Examples of the two $T$-tetrahedra from Definition~\ref{def:T-types}, corresponding to the tetrahedra in Figure~\ref{fig:K5-minus-edge} depicting $\B_3(\overline{K}_3)$. The \emph{split tetrahedron} (left) is induced by the all-singleton partition $P_0$ and the cyclic $T$-triangle $\{P_1, P_2, P_3\}$, while the \emph{fused tetrahedron}  (right) is induced by the triple partition $P_4$ and the triangle $\{P_1, P_2, P_3\}$. Theorem~\ref{thm:clique_classification}  shows these are the only types of $T$-tetrahedra.}
    \label{fig:k4_structures}
\end{figure}
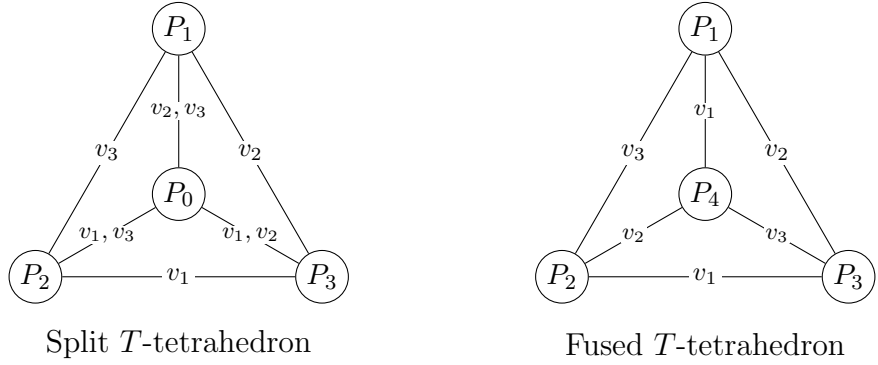

Figure~\ref{fig:K5-minus-edge} shows a Bell coloring graph $\B_3(\overline{K}_3)$ that exhibits all five types of cliques described in Definitions~\ref{def:S-vs-T}~and~\ref{def:T-types}. The vertices of the Bell coloring graph are the five 3-partitions of the set $\{v_1,v_2,v_3\}$ labeled as follows:
\begin{align*}
    P_1 &= \{\{v_1\}, \{v_2, v_3\}, \emptyset\},\qquad P_0 = \{\{v_1\}, \{v_2\}, \{v_3\}\},  \\
    P_2 &= \{\{v_2\}, \{v_1, v_3\}, \emptyset\}, \qquad
    P_4 = \{\{v_1, v_2, v_3\}, \emptyset, \emptyset\}, \\
    P_3 &= \{\{v_3\}, \{v_1, v_2\}, \emptyset\}.
\end{align*}
In this graph, $P_0P_i$ for $i=1,2,3$ is an edge, $P_4P_i$ for $i=1,2,3$ is an edge,  $P_1,P_2,P_3$ are pairwise adjacent, and $P_0P_4$ is {\em not} an edge. 

\begin{figure}[h]
\centering
\begin{tikzpicture}[
    vertex/.style={circle, draw, fill=white, minimum size=8mm, inner sep=0pt, font=\small},
    edge label/.style={midway, fill=white, font=\footnotesize, inner sep=1.5pt},
    part label/.style={font=\scriptsize, color=black, align=center},
    x={(-0.6cm, -0.4cm)}, y={(1cm, 0cm)}, z={(0cm, 1cm)},
    scale=1.2
]

\coordinate (P1) at (0, -2, 0);
\coordinate (P2) at (2, 1, 0);
\coordinate (P3) at (-2, 1, 0);

\coordinate (P4) at (0, 0, 3);

\coordinate (P0) at (0, 0, -3);

\draw (P1) -- (P2) node[edge label, pos=0.53] {$v_3$};
\draw (P0) -- (P1) node[edge label, pos=0.60] {$v_2, v_3$};
\draw (P0) -- (P2) node[edge label, pos=0.55] {$v_1, v_3$};
\draw (P0) -- (P3) node[edge label, pos=0.49] {$v_1, v_2$};

\draw (P1) -- (P3) node[edge label, pos=0.55] {$v_2$}; 
\draw (P2) -- (P3) node[edge label] {$v_1$}; 
\draw (P4) -- (P1) node[edge label, pos=0.5] {$v_1$};
\draw (P4) -- (P2) node[edge label, pos=0.4] {$v_2$};
\draw (P4) -- (P3) node[edge label, pos=0.6] {$v_3$};

\node[vertex] at (P1) {$P_1$};
\node[part label, below left=0.1cm of P1] {};

\node[vertex] at (P2) {$P_2$};
\node[part label, below right=0.1cm of P2] {};

\node[vertex] at (P3) {$P_3$};
\node[part label, above=0.1cm of P3] {};

\node[vertex] at (P4) {$P_4$};
\node[above=0.4cm of P4] {$\{\{v_1, v_2, v_3\},\emptyset,\emptyset\}$};

\node[vertex] at (P0) {$P_0$};
\node[below=0.4cm of P0] {$\{\{v_1\}, \{v_2\}, \{v_3\}\}$};

\end{tikzpicture}
\caption{Visualization of $K_5-e \cong \B_3(\overline{K}_3)$ as two tetrahedra $\{P_1, P_2, P_3, P_4\}$ and $\{P_1, P_2, P_3, P_0\}$ sharing the common base triangle $\{P_1, P_2, P_3\}$. The partitions $P_0$ and $P_4$ are the only nonadjacent pair of vertices.}
\label{fig:K5-minus-edge}
\end{figure}
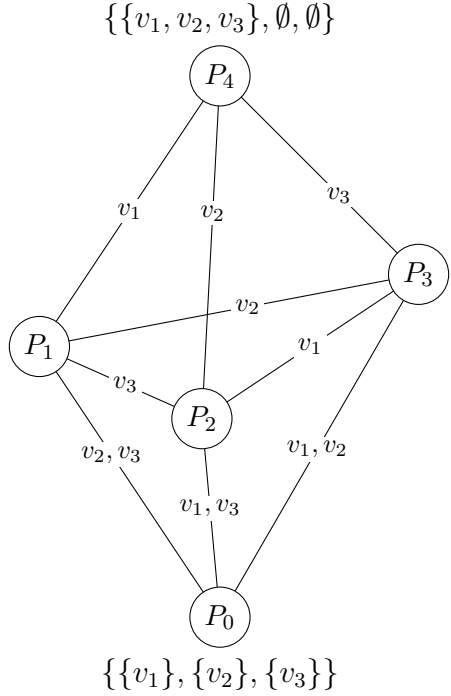

Observe that $\B_3(\overline{K}_3)$ contains seven triangles and two tetrahedra. The three triangles that include $P_0$ are $S$-triangles, each with two doubly realized edges. The three triangles that include $P_4$ are radial $T$-triangles. The triangle induced by $\{P_1,P_2,P_3\}$ is a cyclic $T$-triangle. Partitions $\{P_1,P_2,P_3,P_4\}$ induce a fused $T$-tetrahedron, and partitions $\{P_0,P_1,P_2,P_3\}$ induce a split $T$-tetrahedron. 

We now show that the particular graph $\B_3(\overline{K}_3)$ is sufficiently rich to capture the behavior of any $T$-triangle, allowing us to classify them as either cyclic or radial.

\begin{lemma}\label{lem:triple_vertex}
Every $T$-triangle in $\B_k(G)$ is either cyclic or radial.
\end{lemma}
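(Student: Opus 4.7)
By Lemma~\ref{lem:triangle_realizers}, a $T$-triangle $\{P_1,P_2,P_3\}$ has its three edges singly realized by distinct vertices $v_1,v_2,v_3$ with $P_2\sim_{v_1} P_3$, $P_1\sim_{v_2} P_3$, and $P_1\sim_{v_3} P_2$. My plan is to analyze the restriction $\pi_a$ of each $P_a$ to the three-element set $\{v_1,v_2,v_3\}$, treated as a set partition of that set, and to perform a case analysis on $\pi_1$. The adjacency $P_1-v_3=P_2-v_3$ yields the pairwise compatibility $\pi_1-v_3=\pi_2-v_3$ (and analogous statements for the other two edges); the same equalities show that for each $i$ the ``outside-friend'' set $M_i\subseteq V(G)\setminus\{v_1,v_2,v_3\}$ (the vertices sharing $v_i$'s part) is independent of the partition $P_a$, and that the parts of $P_a$ containing no $v_j$ coincide as multisets across the three partitions.

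I then dispatch the five possibilities for $\pi_1$. If $\pi_1$ has the single block $\{v_1,v_2,v_3\}$, then compatibility combined with distinctness of the partitions forces $\pi_2$ and $\pi_3$ to be the pair-plus-singleton partitions $\{\{v_1,v_2\},\{v_3\}\}$ and $\{\{v_1,v_3\},\{v_2\}\}$ respectively; the disjointness of parts in $P_2$ then forces $M_1=M_2=M_3=\emptyset$, and the triangle is radial. If $\pi_1$ is a pair-plus-singleton whose singleton is $v_2$ or $v_3$ (a realizer of an edge incident to $P_1$), then the adjacency $P_1\sim_{v_j} P_{a'}$ is forced to move $v_j$ from its singleton into the pair, giving $\pi_{a'}=\{\{v_1,v_2,v_3\}\}$; relabeling $P_1\leftrightarrow P_{a'}$ reduces to the previous case, and the triangle is again radial. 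If $\pi_1=\{\{v_2,v_3\},\{v_1\}\}$, so the singleton is the realizer $v_1$ of the edge opposite $P_1$, compatibility and distinctness uniquely force $\pi_2=\{\{v_1,v_3\},\{v_2\}\}$ and $\pi_3=\{\{v_1,v_2\},\{v_3\}\}$ with all $M_i=\emptyset$, yielding the cyclic configuration.

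The remaining case $\pi_1=\{\{v_1\},\{v_2\},\{v_3\}\}$ turns out to produce no $T$-triangle: after eliminating configurations that violate distinctness of $P_1,P_2,P_3$, every surviving candidate admits one of $v_1,v_2,v_3$ as a common anchor, making the triangle an $S$-triangle. The main obstacle will be this last elimination step, which requires checking the restriction equations $P_a-v=P_{a'}-v$ for every $v\in\{v_1,v_2,v_3\}$ and invoking Corollary~\ref{cor:triangle_double_edge} to exclude the $S$-triangles that at first glance look like $T$-triangle candidates.
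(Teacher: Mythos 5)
Your plan follows essentially the same path as the paper's proof: both arguments reduce the $T$-triangle to the restrictions $\pi_a = P_a|_{\{v_1,v_2,v_3\}}$, observe that the three partitions agree on $V(G)\setminus\{v_1,v_2,v_3\}$, and then classify the possible triples of restrictions by the structure of $\B_3(\overline{K}_3)\cong K_5-e$. Your outside-friend sets $M_i$ make explicit what the paper packages into the phrase ``common restriction $R$,'' which is a clean way to see why the triangle's data is concentrated on $\{v_1,v_2,v_3\}$. The paper just observes the triangles of $K_5-e$ fall into three symmetry types rather than enumerating the five shapes of $\pi_1$.

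The one genuine flaw is in your third case, $\pi_1=\{\{v_2,v_3\},\{v_1\}\}$. You claim that ``compatibility and distinctness uniquely force'' $\pi_2=\{\{v_1,v_3\},\{v_2\}\}$ and $\pi_3=\{\{v_1,v_2\},\{v_3\}\}$. This is not true. From $\pi_2-v_3=\pi_1-v_3=\{\{v_1\},\{v_2\}\}$ and $\pi_2\neq\pi_1$, the candidates for $\pi_2$ are $\{\{v_1,v_3\},\{v_2\}\}$ \emph{or} the all-singleton partition $\{\{v_1\},\{v_2\},\{v_3\}\}$; and the remaining compatibility $\pi_2-v_1=\pi_3-v_1$ together with the $M_i$-constraints does not eliminate the all-singleton option (for instance, $\pi_2$ all-singleton with $\pi_3=\{\{v_1,v_2\},\{v_3\}\}$ satisfies every restriction equation and forces $M_1=M_2=M_3=\emptyset$, which is perfectly consistent). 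That configuration is only excluded because the edge $P_1P_2$ would then be doubly realized (by both $v_2$ and $v_3$), contradicting Corollary~\ref{cor:triangle_double_edge}. In other words, the all-singleton obstruction is not confined to your final case $\pi_1=\{\{v_1\},\{v_2\},\{v_3\}\}$; it resurfaces as a subcase here, and you need the same Corollary~\ref{cor:triangle_double_edge} argument to dismiss it. Once you add that, the case analysis closes and the proof is correct.
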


\begin{proof}
Let $\{P_1, P_2, P_3\}$ be a $T$-triangle with edges 
\[ 
P_1 \sim_{v_3} P_2,\quad P_2 \sim_{v_1} P_3,\quad P_3 \sim_{v_2} P_1 
\] and $v_1,v_2,v_3$ distinct (see Corollary~\ref{cor:triangle_double_edge}). For each adjacency $P_i \sim_{v_\ell} P_j$, the partitions $P_i$ and $P_j$
agree on $V(G)\setminus\{v_1,v_2,v_3\}$. In particular, any two of
$P_1,P_2,P_3$ coincide outside $\{v_1,v_2,v_3\}$, so there is a common restriction $R$ on $V(G)\setminus\{v_1,v_2,v_3\}$ such that
\[
P_i = P_i|_{\{v_1,v_2,v_3\}} \cup R \quad\text{for } i=1,2,3.
\]
Thus, the entire structure of the triangle is encoded by the three restrictions
$P_i|_{\{v_1,v_2,v_3\}}$. These correspond to three vertices of the Bell coloring graph $\B_3(\overline{K}_3)$, which is isomorphic to $K_5-e$ (see Figure~\ref{fig:K5-minus-edge}). In that graph, there are exactly three types of triangles:
\begin{itemize}
    \item a base triangle whose vertices are the three
    ``pair+singleton'' partitions of $\{v_1,v_2,v_3\}$;
    \item triangles that include the ``triple" partition $\{\{v_1,v_2,v_3\},\emptyset,\emptyset\}$;
    \item triangles that include the ``all-singleton" partition $\{\{v_1\},\{v_2\},\{v_3\}\}$.
\end{itemize}

Edges incident to the all-singleton partition are doubly realized (they split a pair into two singletons), so by Corollary~\ref{cor:triangle_double_edge} any triangle involving the all-singleton partition is an $S$-triangle, not a $T$-triangle. Since we started with a $T$-triangle in $\B_k(G)$, its restriction to $\{v_1,v_2,v_3\}$ must correspond to one of the first two types. Translating these two possibilities back to $G$ yields:

\smallskip\noindent
\emph{Cyclic form.} The restrictions $P_i|_{\{v_1,v_2,v_3\}}$ are exactly the three
``pair+singleton'' partitions:
\[
\{\{v_1\},\{v_2,v_3\}\},\quad
\{\{v_2\},\{v_1,v_3\}\},\quad
\{\{v_3\},\{v_1,v_2\}\}.
\]
In this case, each $P_i$ contains $\{v_i\}$ as a singleton part, and the edges
are realized by $v_1,v_2,v_3$ as above, so $\{P_1,P_2,P_3\}$ is a cyclic
$T$-triangle in the sense of Definition~\ref{def:T-types}.

\smallskip\noindent
\emph{Radial form.} The restrictions $P_i|_{\{v_1,v_2,v_3\}}$ consist of one ``triple'' and two
``pair+singleton'' partitions:
\[
\{\{v_1,v_2,v_3\},\emptyset\},\quad \{\{v_1,v_2\},\{v_3\}\},\quad \{\{v_1,v_3\},\{v_2\}\}.
\]
Here, $P_1$ contains the triple $\{v_1,v_2,v_3\}$ as a part, and again the edges are realized by $v_1,v_2,v_3$, so $\{P_1,P_2,P_3\}$ is a radial $T$-triangle in the sense of Definition~\ref{def:T-types}. \end{proof}

The next lemma shows that once a $T$-triangle is fixed, any vertex adjacent to all three triangle vertices has a partition that is already determined on all vertices outside $\{v_1,v_2,v_3\}$. This rigidity will be crucial for enumerating $T$-tetrahedra and for ruling out the possibility of larger $T$-cliques.

\begin{lemma}\label{lem:fourth_vertex}
Let $\{P_1,P_2,P_3\}$ be a $T$-triangle in $\B_k(G)$ with realizers $v_1,v_2,v_3$. Let $W \colonequals V(G)\setminus\{v_1,v_2,v_3\}$, and let $R$ be the common restriction of the $P_i$ to $W$.
If $P$ is a vertex of $\B_k(G)$ adjacent to each $P_i$, then $P|_W = R$.
\end{lemma}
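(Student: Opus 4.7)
The plan is to argue by contradiction at the level of realizers. For each $i=1,2,3$, the adjacency $P\sim P_i$ is witnessed by some vertex $u_i\in V(G)$ with $P\sim_{u_i} P_i$; that is, $P$ and $P_i$ agree on $V(G)\setminus\{u_i\}$. I would classify the proof by where these three realizers sit: either in $\{v_1,v_2,v_3\}$ or in $W$. The key observation is that if even one $u_i$ lies in $\{v_1,v_2,v_3\}$, then $W\subseteq V(G)\setminus\{u_i\}$, so $P$ and $P_i$ agree on $W$, giving $P|_W=P_i|_W=R$ immediately. So the only scenario I need to rule out is the case where all three realizers lie in $W$.

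Now suppose for contradiction that $u_1,u_2,u_3\in W$. Then for each $i$, the set $\{v_1,v_2,v_3\}$ is contained in $V(G)\setminus\{u_i\}$, so $P$ and $P_i$ agree on $\{v_1,v_2,v_3\}$. This forces
\[
P|_{\{v_1,v_2,v_3\}}=P_1|_{\{v_1,v_2,v_3\}}=P_2|_{\{v_1,v_2,v_3\}}=P_3|_{\{v_1,v_2,v_3\}}.
\]
But by the classification in Lemma~\ref{lem:triple_vertex}, whether $\{P_1,P_2,P_3\}$ is a cyclic or a radial $T$-triangle, its three members have pairwise distinct restrictions to $\{v_1,v_2,v_3\}$: the cyclic form gives the three ``pair+singleton'' partitions of $\{v_1,v_2,v_3\}$, while the radial form gives one ``triple'' partition and two distinct ``pair+singleton'' partitions. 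This pairwise distinctness yields the contradiction.

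Therefore at least one realizer $u_i$ lies in $\{v_1,v_2,v_3\}$, and the first paragraph's observation finishes the proof. I do not expect any significant obstacle: the argument is essentially a case split on whether a witnessing vertex lies inside or outside $\{v_1,v_2,v_3\}$, with the impossibility of the ``all outside'' case following directly from the explicit list produced by Lemma~\ref{lem:triple_vertex}. The only thing to double-check is the pairwise distinctness of the three restrictions in both the cyclic and radial cases, which one can read off the explicit descriptions and also verify conceptually from the fact that $P_1,P_2,P_3$ are distinct partitions that already agree on $W$.
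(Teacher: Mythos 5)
Your proof is correct and follows essentially the same argument as the paper's: case-split on whether a realizer of the edge $PP_i$ lies in $\{v_1,v_2,v_3\}$ or in $W$, handle the first case directly, and rule out the all-in-$W$ case via the pairwise distinctness of the three restrictions from Lemma~\ref{lem:triple_vertex}. The paper merely frames this as a single contradiction argument from $P|_W\neq R$ rather than your direct case split, but the logical content is identical.
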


\begin{proof}
Suppose, to the contrary, that $P|_W \neq R$. For each $i$, write $P \sim_{a_i} P_i$ for some $a_i\in V(G)$.
If $a_i \in \{v_1, v_2, v_3\}$, then $P$ and $P_i$ agree on $W$; since $P_i|_W = R$, this implies $P|_W = R$, a contradiction.
Therefore, we must have $a_i \in W$ for all $i=1,2,3$.

Since $a_i \in W$, the adjacency $P \sim_{a_i} P_i$ forces $P$ and $P_i$ to
agree on $V(G)\setminus W = \{v_1,v_2,v_3\}$. Thus, we obtain
\[
P_1|_{\{v_1,v_2,v_3\}} = P|_{\{v_1,v_2,v_3\}} = P_2|_{\{v_1,v_2,v_3\}}.
\]
However, by Lemma~\ref{lem:triple_vertex}, the $T$-triangle
$\{P_1,P_2,P_3\}$ is either cyclic or radial, and in both patterns the three restrictions $P_i|_{\{v_1,v_2,v_3\}}$ are pairwise distinct (as is clear from the explicit forms in its proof). This contradiction shows that our assumption $P|_W \neq R$ was false, so $P|_W = R$.
\end{proof}

We now state and prove the general classification theorem. 

\begin{theorem}\label{thm:clique_classification}
For any clique $\mathcal K=\{P_1,\dots,P_m\}$ with $m\ge 1$ in $\B_k(G)$, exactly one of the following holds:
\begin{enumerate}
    \item $\mathcal K=\{P_1,P_2,P_3\}$ is a cyclic $T$-triangle;
    \item $\mathcal K=\{P_1,P_2,P_3\}$  is a radial $T$-triangle;
    \item $\mathcal K=\{P_1,P_2,P_3,P_4\}$  is a split $T$-tetrahedron; 
    \item $\mathcal K=\{P_1,P_2,P_3,P_4\}$ is a fused $T$-tetrahedron; or 
    \item $\mathcal K=\{P_1,\dots,P_m\}$ is an $S$-clique.
\end{enumerate} 
\end{theorem}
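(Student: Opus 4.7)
The proof proceeds by induction on $m = |\mathcal{K}|$. The cases $m \leq 2$ are trivial (any vertex of $G$ anchors the clique), and $m = 3$ follows from Lemma~\ref{lem:triple_vertex} combined with Definitions~\ref{def:S-vs-T} and \ref{def:T-types}. The essential work is the $K_4$ case and a uniform induction ruling out $T$-triangles in larger cliques.

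For $m = 4$, I split on whether $\mathcal{K}$ contains a $T$-triangle. If $\{P_1, P_2, P_3\}$ is a $T$-triangle in $\mathcal{K}$ with realizers $v_1, v_2, v_3$ and common restriction $R$ on $W = V(G) \setminus \{v_1, v_2, v_3\}$, then Lemma~\ref{lem:fourth_vertex} forces $P_4|_W = R$. The four restrictions $P_i|_{\{v_1, v_2, v_3\}}$ therefore form a $K_4$ inside $\B_3(\overline{K}_3) \cong K_5 - e$ (Figure~\ref{fig:K5-minus-edge}), of which exactly two exist: one containing the all-singleton partition and one containing the triple partition. Lifting each back through $R$ and reading the triangle types from Figure~\ref{fig:K5-minus-edge} gives a split $T$-tetrahedron in the first case and a fused $T$-tetrahedron in the second. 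If instead every face of $\mathcal{K}$ is an $S$-triangle, denote the anchor of the face opposite $P_i$ by $a_i$, and suppose toward contradiction that $a_3 \neq a_4$. Then $P_1 P_2$ is doubly realized, and Lemma~\ref{lem:double_edge} forces $P_1 = \{\{a_3, a_4\}, \emptyset\} \cup R$ and $P_2 = \{\{a_3\}, \{a_4\}\} \cup R$. Tracking how the anchor $a_2$ of $\{P_1, P_3, P_4\}$ interacts with this decomposition, via Remark~\ref{rem:double-edge-responsible}, Lemma~\ref{lem:double_edge}, and the anchor uniqueness from Lemma~\ref{lem:anchor_uniqueness}, forces, by a short case analysis on whether $a_2$ coincides with $a_3$ or $a_4$, two incompatible multiset representations of a single partition in which $a_3$ (or $a_4$) would need to lie in parts of different sizes. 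Hence all four anchors coincide and $\mathcal{K}$ is an $S$-clique.

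For $m \geq 5$, I first rule out $T$-triangles in $\mathcal{K}$. If $\{P_1, P_2, P_3\}$ were a $T$-triangle, Lemma~\ref{lem:fourth_vertex} and the structure of $\B_3(\overline{K}_3) \cong K_5 - e$ would constrain each $P \in \mathcal{K} \setminus \{P_1, P_2, P_3\}$ to satisfy $P|_W = R$ with $P|_{\{v_1, v_2, v_3\}}$ a common neighbor of the triangle in $K_5 - e$. In the radial case this common neighbor is unique, so any two extra vertices of $\mathcal{K}$ would coincide; in the cyclic case the two options correspond to the endpoints of the missing edge of $K_5 - e$, hence are non-adjacent in $\B_k(G)$. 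Either way $|\mathcal{K}| \leq 4$, contradicting $m \geq 5$. Thus every triangle of $\mathcal{K}$ is an $S$-triangle, so by the inductive hypothesis (with base case $m = 4$) every $(m-1)$-subclique is an $S$-clique. Since $m - 2 \geq 3$, Lemma~\ref{lem:anchor_uniqueness} forces the anchors of two overlapping $(m-1)$-subcliques to coincide on their $(m-2)$-intersection, so all such anchors agree and supply a common anchor for $\mathcal{K}$.

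The main obstacle is the $K_4$-with-all-$S$-triangles subcase, where induction is unavailable. The argument there must carefully track how doubly realized edges arising from disagreeing face anchors impose multiple multiset representations on a common partition via Lemma~\ref{lem:double_edge}, with the conflict resolved only when all four face anchors coincide.
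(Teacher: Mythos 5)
Your proposal is correct and follows the paper's strategy almost step for step: the same case split on $m$, the same use of Lemma~\ref{lem:fourth_vertex} and the embedding into $\B_3(\overline{K}_3)\cong K_5-e$ for cliques containing a $T$-triangle, and the same anchor-gluing via Lemma~\ref{lem:anchor_uniqueness} for $m\ge 5$ (your induction on $(m-1)$-subcliques is only cosmetically different from the paper's gluing of $K_4$-subcliques). The one place you genuinely diverge is the all-$S$-faces tetrahedron. The paper dismisses it in one line, asserting that $S$-triangles sharing an edge must share an anchor ``by Lemma~\ref{lem:anchor_uniqueness}''; as a blanket statement this is false (in $\B_3(\overline{K}_3)$ the $S$-triangles $\{P_0,P_1,P_2\}$ and $\{P_0,P_1,P_3\}$ share the edge $P_0P_1$ but have anchors $v_3$ and $v_2$), so an argument of the kind you sketch is genuinely needed there. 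Your sketch does close, and you should execute it: with $P_1=\{\{a_3,a_4\},\emptyset\}\cup R$ and $P_2=\{\{a_3\},\{a_4\}\}\cup R$, the edge $P_1P_3$ can be realized only by $a_4$ --- a second realizer $a_2\neq a_4$ would, by Lemma~\ref{lem:double_edge}, either force $a_4$ to sit in a singleton part of $P_1$ (it does not) or force $a_2=a_3$ and $P_3=P_2$ --- and symmetrically $P_1P_4$ is realized only by $a_3$; since an anchor of the face $\{P_1,P_3,P_4\}$ must realize both edges, $a_3=a_4$, a contradiction. So your route is the more careful one at this subcase: its cost is that the ``short case analysis'' must actually be written out, and its benefit is that it supplies the justification the paper leaves essentially implicit.
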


\begin{proof}[Proof of Theorem~\ref{thm:clique_classification}] 
Let $\mathcal{K}=\{P_1, \dots, P_m\}$ be a clique in $\B_k(G)$. Let $V_{ij} = \{v \in V(G) \mid P_i \sim_v P_j\}$. By Lemma~\ref{lem:double_edge}, we have $|V_{ij}| \in \{1, 2\}$. We proceed by a case analysis on $m$.

\smallskip 

\noindent \textbf{Case $m=1$ (vertex).} A single vertex vacuously has an anchor, so $\mathcal K$ is an $S$-clique.

\noindent \textbf{Case $m=2$ (edge).} 
The unique edge in $\mathcal{K}$ is either singly realized or doubly realized (see Lemma~\ref{lem:double_edge} for the latter). In either case, $\mathcal{K}$ is an $S$-clique.

\smallskip 

\noindent \textbf{Case $m=3$ (triangle).}
Apply Lemma~\ref{lem:triangle_realizers} to $\{P_1,P_2,P_3\}$. Either there is an anchor $u$ ($S$-clique) or each edge is singly realized by distinct vertices ($T$-triangle).

\smallskip 

\noindent \textbf{Case $m=4$ (tetrahedron).} If all faces are $S$-triangles, the clique is an $S$-clique; indeed, by Lemma~\ref{lem:anchor_uniqueness}, $S$-triangles sharing an edge must share the same anchor.

Otherwise, $\mathcal{K}$ contains a $T$-triangle $\{P_1, P_2, P_3\}$ realized by distinct vertices $\{v_1, v_2, v_3\}$. By Lemma~\ref{lem:triple_vertex},
all partitions in this triangle coincide on $W \colonequals V(G) \setminus \{v_1, v_2, v_3\}$. Any fourth vertex $P \in \mathcal{K}$ must also agree with them on $W$ by Lemma~\ref{lem:fourth_vertex}. Thus, the structure of $\mathcal{K}$ is determined entirely by the restrictions of its vertices to $\{v_1, v_2, v_3\}$.

The graph of stable partitions on three vertices is isomorphic to $\B_3(\overline{K}_3) \cong K_5 - e$ (see Figure~\ref{fig:K5-minus-edge}). The only $4$-cliques in $K_5 - e$ are the two tetrahedra sharing the base triangle. These correspond precisely to the split $K_4$ (containing the all-singleton partition $P_0$) and the fused $K_4$ (containing the triple partition $P_4$).

\smallskip 

\noindent\textbf{Case $m \ge 5$ (higher order).}
If $\mathcal{K}$ contains a $T$-triangle, it must embed into the local graph described above. Since $\B_3(\overline{K}_3) \cong K_5-e$ contains no 5-clique, no $T$-clique can extend to size $m \ge 5$. Thus, every $K_4$ subclique of $\mathcal{K}$ must be an $S$-clique. By Lemma~\ref{lem:anchor_uniqueness}, the anchors of overlapping $S$-cliques must coincide, so the entire clique $\mathcal{K}$ is an $S$-clique. \end{proof}

\subsection{Forbidden induced subgraphs}\label{sec:forbidden}

As a first application of the clique classification, we identify a small graph that cannot occur as a Bell coloring graph.

\begin{theorem}\label{thm:K4-e}
The diamond graph $K_4-e$ is not isomorphic to $\B_k(G)$ for any graph $G$ and integer $k$.
\end{theorem}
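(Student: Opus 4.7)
The plan is to argue by contradiction: assume $\mathcal{B}_k(G) \cong K_4 - e$, label the four partitions $P_1, P_2, P_3, P_4$ so that $P_3P_4$ is the missing edge, and use the two triangles $T_1 = \{P_1, P_2, P_3\}$ and $T_2 = \{P_1, P_2, P_4\}$ that share the edge $P_1P_2$. By Theorem~\ref{thm:clique_classification}, each triangle is an $S$-triangle, a cyclic $T$-triangle, or a radial $T$-triangle. In every sub-case I aim to derive one of two contradictions: either $P_3 \sim P_4$ (which would extend the diamond to a $K_4$), or the existence of a fifth stable partition $P_5 \in \mathcal{P}_k(G)$, contradicting $|\mathcal{P}_k(G)| = 4$.

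I would organize cases by $|V_{12}|\in\{1,2\}$ where $V_{12}=\{v\in V(G):P_1\sim_v P_2\}$. When $|V_{12}|=2$, Lemma~\ref{lem:triangle_realizers} forces both $T_1, T_2$ to be $S$-triangles, and Lemma~\ref{lem:double_edge} gives $P_1=\{\{v,w\},\emptyset\}\cup R$, $P_2=\{\{v\},\{w\}\}\cup R$ with $v,w$ non-adjacent. If the two $S$-triangle anchors coincide then all four partitions agree outside one vertex and $P_3\sim P_4$; if they differ (say $v$ and $w$) then $P_3$ and $P_4$ are forced into forms $\{\{w\},\emptyset,S+v\}\cup(R-S)$ and $\{\{v\},\emptyset,T+w\}\cup(R-T)$ for non-empty $S,T\in R$, and an explicit fifth stable partition (e.g.\ $\{\emptyset,\emptyset,S+v+w\}\cup(R-S)$ when $S=T$, or $\{\emptyset,\emptyset,S+v,T+w\}\cup(R-S-T)$ when $S\neq T$, obtained from $P_3$ by moving $w$ into the part $T$) contradicts $|\mathcal{P}_k(G)|=4$.

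When $V_{12}=\{v\}$ is a singleton, $v$ must be the anchor of any $S$-triangle among $T_1, T_2$ and must serve as the $P_1P_2$-realizer of any $T$-triangle. If both triangles are $S$-triangles with anchor $v$ then $P_3\sim_v P_4$. If at least one triangle (say $T_1$) is a $T$-triangle with realizers $\{a,b,v\}$, the resulting independence structure on $\{a,b,v\}$ always produces a new stable partition: in the cyclic case, $\{a,b,v\}$ is independent so the ``triple'' partition with part $\{a,b,v\}$ is new; in the radial case, of the three pair-plus-singleton partitions on $\{a,b,v\}$ only two appear in $T_1$, and the third is stable and new. Finally, if both triangles are $T$-triangles sharing $v$ as the $P_1P_2$-realizer, I would compare their remaining realizer sets $\{v_1^{(i)},v_2^{(i)}\}$: when equal, a case analysis on cyclic/radial types (and the choice of center in radial cases) forces either $P_3=P_4$ or $P_3\sim_v P_4$; when different, picking a realizer $v_1^{(2)}$ of $T_2$ outside $T_1$'s realizer set, the adjacencies $P_1\sim_{v_2^{(2)}}P_4$, $P_2\sim_{v_1^{(2)}}P_4$, and $P_1\sim_v P_2$ place incompatible constraints on the part of $P_4$ containing $v$.

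The main obstacle is the volume of sub-cases rather than any single deep step. The key technical care is to verify in each case that the proposed fifth partition is (a) stable in $G$, using independence data from Lemma~\ref{lem:double_edge} or the cyclic/radial structure on $\{a,b,v\}$, and (b) genuinely distinct as an unordered multiset of parts from $P_1,\ldots,P_4$; additional bookkeeping is needed when the common ``rest'' $R$ contains empty parts (which can cause candidate partitions to collapse onto $P_1$ or $P_2$) and when a part of some $P_i$ spans both $\{v_1,v_2,v_3\}$ and its complement.
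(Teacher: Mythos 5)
Your proposal is correct and follows essentially the same route as the paper: both arguments run on the clique-classification machinery (Lemma~\ref{lem:double_edge}, anchor uniqueness, and the cyclic/radial structure of $T$-triangles) and derive the same two contradictions --- a forced edge across the missing pair, or a fifth stable partition obtained by performing the two anchor moves simultaneously. The differences are organizational rather than substantive: you case on the multiplicity of the shared edge and unwind the paper's ``every triangle of $K_5-e$ lies in a $K_4$'' step into explicit completing partitions (where ``new'' should be read as ``either a genuinely fifth partition or equal to the fourth vertex of the diamond, which then forces the missing edge $P_3P_4$''), and your final both-$T$-triangles case is already subsumed by that step.
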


\begin{proof}
Suppose $\B_k(G) \cong K_4-e$ with vertices $\{P_1, P_2, P_3, P_4\}$ and missing edge $P_1P_4$. The graph consists of two triangles $\triangle_1 = P_1P_2P_3$ and $\triangle_2 = P_2P_3P_4$ sharing the edge $P_2P_3$.

First, $\triangle_1$ is not a $T$-triangle. If it were, its vertices would arise from three distinct realizers $\{v_1, v_2, v_3\}$. As shown in Figure~\ref{fig:K5-minus-edge}, the local structure of partitions on three vertices is isomorphic to $K_5-e$. In that graph, every triangle is contained in a $K_4$. Thus, the vertices $\{P_1, P_2, P_3\}$ would extend to a $K_4$ involving $P_4$. This forces $P_1 \sim P_4$, a contradiction. By symmetry, $\triangle_2$ is also not a $T$-triangle.

Therefore, both $\triangle_1$ and $\triangle_2$ are $S$-triangles. Let $u$ and $v$ be their respective anchors. If $u=v$, transitivity of adjacency at $u$ implies $P_1 \sim_u P_4$, a contradiction. Thus $u \neq v$, and the shared edge $P_2P_3$ is realized by both $u$ and $v$. By Lemma~\ref{lem:double_edge}, we may write:
\[
P_2 = \{\{u, v\}, \emptyset\} \cup R \qquad \text{and} \qquad P_3 = \{\{u\}, \{v\}\} \cup R.
\]
Since $P_1\neq P_3$, the partition $P_1$ must be obtained from $P_2$ by moving $u$ to a part $A \in R$. Similarly, $P_4$ is obtained from $P_2$ by moving $v$ to a part $B \in R$. Define a new partition $P_5$ by performing both moves simultaneously:
\[
P_5 \colonequals (P_2 \setminus \{\{u,v\}, A, B\}) \cup \{A \cup \{u\}, B \cup \{v\}, \emptyset\}.
\]
(If $A=B$, the new parts merge into $A \cup \{u, v\}$). $P_5$ is stable because $A \cup \{u\}$ is stable (implied by $P_1$) and $B \cup \{v\}$ is stable (implied by $P_4$). Since $P_5$ differs from $P_1, \dots, P_4$ by the placement of at least one vertex, its existence implies $|\B_k(G)| \ge 5$, a contradiction.
\end{proof}

Despite not being a Bell coloring graph, $K_4-e$ appears as an induced subgraph of a Bell coloring graph. Indeed, $K_4-e$ is an induced subgraph of $K_5-e$, which is a Bell coloring graph due to $K_5-e\cong \B_3(\overline{K}_3)$ from Figure~\ref{fig:K5-minus-edge}. On the other hand, our classification theorem shows that $K_6-e$ is a forbidden subgraph. 

\begin{theorem}\label{thm:forbidden}
The graph $K_{6}-e$ is not an induced subgraph of any Bell coloring graph. Hence, the set of graphs $K_n-e$ for $n\ge 6$ is an infinite family of forbidden induced subgraphs.
\end{theorem}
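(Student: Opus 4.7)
The plan is to derive a contradiction from the hypothesis that $K_6-e$ is an induced subgraph of some Bell coloring graph $\B_k(G)$. Label the vertices of the induced $K_6-e$ as $P_1,\ldots,P_6$, with $P_1P_2$ being the unique missing edge. The key observation is that each of the two sets
\[
\mathcal K_1 \colonequals \{P_1,P_3,P_4,P_5,P_6\}, \qquad \mathcal K_2 \colonequals \{P_2,P_3,P_4,P_5,P_6\}
\]
induces a $K_5$ in $\B_k(G)$, and the classification in Theorem~\ref{thm:clique_classification} leaves only one possibility: every clique of order $m\ge 5$ is an $S$-clique.

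So both $\mathcal K_1$ and $\mathcal K_2$ are $S$-cliques, and by Lemma~\ref{lem:anchor_uniqueness} each has a unique anchor, say $a_1$ and $a_2$. The common sub-$K_4$ on $\{P_3,P_4,P_5,P_6\}$ is itself an $S$-clique (any anchor of the larger clique anchors the smaller one), and it also has size $\ge 3$, so Lemma~\ref{lem:anchor_uniqueness} forces $a_1=a_2$. Call this common anchor $a$. In particular, $P_1\sim_a P_3$ and $P_3\sim_a P_2$. Invoking the transitivity fact recorded at the end of Section~\ref{sec:preliminaries}, the partitions $P_1$ and $P_2$ must agree on $V(G)\setminus\{a\}$. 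But then either $P_1=P_2$, contradicting the distinctness of the six vertices, or $P_1\sim_a P_2$, contradicting the fact that $P_1P_2$ is a non-edge of the induced subgraph. Either way we reach a contradiction, completing the proof for $n=6$.

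For the final assertion, observe that for every $n\ge 6$, the graph $K_n-e$ contains $K_6-e$ as an induced subgraph: select the two endpoints of the missing edge together with any four of the remaining $n-2$ vertices, and the induced subgraph on these six vertices has exactly one missing edge (the same one), hence is isomorphic to $K_6-e$. Consequently, if $K_n-e$ embedded as an induced subgraph into some Bell coloring graph, then so would $K_6-e$, contradicting what was just proved. Thus $\{K_n-e : n\ge 6\}$ is an infinite family of forbidden induced subgraphs.

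The substantive step is the reduction to a pair of $K_5$'s sharing a $K_4$; once this is in place the classification theorem and anchor uniqueness do all the work, and the transitivity of $\sim_a$ collapses the missing edge. No delicate case analysis on cliques is needed beyond citing Theorem~\ref{thm:clique_classification} and Lemma~\ref{lem:anchor_uniqueness}.
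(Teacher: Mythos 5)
Your proof is correct and follows essentially the same route as the paper: both decompose $K_6-e$ into two $K_5$'s sharing a $K_4$, invoke Theorem~\ref{thm:clique_classification} to conclude each $K_5$ is an $S$-clique, use Lemma~\ref{lem:anchor_uniqueness} on the shared subclique to identify the two anchors, and then let the common anchor force the missing edge $P_1P_2$, yielding a contradiction (and the reduction of $K_n-e$ to $K_6-e$ for $n\ge 6$ is the same in both).
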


\begin{proof}
    Suppose $H \cong K_6-e$ is an induced subgraph of $\B_k(G)$. The graph $H$ consists of two copies of $K_5$, say $A$ and $B$, intersecting in a $K_4$.
    By Theorem~\ref{thm:clique_classification}, every clique of size $\ge 5$ is an $S$-clique. Thus, $A$ and $B$ are $S$-cliques.
    Since $A$ and $B$ intersect in a $K_4$, they share an $S$-triangle. By Lemma~\ref{lem:anchor_uniqueness}, $A$ and $B$ share a common anchor vertex $u$. Consequently, the union $H = A \cup B$ is entirely realized by $u$, implying $H$ is a complete graph $K_6$. This contradicts the missing edge in $K_6-e$.

    Thus, $K_6-e$ is not an induced subgraph of any Bell coloring graph. Since $K_n-e$ contains $K_6-e$ as an induced subgraph, it follows that $K_n-e$ is also forbidden for each $n\geq 6$.
\end{proof}

Similar reasoning forbids other graphs composed of intersecting cliques. For instance, $K_7-K_{2,2}$ (formed by two copies of $K_5$ sharing a triangle) is not an induced subgraph of any Bell coloring graph. While these examples suggest a rich theory of forbidden subgraphs for $\B_k(G)$, a complete classification lies beyond the scope of this paper.

\section{Matching graphs as a source of Bell coloring graphs}~\label{sec:matchings} 

In this section, we introduce the matching reconfiguration graph and establish its connection to Bell coloring graphs. As a consequence, we obtain Theorem~\ref{thm:trees-cycles}, stating that all trees and cycle graphs are realizable as Bell coloring graphs.

Recall that a \emph{matching} is a set of edges that are pairwise vertex-disjoint. For a matching $M \subseteq E(G)$, we let $G\langle M \rangle$ denote the spanning subgraph of $G$ with edge set precisely $M$. In other words, $G\langle M\rangle$ is obtained from $G$ by removing all edges in $E[G]\setminus M$, while preserving the vertex set of $G$.

\begin{defn}
Given a graph $G$ and $k\in\mathbb{N}$, the \emph{matching reconfiguration graph} $\mathcal{M}_k(G)$ is the graph whose vertex set consists of all matchings in $G$ of size at least $k$. Two matchings $M_1, M_2$ are adjacent if there exists $v\in V(G)$ such that $G\langle M_1\rangle-v = G\langle M_2\rangle-v$. 
\end{defn}

We write $M_1 \sim_v M_2$ to indicate adjacency via $v$, which implies that $M_1$ and $M_2$ differ only by edges incident to $v$. We write $M_1 \sim M_2$ if $M_1 \sim_v M_2$ for some $v\in V(G)$. 

We show that the matching graph $\mathcal{M}_k(G)$ has a Bell coloring graph interpretation. Figure~\ref{fig:M-cong-B} gives a concrete example, and Proposition~\ref{prop:matchings} establishes the general result.

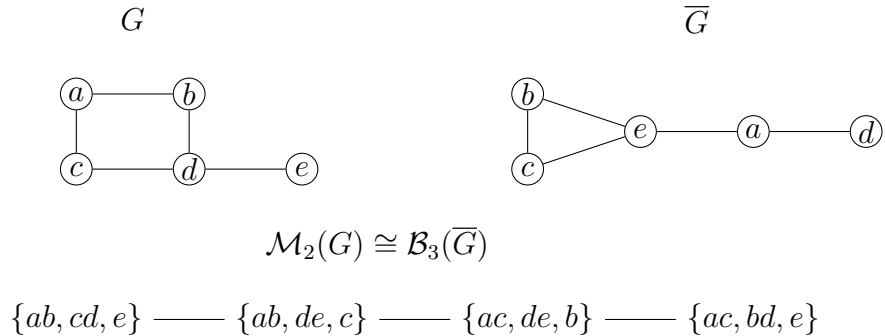
\begin{figure}[h]
\centering
\begin{tikzpicture}
 \tikzset{
    vertex/.style={
      circle,draw,
      inner sep=0pt,
      minimum size=12pt  
    },
  }
  \node[vertex] (a) at (0,1) {$a$};
  \node[vertex] (b) at (1.5,1) {$b$};
  \node[vertex] (c) at (0,0) {$c$};
  \node[vertex] (d) at (1.5,0) {$d$};
  \node[vertex] (e) at (3,0) {$e$};

  \draw (a)--(b)--(d)--(c)--(a);
  \draw (d)--(e);

  \node at (0.75,2) {$G$};

  \begin{scope}[xshift=6cm]
    \node[vertex] (b2) at (0,1) {$b$};
    \node[vertex] (c2) at (0,0) {$c$};
    \node[vertex] (e2) at (1.5,0.5) {$e$};
    \node[vertex] (a2) at (3,0.5) {$a$};
    \node[vertex] (d2) at (4.5,0.5) {$d$};

    \draw (b2)--(c2);
    \draw (b2)--(e2);
    \draw (c2)--(e2);
    \draw (e2)--(a2)--(d2);

    \node at (2.25,2) {$\overline{G}$};
  \end{scope}

  \node at (4,-1.0) {$\mathcal{M}_2(G) \cong \mathcal{B}_3(\overline{G})$};

  \node (x1) at (0,-2)   {$\{ab,cd,e\}$};
  \node (x2) at (3,-2)   {$\{ab,de,c\}$};
  \node (x3) at (6,-2)   {$\{ac,de,b\}$};
  \node (x4) at (9,-2)   {$\{ac,bd,e\}$};

  \draw (x1)--(x2)--(x3)--(x4);
\end{tikzpicture} 
\caption{Illustration that $\mathcal M_2(G)\cong \B_3(\overline{G})$ for a particular graph $G$ on 5 vertices. Vertex labels can be interpreted as matchings of size at least 2 or stable 3-partitions.}
\label{fig:M-cong-B}
\end{figure}

\begin{prop}\label{prop:matchings}
If $G$ is a triangle-free graph of order $n$, then $\mathcal{M}_k(G) \cong \B_{n-k}(\overline{G})$.
\end{prop}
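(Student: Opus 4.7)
The plan is to construct an explicit bijection
\[
\phi\colon V(\mathcal M_k(G)) \longrightarrow V(\B_{n-k}(\overline G))
\]
and verify that it preserves adjacency, with the same vertex of $V(G)=V(\overline G)$ realizing each adjacency in both directions.

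Given a matching $M$ of $G$ of size at least $k$, I let $\phi(M)$ be the multiset of parts consisting of a pair $\{u,v\}$ for each edge $uv\in M$, a singleton $\{u\}$ for each vertex unmatched by $M$, and $|M|-k$ empty parts. Each pair is a non-edge of $\overline G$, hence independent in $\overline G$, and the total number of parts is $|M|+(n-2|M|)+(|M|-k)=n-k$, so $\phi(M)\in\cP_{n-k}(\overline G)$. Conversely, any part of a stable partition of $\overline G$ is an independent set of $\overline G$, equivalently a clique of $G$; since $G$ is triangle-free, every part has size at most $2$. Thus a partition $P\in\cP_{n-k}(\overline G)$ with $p$ pairs, $s$ singletons, and $j$ empties satisfies $s+2p=n$ and $s+p+j=n-k$, so $p=k+j\geq k$, and the $2$-parts form a matching in $\mathcal M_k(G)$. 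This gives an inverse to $\phi$.

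It remains to show that $\phi$ preserves adjacency with the same realizing vertex. If $M_1\sim_v M_2$ in $\mathcal M_k(G)$, then $M_1$ and $M_2$ agree on edges not incident to $v$; since a matching contains at most one edge at $v$, the difference falls into three patterns: adding an edge at $v$, removing one, or swapping one partner of $v$ for another. A direct multiset comparison in each case shows that $\phi(M_1)-v=\phi(M_2)-v$; the change in the number of empty parts (in the add/remove cases) is exactly compensated by the singleton created or destroyed at the (un)matched partner of $v$. Conversely, if $\phi(M_1)\sim_w\phi(M_2)$, then $w$ lies in different parts of the two partitions, each of size at most $2$, yielding three mirror subcases that translate back to $M_1\sim_w M_2$. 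The main technical issue is just the empty-part bookkeeping when $|M_1|\ne|M_2|$, but it is routine once the three patterns are made explicit.
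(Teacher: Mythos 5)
Your proposal is correct and follows essentially the same route as the paper: the same map sending edges of $M$ to $2$-element parts and unmatched vertices to singletons, with triangle-freeness forcing every part of a stable partition of $\overline G$ to have size at most $2$, and adjacency checked via $P-v=Q-v$. If anything, you are slightly more explicit than the paper about the empty-part count $|M|-k$ and the case analysis behind the adjacency equivalence, which the paper states without detail.
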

 
\begin{proof}

We construct a graph isomorphism between $\mathcal{M}_k(G)$ and $\B_{n-k}(\Gbar)$. Given a matching $M\subseteq E(G)$, we define a stable partition $\Phi(M)$ of $\Gbar$. Suppose $M$ contains edges $u_1 v_1, \dots, u_{\ell} v_{\ell}$ and isolated vertices $w_{1}, \dots, w_{n-2\ell}$ where $\ell\geq k$. Let
\[
\Phi(M) = \{\{u_1, v_1\}, \dots, \{u_{\ell}, v_{\ell}\}, \{w_1\}, \dots, \{w_{n-2\ell}\}\}.
\]
For each $1\leq i\leq \ell$, the set $\{u_i, v_i\}$ is indeed independent in $\Gbar$. The map $\Phi$ is injective because a stable partition of the above form uniquely determines the underlying matching. Moreover, $\Phi$ is surjective because $\Gbar$ contains no independent set of size three (equivalently, $G$ is triangle-free), so every stable $(n-k)$-partition of the indicated form comes from a matching.

Finally, note that $M_1$ and $M_2$ are adjacent matchings if $G\langle M_1\rangle-v = G\langle M_2\rangle-v$. This is equivalent to $\Phi(M_1)-v = \Phi(M_2) - v$. We deduce that $\Phi$ is a graph isomorphism.
\end{proof}

The isomorphism in Proposition~\ref{prop:matchings} enables us to realize all cycles and trees as Bell coloring graphs. We focus on the case where $n=|V(G)|=2k+1$. More formally, if $n=2k+1$ then we call $\mathcal{M}_k(G)$ the \emph{near-perfect matching graph} of $G$. The following lemma gives three equivalent conditions for two near-perfect matchings to be adjacent in a matching graph, which helps us reason about matching graphs that are cycles. In particular, it shows that near-perfect matchings are adjacent if and only if the unmatched vertices $u$ and $v$ have a common neighbor $w$ such that the symmetric difference of the matching is $\{uw,uv\}$. 

\begin{lemma}\label{lem:dist-2}
    Let $M_1,M_2$ be two near-perfect matchings of $G$ with $v$ unmatched in $M_1$ and $u$ unmatched in $M_2$. The following are equivalent:
    \begin{enumerate}
        \item\label{item:adj1} $G\zap{M_1}-w=G\zap{M_2}-w$ for some $w\in V(G)$.
        \item\label{item:adj2} $M_1=(M_2\setminus\{vw\})\cup\{uw\}$ for some $w\in V(G)$.
        \item\label{item:adj3} $uw\in M_1$, $vw\in M_2$, and $G\zap{M_1}-u-v-w=G\zap{M_2}-u-v-w$ for some $w\in V(G)$.
    \end{enumerate}
\end{lemma}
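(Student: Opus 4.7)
My plan is to establish the cyclic chain $(1)\Rightarrow(2)\Rightarrow(3)\Rightarrow(1)$, with the understanding that $M_1\neq M_2$ (the context of adjacency in the matching graph), and with the witnessing vertex $w$ remaining the same across the three statements.

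For $(1)\Rightarrow(2)$, the hypothesis $G\zap{M_1}-w=G\zap{M_2}-w$ says precisely that $M_1$ and $M_2$ agree on edges not incident to $w$, so the symmetric difference $M_1\triangle M_2$ is contained in the set of edges at $w$. Since each matching has at most one edge at $w$, this gives $|M_1\triangle M_2|\le 2$; combined with $|M_1|=|M_2|=k$, parity forces $|M_1\triangle M_2|\in\{0,2\}$, and $M_1\neq M_2$ rules out the first case. Write $M_1\setminus M_2=\{wa\}$ and $M_2\setminus M_1=\{wb\}$ with $a\neq b$. Then $a$ is matched in $M_1$ but unmatched in $M_2$: any $M_2$-edge at $a$ would lie in $M_2\setminus M_1$, hence pass through $w$, forcing $a=w$ (impossible). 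By the uniqueness of the unmatched vertex, $a=u$, and symmetrically $b=v$, so $M_1=(M_2\setminus\{vw\})\cup\{uw\}$.

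For $(2)\Rightarrow(3)$, from $M_1=(M_2\setminus\{vw\})\cup\{uw\}$ together with $|M_1|=|M_2|$ I would conclude $vw\in M_2$ and $uw\in M_1$ (else the operation changes cardinality). The common sub-matching $M_1\setminus\{uw\}=M_2\setminus\{vw\}$ has no edge at $u$, $v$, or $w$ (since $u$ and $v$ are unmatched in $M_2$ and $M_1$ respectively, and the $w$-edges have been stripped off), yielding the required equality $G\zap{M_1}-u-v-w=G\zap{M_2}-u-v-w$. For $(3)\Rightarrow(1)$, deleting $w$ from each $G\zap{M_i}$ removes its unique $M_i$-edge at $w$ (namely $uw$ and $vw$); the remaining edges agree by hypothesis and avoid $u$ and $v$, so both $u$ and $v$ appear as isolated vertices in $G\zap{M_1}-w$ and $G\zap{M_2}-w$, and the two graphs coincide.

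The principal obstacle is the cardinality/parity bookkeeping in $(1)\Rightarrow(2)$: one must leverage both the fact that each matching contains at most one edge at $w$ and the equal-size condition $|M_1|=|M_2|=k$ to pin down the symmetric difference as exactly a length-two alternating path $u\,w\,v$, and then recognize its two dangling endpoints as the unique unmatched vertices. The remaining two implications amount to matching up the explicit descriptions of $M_1$ and $M_2$ in terms of $u$, $v$, $w$, and the common remaining sub-matching.
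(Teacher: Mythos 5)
Your proof is correct, but it is organized around a different cycle of implications than the paper's: you prove $(1)\Rightarrow(2)\Rightarrow(3)\Rightarrow(1)$, whereas the paper proves $(1)\Rightarrow(3)\Rightarrow(2)\Rightarrow(1)$, and the hardest step is handled differently. For the main implication, the paper argues $(1)\Rightarrow(3)$ by counting: it observes that $G\zap{M_1}-u-v-w=G\zap{M_2}-u-v-w$ must contain $k-1$ common edges on $2k-2$ vertices, rules out edges leaving $\{u,v,w\}$, and then eliminates $uv\in M_1$ to force $uw\in M_1$, $vw\in M_2$. You instead go straight to $(1)\Rightarrow(2)$ via the symmetric difference: agreement off $w$ puts $M_1\triangle M_2$ inside the star at $w$, equal sizes and $M_1\neq M_2$ force exactly one swapped edge on each side, and the dangling endpoints are identified with the unique unmatched vertices $u$ and $v$. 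This is a somewhat cleaner and more standard matching-theoretic argument, while the paper's counting version produces statement $(3)$ directly; the remaining implications are routine bookkeeping in both treatments. Two small points worth noting: the equivalence genuinely needs $M_1\neq M_2$ (otherwise $(1)$ holds vacuously while $(2)$ and $(3)$ fail), which you state up front and the paper only invokes implicitly mid-proof ("by the distinctness of $M_1$ and $M_2$"); and in your step $(2)\Rightarrow(3)$ the cardinality argument for $vw\in M_2$ tacitly uses $uw\notin M_2$, which holds because $u$ is unmatched in $M_2$ and could be said explicitly.
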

\begin{proof}
    The desired equivalence follows from three separate implications. 
     
     (\ref{item:adj1})$\implies$(\ref{item:adj3}). Suppose $G\zap{M_1}-w=G\zap{M_2}-w$. Then certainly $G\zap{M_1}-u-v-w=G\zap{M_2}-u-v-w$. This implies that any edges in $M_1$, but not in $M_2$, must be between the vertices $u,v,w$. Since only one such edge can be present in any valid matching, there must be $k-1$ edges present in $G\zap{M_1}-u-v-w$. Since $G\zap{M_1}-u-v-w$ contains only $2k-2$ vertices, there must not be any edges in $M_1$ or $M_2$ with one endpoint in $\{u,v,w\}$ and another in $G\setminus \{u,v,w\}$. Therefore, both $M_1$ and $M_2$ consist of $k-1$ common edges with both endpoints in $G\setminus \{u,v,w\}$, and one edge each in the set $\{uv,uw,vw\}$. If $uv\in M_1$, then $G\zap{M_1}-w$ has $k$ edges, which would fix $M_1=M_2$, a contradiction. Therefore, by the distinctness of $M_1$ and $M_2$, we must have that $uw\in M_1$ and $vw\in M_2$, up to relabeling.
        
         (\ref{item:adj3})$\implies$(\ref{item:adj2}). Suppose that $uw\in M_1$, $vw\in M_2$, and $G\zap{M_1}-u-v-w=G\zap{M_2}-u-v-w$. This means that $M_1=E(G\zap{M_1}-u-v-w)\cup\{uw\}$ and $M_2=E(G\zap{M_2}-u-v-w)\cup\{vw\}$. Therefore, $(M_2\setminus\{vw\})\cup\{uw\}=E(G\zap{M_2}-u-v-w)\cup\{uw\}=M_1$.
        
         (\ref{item:adj2})$\implies$(\ref{item:adj1}). Suppose $M_1=(M_2\setminus\{vw\})\cup\{uw\}$, then $M_1\setminus \{uw\}=M_2\setminus\{vw\}$. This implies that $G\zap{M_1}-w=G\zap{M_1\setminus \{uw\}}-w=G\zap{M_2\setminus\{vw\}}-w=G\zap{M_2}-w$, as desired.
\end{proof}

%As a concrete illustration, we show that every cycle is a Bell coloring graph. 
We are now ready to prove that every odd cycle is a Bell coloring graph. Figure~\ref{fig:odd-cycle} illustrates the proof of Lemma~\ref{lem:odd-cycles}, which establishes that the near-perfect matching graph of an odd cycle is the odd cycle and, due to Proposition~\ref{prop:matchings}, is realizable as a Bell coloring graph.

\begin{lemma}\label{lem:odd-cycles}
Every odd cycle is a Bell coloring graph. In particular, $\B_{k+1}(\overline{C_{2k+1}})\cong \mathcal M_k(C_{2k+1})\cong C_{2k+1}$ for any $k\ge 1$.
\end{lemma}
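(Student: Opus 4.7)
The plan is to address the two isomorphisms separately, with the bulk of the work concentrated on showing that the matching graph itself is a cycle.

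The first isomorphism $\mathcal B_{k+1}(\overline{C_{2k+1}})\cong\mathcal M_k(C_{2k+1})$ is immediate from Proposition~\ref{prop:matchings}: for $k\ge 2$ the odd cycle $C_{2k+1}$ is triangle-free, so with $n=2k+1$ the proposition yields $\mathcal M_k(C_{2k+1})\cong\mathcal B_{n-k}(\overline{C_{2k+1}})=\mathcal B_{k+1}(\overline{C_{2k+1}})$. The tiny case $k=1$ can be checked by direct inspection.

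For the core claim $\mathcal M_k(C_{2k+1})\cong C_{2k+1}$, I would first enumerate the vertices. Label the cycle cyclically as $v_0,v_1,\ldots,v_{2k}$ with indices read mod $2k+1$. The maximum matching size of $C_{2k+1}$ is $k$, so every matching of size at least $k$ is a near-perfect matching. For each $i$, removing $v_i$ leaves a path on $2k$ vertices with a unique perfect matching, giving $M_i=\{v_{i+1}v_{i+2},v_{i+3}v_{i+4},\ldots,v_{i-2}v_{i-1}\}$. Hence $\mathcal M_k(C_{2k+1})$ has exactly $2k+1$ vertices, bijectively indexed by their unique unmatched vertex.

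The key step is the adjacency analysis via Lemma~\ref{lem:dist-2}(\ref{item:adj3}). Applied to $M_i$ and $M_j$ (with $v_i,v_j$ unmatched, respectively), adjacency is equivalent to the existence of some $w\in V(C_{2k+1})$ with $v_jw\in M_i$ and $v_iw\in M_j$. The first condition forces $w$ to be the $M_i$-partner of $v_j$, so $w\in\{v_{j-1},v_{j+1}\}$; the second condition then forces $v_i$ to be the $M_j$-partner of $w$, yielding $i=j\pm 2\pmod{2k+1}$. Conversely, taking $w=v_{i+1}$ witnesses $M_i\sim M_{i+2}$, since the rigidity of the cycle ensures $M_i$ and $M_{i+2}$ agree everywhere off $\{v_i,v_{i+1},v_{i+2}\}$.

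Therefore, adjacency on $\{M_0,\ldots,M_{2k}\}$ is exactly the ``step-by-$2$'' relation $M_i\sim M_{i+2}$. Since $\gcd(2,2k+1)=1$, iterating $i\mapsto i+2$ visits all $2k+1$ residues in a single cycle, giving $\mathcal M_k(C_{2k+1})\cong C_{2k+1}$. The main obstacle is the adjacency analysis above, which is a short but careful case check made possible by Lemma~\ref{lem:dist-2}; everything else (counting vertices, and passing from a step-by-$2$ circulant to a cycle) is elementary.
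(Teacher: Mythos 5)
Your proof is correct and follows essentially the same route as the paper: label each near-perfect matching $M_i$ by its unique unmatched vertex, invoke Lemma~\ref{lem:dist-2} to show adjacency is exactly the step-by-$2$ relation $M_i\sim M_{i\pm 2}$, and conclude via $\gcd(2,2k+1)=1$. You are slightly more explicit than the paper in running the ``only if'' direction of the adjacency analysis (via Lemma~\ref{lem:dist-2}(\ref{item:adj3})), and more careful in flagging that Proposition~\ref{prop:matchings} requires triangle-freeness and hence does not directly cover $k=1$; note, however, that $C_3$ is handled elsewhere in the paper as $\B_3(K_3\sqcup K_1)\cong K_3$, and the ``in particular'' identification $\B_2(\overline{C_3})\cong\mathcal M_1(C_3)$ does not in fact hold, so ``direct inspection'' would not rescue that edge case either.
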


\begin{proof}
Consider the near-perfect matching graph $\mathcal{M}_k(C_{n})$ for $n=2k+1$. If we select a vertex $v$ to be the isolated vertex, then the remaining graph is the even path graph $P_{2k}$. There is only one possible  set of $k$ edges of $P_{2k}$ that remains vertex-disjoint. Therefore, we can label a given vertex $M_v$ of $\mathcal{M}_k(C_n)$ according to the vertex $v$ which is unmatched. By Lemma~\ref{lem:dist-2}, $M_u$ can only be adjacent to $M_v$ if $u$ and $v$ share a neighbor, $w$. If this is the case, then $M_u=(M_v\setminus\{uw\})\cup\{vw\}$, so $M_u$ and $M_v$ will indeed be adjacent whenever $u$ and $v$ share a neighbor. If we label the vertices of $C_n$ as $v_1,\dots,v_n$, then we will have adjacencies $M_{v_i}M_{v_{i+2}}$ where $i+2$ is taken modulo $n$. Since $2$ is coprime to $n$, this generates $C_n$. Therefore $\mathcal{M}_k(C_{2k+1})\cong C_{2k+1}$.
\end{proof}

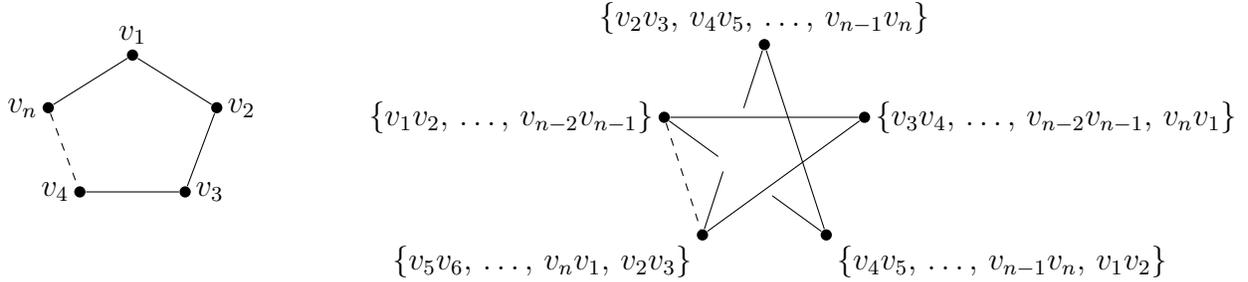
\begin{figure}[h]
\centering
\begin{tikzpicture}[scale=0.7]
  \tikzset{
    vertex/.style={circle,fill=black,inner sep=1.5pt},
  }

  \node[vertex] (v1) at (0,1.8) {};
  \node[vertex] (v2) at (1.6,0.8) {};
  \node[vertex] (v3) at (1.0,-0.8) {};
  \node[vertex] (v4) at (-1.0,-0.8) {};
  \node[vertex] (vn) at (-1.6,0.8) {};

  \node[above]  at (v1) {$v_1$};
  \node[right]  at (v2) {$v_2$};
  \node[right]  at (v3) {$v_3$};
  \node[left]   at (v4) {$v_4$};
  \node[left]   at (vn) {$v_n$};

  \draw (v1)--(v2)--(v3)--(v4);
  \draw (v1)--(vn);
  \draw[dashed] (vn)--(v4);

  \begin{scope}[xshift=12cm]
    \node[vertex] (b) at (90:2)   {}; 
    \node[vertex] (c) at (18:2)   {};   
    \node[vertex] (d) at (-54:2)  {};  
    \node[vertex] (e) at (-126:2) {};  
    \node[vertex] (a) at (162:2)  {}; 
    \draw (a)--(c);
    \draw (c)--(e);
    \draw (e) -- ($(e)!1/3!(b)$);
    \draw ($(e)!2/3!(b)$) -- (b);
    \draw (b)--(d);
    \draw (d) -- ($(d)!1/3!(a)$);
    \draw ($(d)!2/3!(a)$) -- (a);

    \draw[dashed] (a)--(e);

    \node[above]      at (b)
      {$\{v_2v_3,\,v_4v_5,\,\ldots,\,v_{n-1}v_n\}$};

    \node[left]       at (a)
      {$\{v_1v_2,\,\ldots,\,v_{n-2}v_{n-1}\}$};

    \node[right]      at (c)
      {$\{v_3v_4,\,\ldots,\,v_{n-2}v_{n-1},\,v_nv_1\}$};

    \node[below right] at (d)
      {$\{v_4v_5,\,\ldots,\,v_{n-1}v_n,\,v_1v_2\}$};

    \node[below left] at (e)
      {$\{v_5v_6,\,\ldots,\,v_nv_1,\,v_2v_3\}$};
  \end{scope}
\end{tikzpicture}
\caption{Vertex labeling for an odd cycle $G=C_{2k+1}$ and its corresponding near-perfect matching graph illustrating that $\mathcal M_k(C_{2k+1})\cong C_{2k+1}$.}
\label{fig:odd-cycle}
\end{figure}

We are now ready to show that every even cycle is also realizable as a Bell coloring graph. To do this, we define a simple family of graphs that each consists of an even cycle with an ear of length two attached to two cycle vertices with a common neighbor. The $n=6$ example is depicted in Figure~\ref{fig:G6}.

\begin{defn}\label{def:ear-graph}
For $n=2k,k\ge 2$, we let $G_n$ be the graph on $n+1$ vertices with $V(G_n)=\{u,w,v_1,\dots,v_{n-1}\}$ and $E(G_n)=\{v_iv_{i+1}\mid 1\le i\le n-2\}\cup \{uv_1,wv_1,uv_{n-1},wv_{n-1}\}$.     
\end{defn} 

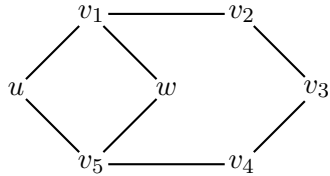
\begin{figure}[h]
    \centering
    \begin{tikzpicture}[
    thick,
    vertex/.style={fill=white, inner sep=1pt, font=\small},
    highlight/.style={line width=7pt, orange!40, cap=round, on background layer},
    graph label/.style={scale=1, anchor=west},
    scale=1 ,rotate=90]
]
      \node[vertex] (0) at (0, 2) {$u$};
      \node[vertex] (1) at (0, 0) {$w$};
      \node[vertex] (2) at (0, -2) {$v_3$};
      \node[vertex] (3) at (1, -1) {$v_2$};
      \node[vertex] (4) at (1, 1) {$v_1$};
      \node[vertex] (5) at (-1, 1) {$v_5$};
      \node[vertex] (6) at (-1, -1) {$v_4$};
      \draw[edge] (3) -- (2);
      \draw[edge] (4) -- (0);
      \draw[edge] (4) -- (1);
      \draw[edge] (4) -- (3);
      \draw[edge] (5) -- (0);
      \draw[edge] (5) -- (1);
      \draw[edge] (6) -- (2);
      \draw[edge] (6) -- (5);
    \end{tikzpicture}
    \caption{The graph $G_6$ as in Definition~\ref{def:ear-graph}. Lemma~\ref{lem:even-cycles} shows that, in particular, the near-perfect matching graph of $G_6$ is $C_8$.}
    \label{fig:G6}
\end{figure}

\begin{lemma}\label{lem:even-cycles}
    Every even cycle is a Bell coloring graph. In particular, $\mathcal M_{k}(G_{2k})\cong C_{2k+2}$ for $k\ge 2$ and $G_{2k}$ as in Definition~\ref{def:ear-graph}.
\end{lemma}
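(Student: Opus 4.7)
The plan is to establish $\mathcal{M}_k(G_{2k})\cong C_{2k+2}$ directly and then invoke Proposition~\ref{prop:matchings} to realize $C_{2k+2}$ as a Bell coloring graph. The hypothesis of that proposition holds because $G_{2k}$ is triangle-free: $u$ and $w$ are nonadjacent with common neighborhood $\{v_1,v_{2k-1}\}$, and $v_1v_{2k-1}\notin E(G_{2k})$ for $k\ge 2$, so no triangle can involve $u$ or $w$; the remaining induced subgraph is the path $v_1v_2\cdots v_{2k-1}$, which is also triangle-free. Since $|V(G_{2k})|=2k+1$, Proposition~\ref{prop:matchings} then gives $\mathcal{M}_k(G_{2k})\cong \B_{k+1}(\overline{G_{2k}})$, so it suffices to prove $\mathcal{M}_k(G_{2k})\cong C_{2k+2}$.

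I first enumerate the near-perfect matchings of $G_{2k}$ by cases on the unmatched vertex. Since $N(u)=N(w)=\{v_1,v_{2k-1}\}$, if $u$ is unmatched then $w$ must be matched to $v_1$ or $v_{2k-1}$; each choice uniquely extends to a perfect matching of the remaining even path, producing two matchings $A_1, A_2$. Symmetrically, $w$ unmatched produces $B_1, B_2$. If some $v_j$ is unmatched, then $\{u,w\}$ must be bijectively matched to $\{v_1,v_{2k-1}\}$, and the two path fragments $v_2\cdots v_{j-1}$ and $v_{j+1}\cdots v_{2k-2}$ must each admit a perfect matching; a parity check forces $j\in\{2,4,\ldots,2k-2\}$. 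For each such $j$, the two possible orderings of $\{u,w\}$ on $\{v_1,v_{2k-1}\}$ yield matchings $M_j^1$ (containing $uv_1, wv_{2k-1}$) and $M_j^2$ (containing $uv_{2k-1}, wv_1$). The total count is $4+2(k-1)=2k+2$.

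Next I determine adjacencies via Lemma~\ref{lem:dist-2}: two near-perfect matchings are adjacent iff a single edge swap at a vertex $z$ takes one to the other, and $z$ must be a neighbor of the unmatched vertex of at least one of the two matchings. For $A_1$ the candidates are $z\in N(u)=\{v_1, v_{2k-1}\}$, yielding $A_1\sim B_1$ and $A_1\sim M_{2k-2}^2$; parallel checks at $A_2, B_1, B_2$ give the remaining edges incident to these four matchings. For an interior matching $M_j^i$, the candidates are $z\in N(v_j)=\{v_{j-1}, v_{j+1}\}$, producing the chain adjacencies $M_j^i \sim M_{j\pm 2}^i$ where valid, with boundaries $j\in\{2, 2k-2\}$ linking back to the four corner matchings. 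Splicing these produces the closed walk
\[
A_2 \sim M_2^1 \sim M_4^1 \sim \cdots \sim M_{2k-2}^1 \sim B_1 \sim A_1 \sim M_{2k-2}^2 \sim \cdots \sim M_2^2 \sim B_2 \sim A_2
\]
of length $2k+2$.

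The main obstacle is ruling out spurious adjacencies so that every vertex has degree exactly $2$. Three exclusions suffice: matchings sharing an unmatched vertex are never adjacent, since condition~(3) of Lemma~\ref{lem:dist-2} requires the two unmatched vertices to differ; matchings in the $M^1$ family differ from those in the $M^2$ family at both $v_1$ and $v_{2k-1}$ in their $u$- and $w$-edges, so no single-vertex swap can reconcile them; and the remaining candidate pairs among $\{A_1, A_2, B_1, B_2\}$ (such as $A_1, B_2$) fail direct swap inspection. Once these exclusions are established, $\mathcal{M}_k(G_{2k})$ is $2$-regular and therefore a disjoint union of cycles, and the explicit closed walk above exhausts all $2k+2$ vertices, giving $\mathcal{M}_k(G_{2k})\cong C_{2k+2}$.
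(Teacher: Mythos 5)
Your argument for $\mathcal{M}_k(G_{2k})\cong C_{2k+2}$ ($k\ge 2$) follows essentially the same route as the paper: enumerate near-perfect matchings by unmatched vertex (using the parity obstruction to show only $u$, $w$, and even-indexed $v_j$ can be unmatched), apply Lemma~\ref{lem:dist-2}, and verify the cycle. Your handling of the degree bound is in fact slightly cleaner than the paper's case-by-case nonadjacency checks, since each matching's unmatched vertex has exactly two neighbors, so each vertex of $\mathcal{M}_k(G_{2k})$ has at most two candidate swaps. One organizational note: the claim that ``three exclusions suffice'' to rule out spurious adjacencies is misleading as stated, since none of the three listed exclusions addresses pairs like $M_2^1$ and $M_6^1$; what actually closes this off is the candidate enumeration you already did in the preceding paragraph (swap vertex must lie in $N$ of the unmatched vertex), which makes the exclusion paragraph redundant.

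There is, however, a genuine gap: the lemma's first sentence claims \emph{every} even cycle is a Bell coloring graph, but $C_4$ is not of the form $C_{2k+2}$ for $k\ge 2$, and $G_4$ would require $k=2$ giving $C_6$, so the family $\{G_{2k}\}_{k\ge2}$ does not produce $C_4$. The paper resolves this with a separate one-line observation, $\B_2(K_2\sqcup\overline{K_2})\cong\mathcal{C}_2(\overline{K_2})\cong C_4$ (via the fact that every standard coloring graph is a Bell coloring graph). Your proof never addresses $C_4$, so the first assertion of the lemma remains unproven as written; you need to add this base case.
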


\begin{proof} We first observe that the special case of $C_4$ is realized by $\B_2(K_2\sqcup \overline{K_2})\cong \mathcal{C}_2(\overline{K_2})\cong C_4$, an application of the constructive result that any coloring graph is realizable as a Bell coloring graph \cite{FM14}*{Proposition 2.5}. It suffices to show that $\mathcal M_{n/2}(G_{n})\cong C_{n+2}$ for all even $n\ge 2$ and $G_n$ as in Definition~\ref{def:ear-graph}. 

We start by enumerating the vertices of $\mathcal M_{n/2}(G_{n})$ for even $n\ge 2$. Each $n/2$-matching must have a unique unmatched vertex. For $i=1,3,\dots,n-1$, we note that $G_n-v_i$ has a unique bipartition with parts of size $n/2+1$ and $n/2-1$, so $v_i$ cannot be the uniquely unmatched vertex. Any other $v$ is such that $G-v$ has exactly two perfect matchings, so the $n+2$ vertices of $\mathcal M_{n/2}(G_{n})$ can be described as follows:
\begin{align*}
    V(\mathcal M_{n/2}(G_{n})) &= \{M_u,M_u',M_w,M_w'\}\cup \bigcup_{i=2,4,\dots,n-2}\{M_i, M_i'\}\text{ for } \\
    M_u &\colonequals \{v_1v_2,\dots,v_{n-3}v_{n-2},v_{n-1}w\}, \\
    M_u' &\colonequals \{wv_1,v_2v_3,\dots,v_{n-2}v_{n-1}\}, \\
    M_w &\colonequals \{uv_1,v_2v_3,\dots,v_{n-2}v_{n-1}\}, \\
    M_w' &\colonequals \{v_1v_2,\dots,v_{n-3}v_{n-2},v_{n-1}u\}, \\
    M_i &\colonequals \{uv_1,v_2v_3,\dots,v_{i-2}v_{i-1},v_{i+1}v_{i+2},\dots,v_{n-2}v_{n-1},v_{n-1}w\}\text{ for $i=2,4,\dots,n-2$,} \\
    M_i' &\colonequals \{wv_1,v_2v_3,\dots,v_{i-2}v_{i-1},v_{i+1}v_{i+2},\dots,v_{n-2}v_{n-1},v_{n-1}u\}\text{ for $i=2,4,\dots,n-2$.}
\end{align*}
Some of these matchings are depicted in Figure~\ref{fig:matchings-even-cycle}.

%\begin{figure}[ht]
%    \centering
%    \includegraphics[width=0.5\linewidth]{Older versions/Hand drawn pictures/Evencycle.png}
%    \caption{Several matchings of $G_n$ and depiction of $\mathcal M_{n/2}(G_n)\cong C_{n+2}$ \sk{Probably better to put the matchings (use your judgement about how many to depict / how to draw the matched lines (probably just thicker)) separate from the vertex-labeled cycle, but up to you}}
%    \label{fig:placeholder}
%\end{figure}

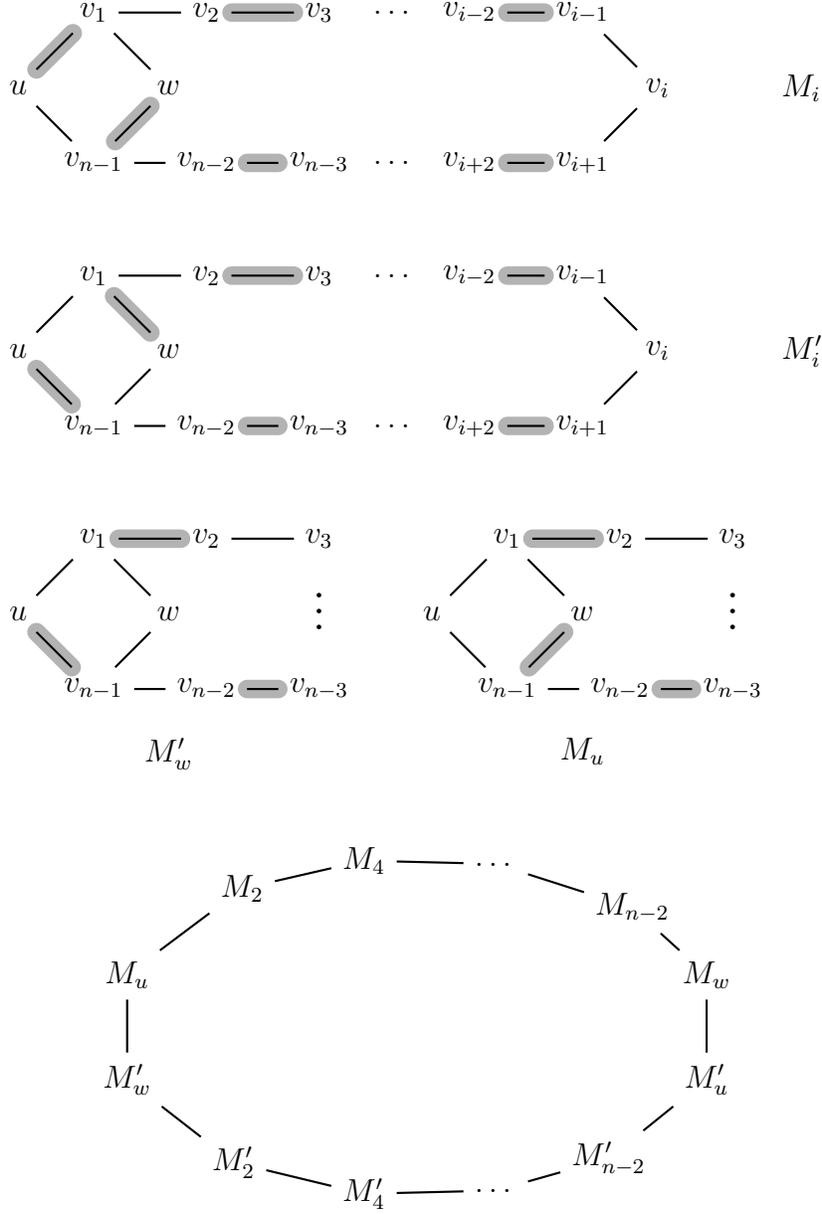
\begin{figure}
\centering 
\begin{tikzpicture}[
    thick,
    vertex/.style={fill=white, inner sep=1pt, font=\small},
    highlight/.style={line width=7pt, black!30, cap=round, on background layer},
    graph label/.style={scale=1, anchor=west},
    scale=1 
]

    \begin{scope}[shift={(0,0)}]
        \node (u) at (0,0) {$u$};
        \node (v1) at (1,1) {$v_1$};
        \node (vn1) at (1,-1) {$v_{n-1}$};
        \node (w) at (2,0) {$w$};
        \node (v2) at (2.5,1) {$v_2$};
        \node (v3) at (4,1) {$v_3$};
        \node (dots_top) at (5,1) {$\dots$};
        \node (vi-2) at (6,1) {$v_{i-2}$};
        \node (vi-1) at (7.5,1) {$v_{i-1}$};
        \node (vi) at (8.5,0) {$v_i$};
        \node (vn2) at (2.5,-1) {$v_{n-2}$};
        \node (vn3) at (4,-1) {$v_{n-3}$};
        \node (dots_bot) at (5,-1) {$\dots$};
        \node (vi+2) at (6,-1) {$v_{i+2}$};
        \node (vi+1) at (7.5,-1) {$v_{i+1}$};

        \draw[highlight] (u) -- (v1);
        \draw[highlight] (w) -- (vn1);
        \draw[highlight] (v2) -- (v3);
        \draw[highlight] (vn2) -- (vn3);
        \draw[highlight] (vi-2) -- (vi-1);
        \draw[highlight] (vi+2) -- (vi+1);

        \draw (u) -- (v1) -- (w) -- (vn1) -- (u);
        \draw (v1) -- (v2) -- (v3);
        \draw (vi-2) -- (vi-1) -- (vi);
        \draw (vn1) -- (vn2) -- (vn3);
        \draw (vi+2) -- (vi+1) -- (vi);
        
        \node[graph label] at (10,0) {$M_i$};
    \end{scope}

    \begin{scope}[shift={(0,-3.5)}]
        \node (u) at (0,0) {$u$};
        \node (v1) at (1,1) {$v_1$};
        \node (vn1) at (1,-1) {$v_{n-1}$};
        \node (w) at (2,0) {$w$};
        \node (v2) at (2.5,1) {$v_2$};
        \node (v3) at (4,1) {$v_3$};
        \node (dots_top) at (5,1) {$\dots$};
        \node (vi-2) at (6,1) {$v_{i-2}$};
        \node (vi-1) at (7.5,1) {$v_{i-1}$};
        \node (vi) at (8.5,0) {$v_i$};
        \node (vn2) at (2.5,-1) {$v_{n-2}$};
        \node (vn3) at (4,-1) {$v_{n-3}$};
        \node (dots_bot) at (5,-1) {$\dots$};
        \node (vi+2) at (6,-1) {$v_{i+2}$};
        \node (vi+1) at (7.5,-1) {$v_{i+1}$};

        \draw[highlight] (u) -- (vn1);
        \draw[highlight] (v1) -- (w);
        \draw[highlight] (v2) -- (v3);
        \draw[highlight] (vn2) -- (vn3);
        \draw[highlight] (vi-2) -- (vi-1);
        \draw[highlight] (vi+2) -- (vi+1);

        \draw (u) -- (v1) -- (w) -- (vn1) -- (u);
        \draw (v1) -- (v2) -- (v3);
        \draw (vi-2) -- (vi-1) -- (vi);
        \draw (vn1) -- (vn2) -- (vn3);
        \draw (vi+2) -- (vi+1) -- (vi);

        \node[graph label] at (10,0) {$M_i'$};
    \end{scope}
    
    \begin{scope}[shift={(0,-7.0)}]
        \node (u) at (0,0) {$u$};
        \node (v1) at (1,1) {$v_1$};
        \node (vn1) at (1,-1) {$v_{n-1}$};
        \node (w) at (2,0) {$w$};
        \node (v2) at (2.5,1) {$v_2$};
        \node (v3) at (4,1) {$v_3$};
        \node (vn2) at (2.5,-1) {$v_{n-2}$};
        \node (vn3) at (4,-1) {$v_{n-3}$};
        \node[scale=1.5] at (4,0.2) {$\vdots$};

        \draw[highlight] (u) -- (vn1);
        \draw[highlight] (v1) -- (v2);
        \draw[highlight] (vn2) -- (vn3);

        \draw (u) -- (v1) -- (w) -- (vn1) -- (u);
        \draw (v1) -- (v2) -- (v3);
        \draw (vn1) -- (vn2) -- (vn3);
        
        \node[anchor=north] at (2,-1.5) {$M_w'$};
    \end{scope}

    \begin{scope}[shift={(5.5,-7.0)}]
        \node (u) at (0,0) {$u$};
        \node (v1) at (1,1) {$v_1$};
        \node (vn1) at (1,-1) {$v_{n-1}$};
        \node (w) at (2,0) {$w$};
        \node (v2) at (2.5,1) {$v_2$};
        \node (v3) at (4,1) {$v_3$};
        \node (vn2) at (2.5,-1) {$v_{n-2}$};
        \node (vn3) at (4,-1) {$v_{n-3}$};
        \node[scale=1.5] at (4,0.2) {$\vdots$};

        \draw[highlight] (v1) -- (v2);
        \draw[highlight] (w) -- (vn1);
        \draw[highlight] (vn2) -- (vn3);

        \draw (u) -- (v1) -- (w) -- (vn1) -- (u);
        \draw (v1) -- (v2) -- (v3);
        \draw (vn1) -- (vn2) -- (vn3);
        
        \node[anchor=north] at (2,-1.5) {$M_u$};
    \end{scope}
    
    \begin{scope}[shift={(5.3, -12.5)}, scale=0.9]
        \def\Rx{4.5} 
        \def\Ry{2.5}

        \node (Mu)       at (162:\Rx cm and \Ry cm) {$M_u$};
        \node (Mw_prime) at (198:\Rx cm and \Ry cm) {$M_w'$};
        \draw[thick] (Mu) -- (Mw_prime);

        \node (Mw)       at (18:\Rx cm and \Ry cm)  {$M_w$};
        \node (Mu_prime) at (-18:\Rx cm and \Ry cm) {$M_u'$};
        \draw[thick] (Mw) -- (Mu_prime);

        \node (M2)       at (125:\Rx cm and \Ry cm) {$M_2$};
        \node (M4)       at (100:\Rx cm and \Ry cm) {$M_4$};
        \node (dots_top) at (75:\Rx cm and \Ry cm)  {$\dots$};
        \node (Mn-2)     at (45:\Rx cm and \Ry cm)  {$M_{n-2}$};

        \node (M2_prime) at (-127:\Rx cm and \Ry cm) {$M_2'$};
        \node (M4_prime)   at (-100:\Rx cm and \Ry cm) {$M_4'$};
        \node (dots_bot)   at (-75:\Rx cm and \Ry cm)  {$\dots$};
        \node (Mn-2_prime)   at (-51:\Rx cm and \Ry cm)  {$M_{n-2}'$};

        \draw[thick] (Mu) -- (M2) -- (M4) -- (dots_top) -- (Mn-2) -- (Mw);
        \draw[thick] (Mu_prime) -- (Mn-2_prime) -- (dots_bot) -- (M4_prime) -- (M2_prime) -- (Mw_prime);
    \end{scope}

\end{tikzpicture}
\caption{Several matchings of $G_n$ and depiction of $\mathcal M_{n/2}(G_n)\cong C_{n+2}$.} \label{fig:matchings-even-cycle}
\end{figure}

Next we use Lemma~\ref{lem:dist-2} to argue that the adjacencies exactly constitute the $(n+2)$-cycle with vertices ordered as $M_u,M_2,\dots,M_{n-2},M_w,M_u',M_{n-2}',\dots,M_2',M_w'$. First, we enumerate the edges that are present. Each $M_i$ is adjacent to $M_{i+2}$ and each $M_i'$ is adjacent to $M_{i+2}'$ because they differ only on the edge matching $v_{i+1}$. We directly observe that 
%By Lemma~\ref{lem:dist-2}, $M_i$ cannot be adjacent to $M_j'$ for any $i,j$, as even when $j=i+2$, $G[M_i]-v_i-v_{i+1}-v_{i+2}\ne G[M_{i+2}']-v_i-v_{i+1}-v_{i+2}$. However, $M_i$ is adjacent to $M_{i+2}$, because $G[M_i]=(G[M_{i+2}]\setminus\{v_iv_{i+1}\})\cup\{v_{i+1}v_{i+2}\}$. 
$G_n\zap{M_u}-v_{n-1}=G_n\zap{M_w'}-v_{n-1}$, $G_n\zap{M_u'}-v_1=G_n\zap{M_w}-v_1$, $G_n\zap{M_u}-v_1=G_n\zap{M_2}-v_1$, $G_n\zap{M_u'}-v_{n-1}=G_n\zap{M_{n-2}'}-v_{n-1}$, $G_n\zap{M_w}-v_{n-1}=G_n\zap{M_{n-2}}-v_{n-1}$, and $G_n\zap{M_w'}-v_1=G_n\zap{M_2'}-v_1$, establishing the additional $6$ edges. 

To check the nonadjacencies, observe that no $M_i$ and $M_j'$ can be adjacent, because even when $v_i$ and $v_j$ have a common neighbor, $M_i$ and $M_j'$ differ on which vertices are matched with $u$ and $w$. Since the unmatched vertex is the same in both $M_u$ and $M_u'$, they cannot be adjacent, and by symmetry neither can $M_w$ and $M_w'$. By direct inspection of both common neighbors of $u$ and $w$, since $G_n\zap{M_u}-v_1\ne G_n\zap{M_w}-v_1$ and $G_n\zap{M_u}-v_{n-1}\ne G_n\zap{M_w}-v_{n-1}$, $M_u$ and $M_w$ are not adjacent. By a symmetric argument, neither are $M_u'$ and $M_w'$. %Similarly, since $G[M_u']-v_1\ne G[M_w']-v_1$ and $G[M_u']-v_{n-1}\ne G[M_w']-v_{n-1}$, $M_u'$ and $M_w'$ are not adjacent. 
We can check that $M_u$ has no neighbors other than $M_2$ and $M_{w'}$ by considering the only other matchings with an isolated vertex that shares a neighbor with $u$, which  are $M_2'$, $M_{n-2}$, $M_{n-2}'$, and $M_w$. We have just checked that $M_w$ is not adjacent, and the others are not either, as $G_n\zap{M_u}-v_1\ne G_n\zap{M_2'}-v_1$, $G_n\zap{M_u}-v_{n-1}\ne G_n\zap{M_{n-2}}-v_{n-1}$, and $G_n\zap{M_u}-v_{n-1}\ne G_n\zap{M_{n-2}'}-v_{n-1}$. Symmetric arguments establish that $M_u'$, $M_w$, and $M_w'$ also have no other neighbors in $\mathcal M_{n/2}(G_n)$, establishing that $\mathcal{M}_{n/2}(G_n)\cong C_{n+2}$ as needed. %Similarly, since $G[M_u']-v_1\ne G[M_2]-v_1$, $G[M_u']-v_1\ne G[M_2']-v_1$, and $G[M_u']-v_{n-1}\ne G[M_{n-2}]-v_{n-1}$, $M_u'$ is not adjacent to any matchings other than $M_w$ and $M_{n-2}'$. By the same logic, $M_w$ is adjacent only to $M_u'$ and $M_{n-2}$, and $M_w'$ is adjacent only to $M_u$ and $M_2'$. Therefore, by ordering the matchings as $M_u,M_2,\dots,M_{n-2},M_w,M_u',M_{n-2}',\dots,M_2',M_w'$, each matching is adjacent to exactly the antecedent and subsequent matchings in the ordering, which defines the edges in the cycle $C_{n+2}$ 
\end{proof}

%\note{The graph $G$, whose matching graph is $C_6$ can be identified as $K_{3,2}$. As this is triangle-free, we can expect that $\B_3(\Gbar)=C_6$. Indeed, $\Gbar=K_3\oplus K_2$, so this gives rise to the same base graph as the method for constructing Bell coloring graphs from coloring graphs}

Having shown that all cycles are realizable as near-perfect matching graphs and hence Bell coloring graphs, we turn to the task of realizing all trees as Bell coloring graphs. To do this, we first describe a procedure for joining near-perfect matching graphs to create larger near-perfect matching graphs. This joining construction relies on a particular property of odd-order graphs. For a graph $G$ on $n=2k+1$ vertices, a vertex $v$ is \emph{uniquely unmatched} if the graph $G\setminus v$ has a unique perfect matching. For two such graphs $H_1$ and $H_2$, Figure~\ref{fig:npm} depicts a near-perfect matching of these graphs joined on their uniquely unmatched vertices. We show that the near-perfect matching graph of this joined graph is exactly the joined near-perfect matching graphs of $H_1$ and $H_2$.

%\begin{figure}[ht]
%    \centering
%    \includegraphics[width=0.5\linewidth]{Older versions/Hand drawn pictures/Npm.png}
%
%    \label{fig:npm}
%\end{figure}

\begin{figure}
\centering
\begin{tikzpicture}[scale=0.8,
    thick,
    vertex/.style={circle, fill=black, inner sep=1.5pt},
    matching/.style={line width=2pt}
]
    \node[vertex] (a) at (0, 3.0) {};
    \node[vertex] (b) at (2.0, 3.0) {};
    \node[vertex] (c) at (5.0, 3.0) {};
    \node[vertex] (d) at (7.0, 3.0) {};

    \node[scale=1.5] at (1.0, 2.0) {$\vdots$};
    \node[scale=1.5] at (6.0, 2.0) {$\vdots$};

    \node[vertex] (e) at (0, 0.7) {};
    \node[vertex] (f) at (2.0, 0.7) {};
    \node[vertex] (g) at (5.0, 0.7) {};
    \node[vertex] (h) at (7.0, 0.7) {};

    \node[vertex] (u) at (0,-.2) {};
    \node[vertex] (w1) at (2.0, -0.2) {};
    \node[vertex] (w2) at (3.5, 0.7) {};
    \node[vertex] (i) at (5.0, -0.2) {};
    \node[vertex] (j) at (7.0, -0.2) {};

    \draw[matching] (a) -- (b);
    \draw[matching] (c) -- (d);
    \draw[matching] (e) -- (f);
    \draw[matching] (g) -- (h);
    \draw[matching] (i) -- (j);
    \draw[matching] (w1) -- (w2);
    \node at (3.5, 1.1) {$v_1 \sim v_2$};

    \draw[decorate, decoration={brace, amplitude=10pt, mirror}] (-0.5, -0.7) -- (4.0, -0.7) node[midway, below=10pt, font=\large] {$H_1$};
    \draw[decorate, decoration={brace, amplitude=10pt, mirror}] (3.0, -1.2) -- (7.5, -1.2) node[midway, below=10pt, font=\large] {$H_2$};
\end{tikzpicture}
\caption{Vertices of the graph $G=(H_1\sqcup H_2)/(v_1\sim v_2$) where $H_1$ and $H_2$ have uniquely unmatched vertices $v_1$ and $v_2$, respectively, and a near-perfect matching of $G$ corresponding to a vertex of Type \eqref{item:H1} in $\mathcal L$, the near-perfect matching graphs of $H_1$ and $H_2$ joined on their unique matchings with $v_1$ and $v_2$, respectively, unmatched.} \label{fig:npm}
\end{figure}
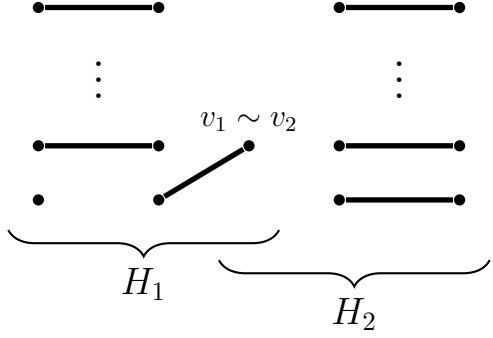

\begin{prop}\label{prop:near-perfect-matchings} 
For $i=1,2$, let $H_i$ be a graph with uniquely unmatched vertex $v_i$, let $k_i=(\abs{V(H_i)}-1)/2$, and let $M_{v_i}\in V(\mathcal M_{k_i}(H_i))$ be the unique near-perfect matching of $H_i$ with $v_i$ unmatched. Then 
\[
\mathcal M_{k_1+k_2}((H_1\sqcup H_2)/(v_1\sim v_2))\cong (\mathcal M_{k_1}(H_1) \sqcup \mathcal M_{k_2}(H_2))/(M_{v_1}\sim M_{v_2}),
\]
where the left-hand side is the near-perfect matching graph of a graph obtained by joining $H_1$ and $H_2$ on their uniquely unmatched vertices, and the right-hand side is a graph obtained by joining the near-perfect matching graphs of $H_1$ and $H_2$ on the matchings corresponding to the uniquely unmatched vertices. 
\end{prop}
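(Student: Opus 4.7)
The plan is to construct an explicit vertex bijection $\Phi$ and verify that it preserves adjacency using Lemma~\ref{lem:dist-2}. First I would classify the near-perfect matchings of the amalgamation $G=(H_1\sqcup H_2)/(v_1\sim v_2)$. Since $|V(G)|=2(k_1+k_2)+1$, every matching in $\mathcal{M}_{k_1+k_2}(G)$ has exactly one unmatched vertex, and since $E(G)=E(H_1)\sqcup E(H_2)$, each matching $M$ decomposes uniquely as $(M\cap E(H_1))\sqcup(M\cap E(H_2))$. Let $v$ denote the identified vertex. Three cases arise: (1) $v$ is unmatched, forcing each $H_i$-part to be a near-perfect matching of $H_i$ with $v_i$ unmatched, hence equal to $M_{v_i}$ by the uniqueness hypothesis, so $M=M_{v_1}\cup M_{v_2}$ is the unique matching of this type; (2) $v$ is matched by an edge in $E(H_1)$, so the $H_2$-part is a perfect matching of $H_2-v_2$ (again equal to $M_{v_2}$ by uniqueness), while the $H_1$-part is any near-perfect matching $M_1$ of $H_1$ with $v_1$ matched; (3) symmetric to (2) with the roles of $H_1,H_2$ reversed.

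Next I would define $\Phi$ by sending the unique type-(1) matching to the identified vertex $M_{v_1}=M_{v_2}$ of the quotient, a type-(2) matching $M_1\cup M_{v_2}$ to $M_1\in V(\mathcal{M}_{k_1}(H_1))\setminus\{M_{v_1}\}$, and a type-(3) matching symmetrically. The trichotomy above exactly matches the vertex set of $(\mathcal{M}_{k_1}(H_1)\sqcup\mathcal{M}_{k_2}(H_2))/(M_{v_1}\sim M_{v_2})$, so $\Phi$ is a bijection. For adjacency preservation, I would invoke Lemma~\ref{lem:dist-2}: two near-perfect matchings are adjacent iff there is a witness $w$ such that they agree outside edges incident to the two unmatched vertices and $w$. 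When both matchings are of type (2), their $H_2$-parts both equal $M_{v_2}$, so any witness must lie in $V(H_1)$, and the adjacency condition reduces to the analogous condition for $M_1,M_1'$ in $\mathcal{M}_{k_1}(H_1)$; adjacency between the type-(1) matching and a type-(2) matching $M_1\cup M_{v_2}$ similarly reduces to adjacency between $M_{v_1}$ and $M_1$ in $\mathcal{M}_{k_1}(H_1)$. The type-(3) cases are symmetric, yielding exactly the edges of the quotient graph within each side.

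The main obstacle is ruling out cross-component adjacencies: I need to show that a type-(2) matching $M=M_1\cup M_{v_2}$ and a type-(3) matching $M'=M_{v_1}\cup M_2$ are never adjacent in $\mathcal{M}_{k_1+k_2}(G)$, matching the absence of such edges on the right-hand side. Here the $H_1$-parts differ in whether $v_1$ is matched (it is in $M_1$, via some edge $v_1a\in E(H_1)$, but not in $M_{v_1}$), and symmetrically for the $H_2$-parts with some edge $v_2b\in E(H_2)$. A case analysis on the candidate witness $w$ (lying in $V(H_1)\setminus\{v\}$, in $V(H_2)\setminus\{v\}$, or equal to $v$) shows that the discrepancy involves edges on both sides of the amalgamation, which cannot be reconciled by deleting a single $w$; this contradicts Lemma~\ref{lem:dist-2}. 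With cross-component adjacencies excluded, $\Phi$ is a graph isomorphism.
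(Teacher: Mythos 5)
Your proposal follows essentially the same route as the paper's proof: classify the near-perfect matchings of the amalgam into three types according to whether the identified vertex $v$ is unmatched or matched on the $H_1$- or $H_2$-side (using uniqueness of $M_{v_1},M_{v_2}$), set up the explicit bijection, and verify adjacency by a case analysis on the location of the witness, including ruling out cross-component edges. The only spot where you are looser than the paper is the $w=v$ subcase of the cross-adjacency exclusion: deleting $v$ does remove the two discrepant edges $va$ and $vb$, so the contradiction there must come from comparing what remains (the paper counts unmatched vertices, noting $u$, $u'$, $w_1$, $w_2$ would all be unmatched in $G\langle M\rangle - v = G\langle M'\rangle - v$; equivalently, the symmetric difference also contains edges away from $v$ on both sides), but this is a routine completion of your sketch.
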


\begin{proof}
Let $G\colonequals (H_1\sqcup H_2)/(v_1\sim v_2)$ and let $\mathcal{L} \colonequals (\mathcal{M}_{k_1}(H_1) \sqcup \mathcal{M}_{k_2}(H_2)) /(M_{v_1}\sim M_{v_2})$. The proof proceeds as follows. We first enumerate the types of vertices of $\mathcal L$; then we define a mapping $\Phi$ from $V(\mathcal L)$ to $V(\mathcal M_{k_1+k_2}(G))$ and show that it is bijective. Then we enumerate the edges of $\mathcal L$ and show that $\Phi$ is edge-preserving, establishing an isomorphism between $\mathcal L$ and the near-perfect matching graph of $G$. 

By definition, the vertex set of $\mathcal{L}$ can be partitioned into three types:
\begin{enumerate}
    \item\label{item:H1} Near-perfect matchings of $H_1$ with $v_1$ matched.
    \item\label{item:H2} Near-perfect matchings of $H_2$ with $v_2$ matched.
    \item\label{item:special-vertex} The vertex $M_v$ corresponding to both $M_{v_1}$ and $M_{v_2}$.
\end{enumerate}

Next, we construct a mapping $\Phi$ from $V(\mathcal{L})$ to $V(\mathcal{M}_{k_1+k_2}(G))$. Informally, the distinguished vertex in $\mathcal L$ corresponds to the union of the distinguished matchings in $H_1$ and $H_2$, and every other vertex in $\mathcal L$ corresponds to a near-perfect matching of $G$ by taking the union of a specific near-perfect matching of one base graph with its distinguished vertex matched and the unique matching of the other base graph with its distinguished vertex unmatched. Formally: 

\begin{equation*}
    \Phi(M) \colonequals \begin{cases}
        M\backslash\{v_1w\}\cup\{vw\}\cup M_{v_2} & \text{ if $M$ is of Type~\eqref{item:H1}}\\
        M\backslash\{v_2w\}\cup\{vw\} \cup M_{v_1} & \text{ if $M$ is of Type~\eqref{item:H2}} \\
        M_{v_1}\cup M_{v_2} &\text{ if $M$ is of Type~\eqref{item:special-vertex}}
    \end{cases}
\end{equation*}

Clearly, $\Phi$ is an injection. It remains to describe $\Phi^{-1}$ and establish it as an injection. Let $M$ be an element of $\mathcal{M}_{k_1+k_2}(G)$. Let $\Htil_i$ be the induced subgraph of $G$ on the vertex subset $V(H_i)-v_i+v$ for $i=1, 2$. It must be that $M$ restricted to $\Htil_i$ has $k_i$ edges for $i=1,2$, because each $\Htil_i$ has only enough vertices for $2k_i$ to be matched, and $M$ must have $2(k_1+k_2)$ total vertices matched. %Otherwise, if $M|_{H_1}$ has fewer than $k_1$ edges, then $M|_{H_2}$ has at least $k_2+1$ edges; this is impossible as $H_2$ has only $2k_2+1$ vertices. 
Since $M$ is a near-perfect matching of $G$, it has exactly one unmatched vertex $w$. We proceed by cases:

\textbf{Case 1.} $w\in \Htil_1$ and $w\notin \Htil_2$. This means $M|_{\Htil_1}$ has $w$ as an unmatched vertex, where $w\ne v$, so $v$ must be matched with a vertex in $\Htil_1$ but not $\Htil_2$. Since $v_2$ is uniquely unmatched in $H_2$, this fixes $M|_{\Htil_2}$ as $M_{v_2}$. Then $M_1\colonequals M|_{\Htil_1} - vw + v_1w$ is the unique vertex in $\mathcal L$ with $\Phi(M_1)=M$.

\textbf{Case 2.} $w\notin \Htil_1$ and $w\in \Htil_2$. This is symmetric to Case 1, with $M_2\colonequals M|_{\Htil_2}-vw+v_2w$ the unique vertex in $\mathcal L$ with $\Phi(M_2)=M$.

%This case is symmetric to Case 1. We consider $M_2\colonequals (M|_{\Htil_2}\setminus \{vw\})\cup\{v_2 w\}$. Then $M_2$ is in $V(\mathcal{L})$ in group~\eqref{item:H2} and $\Phi(M_2)=M$.

\textbf{Case 3.} $w\in \Htil_1$ and $w\in  \Htil_2$. This requires $w=v$. Because  $M|_{\Htil_i}$ has $v_i$ unmatched and $v_i$ is uniquely unmatched for $i=1,2$, we must have $M|_{\Htil_i}=M_{v_i}$ for $i=1,2$, and $M_v$ is the unique vertex in $\mathcal L$ with $\Phi(M)=M$. 

Because there is exactly one vertex in the preimage of $\Phi^{-1}(M)$ for each $M$, $\Phi$ is a bijection.

Finally, we establish $\Phi$ as a graph isomorphism between $\mathcal L$ and the near-perfect matching graph of $G$ by showing that $M_1$ and $M_2$ are adjacent in $\mathcal{L}$ if and only if $\Phi(M_1)$ and $\Phi(M_2)$ are adjacent in  $M_{k_1+k_2}(G)$. 
To do this, we partition the edges of $\mathcal L$ into four types:
\begin{enumerate}
    \item[(i)] Edges between $M_1$ and $M_1'$ both of Type \eqref{item:H1} where $M_1M_1'$ is an edge in $\mathcal{M}_{k_1}(H_1)$.
    \item[(ii)] Edges between $M_2$ and $M_2'$ both of Type \eqref{item:H2} where $M_2M_2'$ is an edge in $\mathcal{M}_{k_2}(H_2)$.
    \item[(iii)] Edges between $M_1$ and $M_v$ where $M_1M_{v_1}$ is an edge in $\mathcal{M}_{k_1}(H_1)$.
    \item[(iv)] Edges between $M_2$ and $M_v$ where $M_2M_{v_2}$ is an edge in $\mathcal{M}_{k_2}(H_2)$.
\end{enumerate}

We reason about each of these types of edges $MM'\in E(\mathcal L)$ separately and show that in each case, $\Phi(M)\Phi(M')$ is an edge in $\mathcal M_{k_1+k_2}(G)$.

\begin{enumerate}
    \item[(i)] If $MM'$ is an edge in $\mathcal M_{k_1}(H_1)$ with $v_1$ matched in both $M$ and $M'$, then there is some $z\in V(H_1)$ such that $H_1\langle M\rangle- z = H_1\langle M'\rangle- z$. Let $\widetilde{z}=z$ if $z\neq v_1$ and $\widetilde{z}=v$ if $z=v_1$. Then $G\langle\Phi(M)\rangle- \widetilde{z} = G\langle\Phi(M')\rangle- \widetilde{z}$, so $\Phi(M)$ and $\Phi(M')$ are adjacent in $\mathcal M_{k_1+k_2}(G)$.
    \item[(ii)] If $MM'$ is an edge in $\mathcal M_{k_2}(H_2)$ with $v_2$ matched in both $M$ and $M'$, then the argument that $\Phi(M)$ and $\Phi(M')$ are adjacent in $\mathcal M_{k_1+k_2}(G)$ is symmetric. %z\in V(H_2)$ such that $H_2\langle M_1\rangle- z = H_2\langle M_2\rangle- z$, then let $\widetilde{z}=z$ if $z\neq v_2$ and $\widetilde{z}=v$ if $z=v_2$. Then $G\langle\Phi(M_1)\rangle- \widetilde{z} = G\langle\Phi(M_2)\rangle- \widetilde{z}$, so $\Phi(M_1)$ and $\Phi(M_2)$ are adjacent.
    \item[(iii)] If $MM'$ is an edge in $\mathcal M_{k_1}(H_1)$ with $v_1$ matched in $M$ and $v_1$ unmatched in $M'$, then $M'=M_v$ and there is some $z\in V(H_1)$ such that $H_1\langle M\rangle- z = H_1\langle M_{v_1}\rangle- z$. Then $G\langle\Phi(M)\rangle- z = G\langle\Phi(M_{v})\rangle- z$, so $\Phi(M)$ and $\Phi(M_{v})$ are adjacent in $\mathcal M_{k_1+k_2}(G)$.
    \item[(iv)]  If $MM'$ is an edge in $\mathcal M_{k_2}(H_2)$ with $v_2$ matched in $M$ and unmatched in $M'$, then the argument that $\Phi(M)$ and $\Phi(M_{v})$ are adjacent in $\mathcal M_{k_1+k_2}(G)$ is symmetric.
\end{enumerate}

Conversely, suppose $MM'$ is an edge in $\mathcal{M}_{k_1+k_2}(G)$. This means there exists a vertex $w$ in $G$ such that $G\langle M_1\rangle- w=G\langle M_2\rangle-w$. We proceed with a case analysis according to the subgraph containing $w$, and the vertices $u$ and $u'$ that are unmatched in $M$ and $M'$, respectively.

\textbf{Case 1.} If $u,u'\in \Htil_1-v$, then $M|_{\Htil_2}=M'|_{\Htil_2}=M_{v_2}$. Therefore, the condition that $G\langle M\rangle- w=G\langle M'\rangle- w$ is equivalent to $\Htil_1\zap{M}- w=\Htil_1\zap{M'}- w$. This is true if and only if the matchings $\Phi^{-1}(M)$ and $\Phi^{-1}(M')$ (which are both in group (i)) are adjacent in $\mathcal{L}$.

\textbf{Case 2.} If $u,u'\in \Htil_2-v$, then the matchings $\Phi^{-1}(M)$ and $\Phi^{-1}(M')$ (which are both in group (ii)) are adjacent in $\mathcal L$ by a symmetric argument.

\textbf{Case 3.} If $u\in \Htil_1-v$ and $u'=v$, then $M|_{\Htil_2}=M'|_{\Htil_2}=M_{v_2}$. Therefore, the condition that $G\langle M\rangle- w=G\langle M'\rangle- w$ is equivalent to $\Htil_1\zap{M}- w=\Htil_1\zap{M'}- w$. This is true if and only if the matchings $\Phi^{-1}(M)$ (which is in group (i)) and $\Phi^{-1}(M')$ (which is in group (iii)) are adjacent in $\mathcal{L}$.

\textbf{Case 4.} If $u\in \Htil_2-v$ and $u'=v$, then the matchings $\Phi^{-1}(M_1)$ (which is in group (ii)) and $\Phi^{-1}(M_2)$ (which is in group (iii)) are adjacent in $\mathcal{L}$ by a symmetric argument.

There is one final case to consider for $MM'$, which we show cannot be an edge in $\mathcal M_{k_1+k_2}(G)$. If $u\in \Htil_1-v$ and $u'\in \Htil_2-v$, then in $M$, $v$ must share a part with a vertex $w_1$ in $\Htil_1$, while in $M'$, $v$ must share a part with a vertex $w_2$ in $\Htil_2$. Therefore $w$ must be $v$. However, in $G\langle M\rangle\setminus \{v\} = G\langle M'\rangle\setminus \{v\}$, we must have that $u$, $u'$, $w_1$, and $w_2$ are all unmatched. Since $v$ can only be adjacent to at most one of these in any given matching, this would yield $3$ unmatched vertices, a contradiction.

We conclude that $\mathcal{M}_{k_1+k_2}(G)$ is isomorphic to $\mathcal{L}$ via the map $\Phi$.
\end{proof}

 We can build many near-perfect matching graphs with the help of Propositions~\ref{prop:near-perfect-matchings}~and~\ref{prop:matchings}. In particular, we can inductively apply Proposition~\ref{prop:near-perfect-matchings} to show that any tree is a near-perfect matching graph of another tree. 

\begin{corollary}\label{cor:tree-subdivision} Let $T$ be a tree of order $n\ge 2$, and let $T_s$ be the tree of order $2n-1$ constructed by subdividing each edge of $T$ once. Then $\mathcal{B}_{n}(\overline{T_s})\cong T$.
\end{corollary}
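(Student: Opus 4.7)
The plan is to first reduce the statement to one about near-perfect matching graphs via Proposition~\ref{prop:matchings}, then induct on $n$ using the join construction of Proposition~\ref{prop:near-perfect-matchings}. Since $T_s$ is a tree of order $2n-1$, it is triangle-free, so Proposition~\ref{prop:matchings} gives $\mathcal{M}_{n-1}(T_s) \cong \mathcal{B}_n(\overline{T_s})$, and it suffices to prove $\mathcal{M}_{n-1}(T_s) \cong T$. The base case $n=2$ is a direct check: $T_s=P_3$, and its two near-perfect matchings are adjacent via the middle vertex by Lemma~\ref{lem:dist-2}, so $\mathcal{M}_1(P_3)\cong K_2 = T$.

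For the inductive step, pick a leaf $v$ of $T$ with neighbor $u$, and let $T' = T - v$. Let $m$ be the subdivision vertex placed on the edge $uv$ in $T_s$. Then $T_s$ decomposes as $((T')_s \sqcup P_3)/(u \sim u')$, where $P_3$ is the path $u'\mathbin{-}m\mathbin{-}v$ glued to $(T')_s$ by identifying its endpoint $u'$ with the original vertex $u \in V(T')$. To invoke Proposition~\ref{prop:near-perfect-matchings}, we must check that $u'$ is uniquely unmatched in $P_3$ (immediate: the forced matching is $\{mv\}$) and that $u$ is uniquely unmatched in $(T')_s$. The latter requires an auxiliary lemma that in the subdivision of any tree, every original vertex is uniquely unmatched: removing such a vertex $u$ from $(T')_s$ forces each subdivision vertex formerly adjacent to $u$ to be matched with its one remaining (original) neighbor, and this pairing then propagates uniquely outward through each component to a perfect matching.

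Applying Proposition~\ref{prop:near-perfect-matchings} yields
\[
\mathcal{M}_{n-1}(T_s)\cong \bigl(\mathcal{M}_{n-2}((T')_s)\sqcup \mathcal{M}_1(P_3)\bigr)/(M_u\sim M_{u'}),
\]
and the inductive hypothesis supplies $\mathcal{M}_{n-2}((T')_s)\cong T'$. The main obstacle is bookkeeping: the statement of the induction as written does not tell us \emph{which} vertex of $T'$ the matching $M_u$ is identified with. We resolve this by strengthening the inductive claim to assert that the isomorphism $\mathcal{M}_{n-2}((T')_s)\cong T'$ sends $M_x$ to $x$ for every original vertex $x\in V(T')$. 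To justify the strengthening, we also need to rule out any near-perfect matching of $(T')_s$ whose unmatched vertex is a subdivision vertex; this follows since removing a subdivision vertex from $(T')_s$ splits it into two components that are each subdivisions of subtrees of odd order, neither admitting a perfect matching. Thus near-perfect matchings of $(T')_s$ are in canonical bijection with $V(T')$, and the strengthened isomorphism is preserved by the inductive construction. Under this correspondence, together with $\mathcal{M}_1(P_3)\cong K_2$, the gluing $M_u\sim M_{u'}$ attaches a pendant leaf $v$ at $u\in V(T')$, producing exactly $T$, completing the induction.
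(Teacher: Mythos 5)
Your proposal is correct and takes essentially the same approach as the paper: reduce to near-perfect matching graphs via Proposition~\ref{prop:matchings}, then induct on $n$ by peeling off a leaf and applying the join construction of Proposition~\ref{prop:near-perfect-matchings}, carrying the ``uniquely unmatched'' property through the induction. Your explicit strengthening of the inductive hypothesis (that the isomorphism sends $M_x$ to $x$) and the observation that subdivision vertices cannot be the unmatched vertex are details the paper's proof carries implicitly; making them explicit is a reasonable and harmless elaboration.
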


\begin{proof}
We first argue that the near-perfect matching graph of any subdivided tree is the tree itself. As a base case, observe that $\mathcal M_1(P_3)=P_2$, with the ends of the $P_2$ corresponding to the near-perfect matching with the leaves of $P_3$ unmatched. 

Let $T$ be a tree of order $n$. Because any tree can be described as a leaf attached to a smaller tree, let $T'$ be the tree of order $n-1$ and let $v\in V(T)$ be such that $T=(T'\sqcup P_2)/(v\sim u)$ for either $u\in V(P_2)$. Assume inductively that $T'$ is such that the near-perfect matching graph of $T'_s$ is $T'$ and all vertices in the bipartition of $T'_s$ containing its leaves are uniquely unmatched vertices in $T'_s$. Then applying Proposition~\ref{prop:near-perfect-matchings}, we conclude that the near-perfect matching graph of $T_s$ is $T$. 

Finally, Proposition~\ref{prop:matchings} gives that $T\cong \mathcal M_{n-1}(T_s)\cong \mathcal B_n(\overline{T_s})$.
\end{proof}

Proposition~\ref{prop:near-perfect-matchings} can be applied in other ways to yield additional families of graphs realizable as Bell coloring graphs. For example, note that although the graphs yielding even cycles do not have uniquely unmatched vertices, odd cycles do. Therefore, any graph that can be constructed by iteratively joining odd cycles is a Bell coloring graph. We leave further exploration of families of graphs that can be constructed similarly for future work and instead summarize our results of this section as follows.

\begin{theorem}\label{thm:trees-cycles}
    All trees and all cycle graphs are realizable as Bell coloring graphs.
\end{theorem}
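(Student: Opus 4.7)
The plan is to combine the realization results established throughout Section~\ref{sec:matchings}; this theorem is essentially a compilation of the technical lemmas and corollary proved above, together with the handling of a few small cases not covered by the general machinery. First, for trees of order $n\ge 2$, I would invoke Corollary~\ref{cor:tree-subdivision} directly: given $T$, the once-subdivided tree $T_s$ satisfies $\B_n(\overline{T_s})\cong T$. The only tree not covered is the singleton $K_1$, which I realize trivially as $\B_1(K_1)\cong K_1$.

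Next, for cycles, I would split into odd and even parities. For odd cycles $C_{2k+1}$ with $k\ge 2$, Lemma~\ref{lem:odd-cycles} combined with Proposition~\ref{prop:matchings} yields $\B_{k+1}(\overline{C_{2k+1}})\cong C_{2k+1}$, noting that $C_{2k+1}$ is triangle-free in this range. The exceptional small case $C_3$ is realized via Example~\ref{ex:K3} together with the general identification $\B_k(G\sqcup K_k)\cong \mathcal{C}_k(G)$, giving $\B_3(K_3\sqcup K_1)\cong \mathcal{C}_3(K_1)\cong K_3$. For even cycles $C_{n+2}$ with $n\ge 4$ even, Lemma~\ref{lem:even-cycles} provides the matching-graph isomorphism $\mathcal{M}_{n/2}(G_n)\cong C_{n+2}$. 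Here I would quickly verify from Definition~\ref{def:ear-graph} that $G_n$ is triangle-free (any triangle through $u$ or $w$ would force $v_1$ and $v_{n-1}$ to be adjacent, which does not occur for $n\ge 4$), so Proposition~\ref{prop:matchings} converts the matching-graph realization into a Bell coloring graph realization. The remaining case $C_4$ is already treated in the proof of Lemma~\ref{lem:even-cycles} via $\B_2(K_2\sqcup \overline{K_2})\cong \mathcal{C}_2(\overline{K_2})\cong C_4$.

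There is no real obstacle here: every step is either a direct citation of a result proved earlier in the section or a short verification. The only piece of genuine bookkeeping is tracking the triangle-free hypothesis in Proposition~\ref{prop:matchings}, which is precisely the reason $C_3$ and $C_4$ must be peeled off as separate small cases rather than absorbed into the general near-perfect matching framework.
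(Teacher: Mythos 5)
Your proposal is correct and takes essentially the same approach as the paper, which states Theorem~\ref{thm:trees-cycles} as a summary of the section (Corollary~\ref{cor:tree-subdivision} for trees, Lemma~\ref{lem:odd-cycles} for odd cycles, Lemma~\ref{lem:even-cycles} for even cycles) without giving a separate proof block. In fact, you are more careful than the paper: you explicitly handle $K_1$ (excluded from Corollary~\ref{cor:tree-subdivision} by the hypothesis $n\ge 2$) and you correctly note that the triangle-free hypothesis of Proposition~\ref{prop:matchings} fails for $C_3$, so that the ``in particular'' isomorphism of Lemma~\ref{lem:odd-cycles} cannot apply at $k=1$ and $C_3$ must be realized separately (e.g., via Example~\ref{ex:K3}).
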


\section{Reconstructing trees from their Bell 3-coloring graph}\label{sec:tree-reconstruction}

This section proves Theorem~\ref{thm:tree-reconstruction} through a sequence of lemmas that yield an algorithmic procedure for reconstructing a tree given its Bell 3-coloring graph. First, we prove that three categories of trees can be differentiated based on the degree sequence of their Bell coloring graphs, and then we show that within each category, other Bell coloring graph features determine the specific tree. As we can manually verify the result for all trees of order at most $5$, we assume $n\geq 6$ throughout this section. 

Since $\B_3(T)$ has $2^{n-2}$ vertices (see \cite{DP09}*{Proposition~3.1}), it suffices to show that the map $T\mapsto \B_3(T)$ is injective when restricted to trees of fixed order. Let $Z=\{X, Y, \emptyset\}$ denote the unique bipartition of $T$ viewed as a stable $3$-partition, with $|X|\leq |Y|$. A \emph{double broom} $B(3, a, b)$ is a tree formed by a path $u$-$v$-$w$ with $a$ leaves adjacent to $u$ and $b$ leaves adjacent to $w$. For $n \ge 6$, we have $a+b \ge 3$, and we assume $a \ge b$. Note that $|X|=2$ if and only if $T \cong B(3, a, b)$.

We categorize trees of order $n$ based on the size of the smaller bipartition set $X$:
\begin{enumerate}[(i)]
    \item The star graph $K_{1, n-1}$, where $|X|=1$.
    \item Double brooms $B(3, a, b)$, where $|X|=2$.
    \item Generic trees, where $|X|\geq 3$.
\end{enumerate}

\begin{lemma}\label{lem:max-degree-B3T}
For a tree $T$ of order $n$, the maximum degree of $\B_3(T)$ is at most $n$.
\end{lemma}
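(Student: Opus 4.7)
The plan is to bound $\deg(P)$ for an arbitrary $P\in V(\B_3(T))$ by summing, over all $v\in V(T)$, the number $m_v(P)$ of partitions $Q\neq P$ with $P\sim_v Q$. Every neighbor of $P$ in $\B_3(T)$ arises from moving at least one vertex, so
\[
\deg(P)\leq\sum_{v\in V(T)}m_v(P).
\]
This inequality may be strict because of doubly realized edges (see Remark~\ref{rem:double-edge-responsible}), but that slack only helps us.

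The heart of the proof is the pointwise bound $m_v(P)\leq 1$ for every $v\in V(T)$, under the assumption $n\geq 2$. Suppose $v$ lies in part $A$ of $P=\{A,B,C\}$. A partition $Q\neq P$ with $P\sim_v Q$ is obtained by relocating $v$ from $A$ to $B$ or to $C$, and the receiving part must remain independent after adding $v$. Thus $m_v(P)\leq 2$ a priori, and $m_v(P)=2$ would require both $B\cup\{v\}$ and $C\cup\{v\}$ to be independent. In that case $v$ has no neighbors in $B\cup C$, and since $A$ is independent, $v$ has no neighbors in $A$ either, so $v$ is isolated in $T$. This contradicts the fact that every vertex of a tree on $n\geq 2$ vertices has positive degree.

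Combining the two inequalities yields $\deg(P)\leq n$. The case $n=1$ is immediate, since $\B_3(T)$ then consists of a single vertex of degree $0\leq n$.

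I do not foresee a real obstacle. The argument is essentially the observation that in a connected graph, the neighbors of $v$ must populate at least one of the other two parts of any stable $3$-partition, pinning down at least one "forbidden" target. The only subtle point is that $\sum_v m_v(P)$ overcounts $\deg(P)$ when a neighbor is doubly realized, but because we seek only an upper bound, this overcount is harmless.
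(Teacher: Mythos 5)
Your proof is correct and follows essentially the same route as the paper: both bound $\deg(P)\leq\sum_v m_v$ and show $m_v\leq 1$ by using the fact that every vertex of a tree (of order at least $2$) has a neighbor, which blocks at least one of the two possible target parts. The only cosmetic differences are that you argue $m_v\leq 1$ by contradiction (an isolated vertex) rather than directly, and you treat $n=1$ explicitly while the paper assumes $n\geq 6$ throughout that section.
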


\begin{proof}
Let $P\in \B_3(T)$ be given by $P=\{S_1, S_2, S_3\}$. Let $m_{v} = \# \{R \ | \ P\sim_{v} R\}$ count the number of neighbors that differ from $P$ at the vertex $v$. 

We claim that $m_{v}\leq 1$ for each $v\in V(T)$. Let $w$ be a neighbor of $v$. Without loss of generality, suppose $v\in S_1$ and $w\in S_2$. If there exists a stable partition $R$ (different from $P$) for which $P-v=R-v$, then it is obtained from $P$ by moving $v$ to $S_3$; in this case, $R=\{S_1-v, S_2, S_3\cup \{v\}\}$ and $m_v=1$. If $v$ has neighbors in both $S_2$ and $S_3$, then no such $R$ exists and $m_v=0$. In either case, $m_v\leq 1$ for each $v$, and so $\deg(P)\leq \sum_{v} m_v \leq n$.
\end{proof}

We now establish that the unique bipartition $Z=\{X, Y, \emptyset\}$ is the sole candidate for a vertex of degree $n$.

\begin{lemma}\label{lem:degree-n-implies-bipartition}
For a tree $T$ of order $n$, let $P\in \B_3(T)$ be a partition with $3$ nonempty parts $S_1, S_2, S_3$. Then $\deg_{\B_3(T)}(P)\leq n-1$.
\end{lemma}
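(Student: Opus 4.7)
The plan is to refine the counting argument of Lemma~\ref{lem:max-degree-B3T} by showing that when all three parts of $P$ are nonempty, the bound $\deg(P)\le n$ cannot be attained. Following the proof of that lemma, for each $v\in V(T)$ write
\[
m_v = \#\{R\in\B_3(T):R\ne P,\ R\sim_v P\}.
\]
Since three nonempty parts force $n\ge 3$ and $T$ is connected, every vertex has a neighbor, and the argument of Lemma~\ref{lem:max-degree-B3T} gives $m_v\in\{0,1\}$. Combined with Remark~\ref{rem:double-edge-responsible}, which says each edge at $P$ is realized by at most two vertices, we obtain $\deg(P)\le \sum_v m_v\le n$. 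My strategy is to assume $\deg(P)=n$ and derive a contradiction.

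Under this assumption, $\sum_v m_v=n$ and hence $m_v=1$ for every $v\in V(T)$. Unpacking this condition: for $v\in S_i$, the only way to move $v$ to a different class and preserve stability is if there is exactly one class $S_j$ ($j\ne i$) that contains no neighbor of $v$. Equivalently, all neighbors of $v$ lie in a single part $S_{c(v)}$ with $c(v)\ne i$. So every vertex has all of its tree-neighbors concentrated in a single class.

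The next step is to partition the edges of $T$ by which pair of classes they connect, writing $E(T)=E_{12}\sqcup E_{13}\sqcup E_{23}$ with $E_{ij}$ the edges whose endpoints lie in $S_i$ and $S_j$. The condition $m_v=1$ for every $v$ translates to the statement that every vertex is incident to edges of only one type $E_{ij}$: if $v\in S_i$, then $c(v)$ singles out the unique $j$, and all of $v$'s incident edges lie in $E_{i,c(v)}$. Defining
\[
V_{ij} \colonequals \{v\in V(T): v \text{ is incident to some edge of } E_{ij}\},
\]
the sets $V_{12},V_{13},V_{23}$ partition $V(T)$ (every vertex has a neighbor, so lies in at least one; the local condition ensures exactly one), with $V_{ij}\subseteq S_i\cup S_j$. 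Since edges of $T$ only run within one $V_{ij}$, the tree $T$ decomposes as the disjoint union $T[V_{12}]\sqcup T[V_{13}]\sqcup T[V_{23}]$. Connectedness of $T$ then forces exactly one piece to be all of $V(T)$, say $V(T)=V_{ij}$, whence $V(T)\subseteq S_i\cup S_j$, making the third part $S_k$ empty, contradicting the hypothesis.

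I expect the only real subtlety to be carefully translating the pointwise condition $m_v=1$ into the global edge-partition statement; once that translation is in hand, connectedness of $T$ kills the extremal scenario immediately.
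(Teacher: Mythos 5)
Your proof is correct, and it reaches the contradiction by a genuinely different route than the paper. Both arguments begin identically: assuming $\deg(P)=n$ forces $m_v=1$ for every $v$, which translates into the key property that each vertex of $T$ has all of its neighbors inside a single part of $P$ (your translation is sound: with all three parts nonempty, a move of $v$ can never reproduce $P$, and every vertex of a tree on $n\ge 3$ vertices has a neighbor, so $m_v=1$ is exactly ``$N(v)$ lies in one other class''). From that point the paper argues extremally: it takes a minimal induced subtree $T'$ on which $P$ still has three nonempty parts, looks at a leaf $u$ of $T'$ with neighbor $v$, uses the key property to find another neighbor $w$ of $v$ in the same part as $u$, and contradicts minimality by deleting $u$. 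You instead argue globally: partition $E(T)$ into the three class-pair types $E_{12},E_{13},E_{23}$, observe that the key property forces each vertex to be incident to edges of only one type, so the sets $V_{ij}$ partition $V(T)$ with no edges between them; connectedness then forces one $V_{ij}$ to be all of $V(T)$, emptying the third part. Your version is arguably cleaner and, unlike the paper's subtree argument, applies verbatim to any connected base graph, not just trees. Two small remarks: the citation of Remark~\ref{rem:double-edge-responsible} is unnecessary (the inequality $\deg(P)\le\sum_v m_v$ only needs that each neighbor is realized by at least one vertex), and it would be worth one sentence noting explicitly that, because all parts are nonempty, a legal move of $v$ always yields a partition distinct from $P$ and distinct moves yield distinct neighbors, so $m_v$ really does count the free classes.
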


\begin{proof}
Suppose, to the contrary, that some partition $P=\{S_1,S_2,S_3\}$ with $S_1,S_2,S_3$ nonempty has degree $n$ as a vertex in $\B_3(T)$. By the proof of Lemma~\ref{lem:max-degree-B3T}, any partition $P\in \B_3(T)$ with $\deg(P)=n$ satisfies the following property: the neighborhood $N_{T}(v)$ is contained within a single part of $P$ for each vertex $v$ of $T$. 

The restriction of $P$ to any subtree of $T$ maintains this property. Consider a minimal (with respect to inclusion) induced subtree $T'$ of $T$ such that the restriction of $P$ to $T'$ also has 3 nonempty parts. In particular, $|V(T')|\geq 3$. Consider a leaf vertex $u$ of $T'$, with unique neighbor $v$. Suppose without loss of generality that $u\in S_1$ and $v\in S_2$. Now consider any neighbor $w$ of $v$ (excluding $u$). By the desired property, $w\in S_1$, but then consider $T''=T'-u$. Since both $u$ and $w$ belong to $S_1$, we know that $T''$ also has three nonempty parts and has the desired property, contradicting the minimality of $T'$. 
\end{proof}

The degree of $Z=\{X, Y, \emptyset\}$ in $\B_3(T)$ depends on the size of the smaller bipartition set.  In particular, we use the following notation for a neighbor of $Z$: 
\[
Z_v\colonequals 
\begin{cases}
\{X-v,\,Y,\,\{v\}\}, & v\in X,\\[2pt]
\{X,\,Y-v,\,\{v\}\}, & v\in Y,
\end{cases}
\] 
Then we can write the degree as $\deg(Z)\colonequals\abs{\set{Z_v \, : \, v\in V(T)}\setminus \{Z\}}$.

\begin{lemma} \label{lem:degree-of-bipartition}
Let $Z$ be the bipartition of a tree $T$. The degree of $Z$ in $\B_3(T)$ is given by:
\[
\deg(Z) = 
\begin{cases} 
n & \text{if } |X| \ge 3 \text{ (generic)} \\
n-1 & \text{if } |X| = 2 \text{ (double broom)}\\
n-1 & \text{if } |X| = 1 \text{ (star)}
\end{cases}
\]
\end{lemma}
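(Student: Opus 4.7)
The plan is to first show that every neighbor of the bipartition vertex $Z = \{X, Y, \emptyset\}$ in $\B_3(T)$ arises as some $Z_v$, and then to detect coincidences $Z_v = Z_w$ for $v \neq w$ and identifications $Z_v = Z$. Since $n \ge 6$ forces $T$ to have no isolated vertex, every $v$ has all its neighbors on the opposite bipartition side, so moving $v$ across the bipartition would break independence; the only legal reassignment is into the empty third part, which produces exactly $Z_v$. Hence the neighbor set of $Z$ is $\{Z_v : v \in V(T)\} \setminus \{Z\}$, and the problem reduces to counting distinct elements in this set.

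The main tool I will use is the multiset of part sizes. For $v \in X$, $Z_v$ has part sizes $\{|X|-1, |Y|, 1\}$; for $v \in Y$, $\{|X|, |Y|-1, 1\}$; and $Z$ has $\{|X|, |Y|, 0\}$. In the generic case $|X| \ge 3$ (so $|Y| \ge 3$), every nonempty part of $Z_v$ other than $\{v\}$ itself has size at least $2$, so $\{v\}$ is the unique singleton of $Z_v$; this recovers $v$, making $v \mapsto Z_v$ injective, and since $Z$ has no singleton, $Z \ne Z_v$ for any $v$. This yields $\deg(Z) = n$.

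In the double broom case $|X| = 2$ with $X = \{u, w\}$, a direct check gives $Z_u = \{\{u\}, \{w\}, Y\} = Z_w$, so the two vertices of $X$ collapse to a single neighbor; for $v \in Y$, since $|X| = 2$ and $|Y|-1 \ge 3$, the singleton $\{v\}$ is still unique in $Z_v$, so the $n-2$ partitions $Z_v$ with $v \in Y$ are pairwise distinct. Comparing the size multisets $\{1, 1, n-2\}$ for the $X$-neighbor and $\{2, n-3, 1\}$ for the $Y$-neighbors (distinct when $n \ge 6$), and noting that $Z$ has a part of size $0$ that none of them has, rules out all remaining coincidences; so $\deg(Z) = 1 + (n-2) = n - 1$. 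In the star case $|X| = 1$ with $X = \{c\}$, moving the center gives $Z_c = \{\emptyset, Y, \{c\}\} = Z$ as an unordered multiset, contributing no neighbor; for each leaf $v \in Y$, $Z_v = \{\{c\}, Y-v, \{v\}\}$, and since $c \ne v$, the pair of singletons $\{\{c\}, \{v\}\}$ still determines $v$, so these $n-1$ partitions are pairwise distinct and all differ from $Z$ (which has a part of size $0$), giving $\deg(Z) = n - 1$.

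The main obstacle is keeping careful track of unordered multiset equalities in precisely those small regimes where a ``nontrivial'' part of $Z_v$ accidentally has size $1$, namely when $|X| \le 2$; this is exactly what forces the degree to drop from $n$ to $n-1$ for stars and double brooms, and why the three-way split in the statement is unavoidable.
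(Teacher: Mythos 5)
Your proof is correct and follows essentially the same route as the paper: identify all neighbors of $Z$ as partitions $Z_v$, then case-split on $|X|$ and count coincidences $Z_v = Z_w$ and identifications $Z_v = Z$. The multiset-of-part-sizes device you use to rule out coincidences is a clean organizational touch, but the underlying argument and the three degenerate cases (the star collapse $Z_c = Z$, the double-broom collapse $Z_u = Z_w$, and the generic injectivity) are exactly the paper's.
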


\begin{proof}
The neighbors of $Z$ are exactly the partitions obtained by moving a vertex $v$ to the empty part. We analyze when these partitions are distinct. First, $Z_v=Z$ if and only if $|X|=1$ (and $v\in X$). Note that $|Y|=1$ is impossible as $n\ge 6$. For $u\neq w$, $Z_u=Z_w$ if and only if $u, w$ are in the same part, say $X$, and $X=\{u, w\}$ (i.e., $|X|=2$).

\begin{enumerate}[(i)]
     \item $|X|=1$: The move from $X$ yields $Z$ itself. Moves from $Y$ yield $|Y|=n-1$ distinct neighbors. Thus, $\deg(Z)=n-1$. 
    \item $|X|=2$: Moves from $X$ yield a single distinct neighbor ($Z_u=Z_w$). Moves from $Y$ yield $|Y|=n-2$ distinct neighbors. Thus, $\deg(Z)=1+(n-2)=n-1$. 
    \item $|X|\geq 3$: All $n$ moves yield distinct neighbors, none equal to $Z$. Thus, $\deg(Z)=n$.
    \qedhere
\end{enumerate}
\end{proof}

These bounds allow us to classify trees based on the degree sequence of $\B_3(T)$ as follows.

\begin{lemma}\label{lem:degree-classification}
Consider the degree sequence $d_1\geq d_2\geq d_3\geq \dots \geq d_{2^{n-2}}$ of some $\B_3(T)$. Then:
\begin{enumerate}[(i)]
\item $T$ is a star if and only if $d_1=d_{2^{n-2}}$.
\item $T$ is a double broom $B(3, a, b)$ if and only if $d_1=d_2>d_{2^{n-2}}$.
\item $T$ is generic if and only if $d_1>d_2$.
\end{enumerate}
\end{lemma}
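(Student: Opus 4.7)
My plan is to compute $d_1$, $d_2$, and $d_{2^{n-2}}$ explicitly for each of the three tree classes (star, double broom, generic) and to observe that the three right-hand conditions partition all possible degree sequences. This reduces the biconditional to proving the forward direction in each of the three cases.

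The first step is to identify candidates for maximum-degree vertices. Combining Lemmas~\ref{lem:max-degree-B3T} and~\ref{lem:degree-n-implies-bipartition}, any partition of $\B_3(T)$ of degree $n$ must have at most two nonempty parts, and since $T$ is connected and bipartite, the only such partition is the bipartition $Z=\{X,Y,\emptyset\}$. Lemma~\ref{lem:degree-of-bipartition} then gives $\deg(Z)=n$ exactly when $|X|\ge 3$. Thus in the generic case $Z$ is the unique maximum-degree vertex and $d_1=n>d_2$, settling case~(iii); in the star and double broom cases we instead get $d_1=\deg(Z)=n-1$.

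For the star case, I would show that $\B_3(K_{1,n-1})$ is $(n-1)$-regular. Since the center $u$ is universal, every stable $3$-partition has the form $\{\{u\},A,B\}$ where $\{A,B\}$ is an unordered $2$-partition of the leaves. The center is always stuck, contributing $m_u=0$, but each leaf can be moved to the opposite leaf-part and, for $n\ge 4$, the resulting partition is distinct from $P$. Moreover, Lemma~\ref{lem:double_edge} rules out any doubly realized edges because the only partition with an empty part is $Z$ itself, whose nonempty parts have sizes $1$ and $n-1\ge 5$. Hence every vertex has degree exactly $n-1$ and $d_1=d_{2^{n-2}}=n-1$.

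For the double broom $B(3,a,b)$, the main obstacle is exhibiting a second partition of degree $n-1$. The natural candidate $Z_u=Z_w=\{\{u\},\{w\},Y\}$ fails: a direct count yields $\sum_v m_v=n-1$, but by Lemma~\ref{lem:double_edge} the edge $Z_u\sim Z$ is doubly realized by both $u$ and $w$ (since $uw\notin E(T)$), so $\deg(Z_u)=n-2$. Instead, I would propose $P=\{\{u,w\},\{v\}\cup U,W\}$ when $b\ge 1$ and $P=\{\{u,w\},\{v\},U\}$ when $b=0$, where $U$ and $W$ denote the leaf-sets at $u$ and $w$. In both cases $P$ has no empty part, so Lemma~\ref{lem:double_edge} forbids any doubly realized edges; a direct vertex-by-vertex count then shows that exactly one of $u$ and $w$ is stuck while every other vertex can move, giving $\deg(P)=n-1$. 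Combined with $P\ne Z$, this yields $d_1=d_2=n-1$, and $\deg(Z_u)=n-2$ gives $d_{2^{n-2}}\le n-2<n-1$, settling case~(ii).
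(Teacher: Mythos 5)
Your overall strategy mirrors the paper's: reduce the three biconditionals to their forward implications (since the three degree conditions are mutually exclusive and exhaustive), pin down $d_1$ via Lemmas~\ref{lem:max-degree-B3T}, \ref{lem:degree-n-implies-bipartition} and \ref{lem:degree-of-bipartition}, prove regularity for stars, exhibit a second vertex of degree $n-1$ and a vertex of degree $n-2$ for double brooms, and use uniqueness of the degree-$n$ vertex $Z$ in the generic case. Your witnesses differ from the paper's: the paper takes $Z_x=\{X,\,Y-x,\,\{x\}\}$ for a leaf $x$ at $u$ as the second vertex of degree $n-1$ and $Z_v$ (the middle vertex isolated) as the vertex of degree $n-2$, whereas you take $P=\{\{u,w\},\,\{v\}\cup U,\,W\}$ (resp.\ $\{\{u,w\},\{v\},U\}$ when $b=0$) and $Z_u$. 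Both choices work, and your degree counts for them are correct.

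One justification step is wrong as written, though your conclusion survives. In the double-broom case you claim that because $P$ has no empty part, Lemma~\ref{lem:double_edge} forbids any doubly realized edge incident to $P$. That inference is false in general: in a doubly realized edge only one endpoint carries the empty part, while the other has the form $\{\{x\},\{y\}\}\cup R$. Your own example $Z_u=\{\{u\},\{w\},Y\}$ refutes the inference --- it has no empty part, yet its edge to $Z$ is doubly realized, exactly as you observe two sentences earlier. The correct argument for your $P$ is that it has no empty part \emph{and} at most one singleton part (its part sizes are $2$, $1+a\ge 3$, $b$ when $b\ge 1$, and $2$, $1$, $n-3\ge 3$ when $b=0$), so neither decomposition in Lemma~\ref{lem:double_edge} can apply to $P$; hence distinct legal moves from $P$ yield distinct neighbors and $\deg(P)=n-1$. (Your star-case argument does not have this issue, since there you examine the endpoint that would have to carry the empty part.) With this one-line repair the proof is complete and is essentially the paper's proof with different witness partitions.
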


\begin{proof} We analyze the three cases based on the size of the smaller bipartition set $X$.

\textbf{Case 1:} $|X|=1$.

Let $X=\{c\}$ (the center). By Lemma \ref{lem:degree-of-bipartition}, $\deg(Z)=n-1$. Any stable 3-partition $P$ must isolate $c$, say $P=\{\{c\}, L_1, L_2\}$, where $L_1, L_2$ partition $Y$ (the leaves). The center $c$ cannot move to create a distinct partition. Any leaf $v\in L_i$ can move to $L_j$ ($j\neq i$) because $N(v)=\{c\}$. Therefore, $\deg(P) = |L_1|+|L_2| = |Y| = n-1$. The graph $\B_3(T)$ is $(n-1)$-regular. Thus, $d_1=d_{2^{n-2}}$.  

\textbf{Case 2:} $|X|=2$

By Lemma \ref{lem:degree-of-bipartition}, $\deg(Z)=n-1$. By Lemmas~\ref{lem:max-degree-B3T} and \ref{lem:degree-n-implies-bipartition}, the maximum degree must be $n-1$, so $d_1=n-1$. We first show $d_1=d_2$. Let $X=\{u, w\}$. Let $x$ be a leaf adjacent to $u$. Consider the partition $Z_x = \{X, Y-x, \{x\}\}$. We compute $\deg(Z_x)$:
\begin{itemize}
    \item $x$ moves to $Y-x$ and yields $Z$. (1 neighbor).
    \item $w$ moves to $\{x\}$, since $wx\notin E(T)$. (1 neighbor).
    \item Any $y\in Y-x$ moves to $\{x\}$. ($n-3$ neighbors).
\end{itemize}
Total degree is $1 + 1 + (n-3) = n-1$. Since $Z$ and $Z_x$ are distinct vertices with degree $n-1$, we have $d_1=d_2=n-1$.

We now show that the Bell 3-coloring graph is not regular. Consider $Z_v = \{X, Y-v, \{v\}\}$ where $v$ is the central vertex in $Y$. Since $v$ is adjacent to both $u$ and $w$, neither can move to $\{v\}$. The vertex $v$ can move from $\{v\}$ back into $Y-v$, which yields $Z$, and each leaf in $Y-v$ can move to $\{v\}$. Hence, $\deg(Z_v) = 1 + (n-3) = n-2$. Thus, $\B_3(T)$ is not regular, and $d_1=d_2>d_{2^{n-2}}$.

\textbf{Case 3:} $|X|\ge 3$  

By Lemma \ref{lem:degree-of-bipartition}, $\deg(Z)=n$, while any other partition $P$ has three nonempty parts and therefore satisfies $\deg(P) \leq n-1$ by Lemma~\ref{lem:degree-n-implies-bipartition}. Thus, $Z$ is the unique vertex of maximum degree. Therefore, $d_1=n > d_2$. \end{proof}

We now reconstruct the adjacency matrix of $T$. By Lemma~\ref{lem:degree-classification}, we can distinguish the three categories of trees; we treat each case separately. Before we proceed, we explicitly describe the Bell $2$-coloring graph of an independent set.

\begin{ex}\label{ex:Bell-2-color-indep} Given an integer $m\geq 1$, the graph $\B_2(\overline{K}_{m})$ is obtained from the hypercube $Q_{m-1}$ by adding edges between antipodal vertices. In particular, $\B_2(\overline{K}_{m})$ is $m$-regular.

Write $V(\overline{K}_{m})=\{v_1,\dots,v_{m}\}$ and fix $v_{m}$ to break
symmetry. Any stable $2$-partition of $\overline{K}_{m}$ can be written uniquely as
\[
\{S_1,S_2\cup\{v_{m}\}\},
\]
where $S_1,S_2\subseteq\{v_1,\dots,v_{m-1}\}$ form a partition. Encode such a partition by a binary string $s_1\cdots s_{m-1}\in\{0,1\}^{m-1}$, where
\[
s_i= \begin{cases}
0, & v_i\in S_1,\\
1, & v_i\in S_2.
\end{cases}
\]
This yields a bijection between $V\bigl(\B_2(\overline{K}_{m})\bigr)$ and the
vertex set of the $(m-1)$-dimensional hypercube $Q_{m-1}$, so
\[
|V(\B_2(\overline{K}_{m}))| = 2^{m-1}.
\]
We now translate adjacency. Let $P$ and $P'$ be two such partitions, with corresponding strings $s_1\cdots s_{m-1}$ and $t_1\cdots t_{m-1}$.

\emph{(i) Moving $v_i$ for $i\le m-1$.}
If $P'$ is obtained from $P$ by moving $v_i$ from $S_1$ to $S_2\cup\{v_{m}\}$
(or vice versa), then $s_j=t_j$ for all $j\ne i$ and $s_i\ne t_i$.
Thus, $P$ and $P'$ differ at exactly one leaf $v_i$, and the strings
differ in exactly one coordinate. These edges correspond to the usual edges of $Q_{m-1}$.

\emph{(ii) Moving $v_{m}$.}
The only way $v_{m}$ can move is by swapping the two parts:
\[
\{S_1,S_2\cup\{v_{m}\}\} \longleftrightarrow
\{S_2\cup\{v_{m}\},S_1\}.
\]
Under our encoding, this swap sends $s_1\cdots s_{m-1}$ to its bitwise complement $\overline{s}_1\cdots \overline{s}_{m-1}$, i.e., a string that differs from $s$ in all coordinates. Thus, each vertex in $\B_2(\overline{K}_{m})$ has a unique additional neighbor given by its antipode in $Q_{m-1}$.

Consequently, $\B_2(\overline{K}_{m})$ is obtained from $Q_{m-1}$ by adding an edge between every pair of antipodal vertices. Every vertex has degree $(m-1)$ from the hypercube edges plus $1$ from its antipodal neighbor, so $\B_2(\overline{K}_{m})$ is $m$-regular.
\end{ex}

\subsection{Stars}

By Lemma~\ref{lem:degree-classification}, stars are the only trees whose Bell 3-coloring graphs are regular. Thus, we can immediately infer when a Bell 3-coloring graph of a tree of order $n$ is the Bell 3-coloring graph of $K_{1,n-1}$, the unique star of order $n$. 

Before handling the reconstruction of double brooms from their Bell 3-coloring graphs, we observe the exact structure of a star's Bell 3-coloring graph. Note that any stable 3-partition of a star must isolate the center vertex, so stable 3-partitions of a star are in bijection with stable 2-partitions of $n-1$ isolated vertices and hence $\B_3(K_{1, n-1})\cong \B_2(\overline{K}_{n-1})$, which Example~\ref{ex:Bell-2-color-indep} establishes as $Q_{n-2}$ with edges between antipodal vertices.

\subsection{Double brooms} We now handle the case of double brooms (see Figure~\ref{fig:double_broom}). Because a double broom $B(3,a,b)$ of order $n$ is determined by $a$, it suffices to show that the number of edges in the Bell 3-coloring graph of $B(3,a,b)$ determines $a$. We prove the following lemma and note that from this edge count, we can uniquely recover $a$.

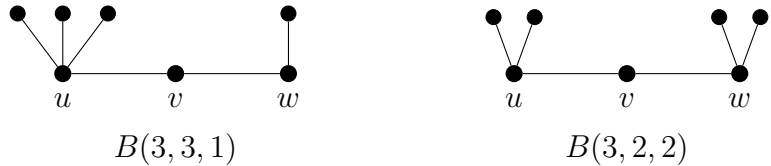
\begin{figure}[h]
    \centering
    \begin{tikzpicture}[
        main_vertex/.style={circle, draw, fill=black, thick, inner sep=2pt},
        leaf_vertex/.style={circle, draw,  fill=black,  inner sep=2pt},
        scale=1.0
    ]

    \begin{scope}
        \node[main_vertex, label=below:$u$] (u1) at (-1.5, 0) {};
        \node[main_vertex, label=below:$v$] (v1) at (0, 0) {};
        \node[main_vertex, label=below:$w$] (w1) at (1.5, 0) {};
        \draw (u1)--(v1)--(w1);

        \node[leaf_vertex] (l1a) at (-2.1, 0.8) {};
        \node[leaf_vertex] (l1b) at (-1.5, 0.8) {};
        \node[leaf_vertex] (l1c) at (-0.9, 0.8) {};
        \draw (u1)--(l1a); \draw (u1)--(l1b); \draw (u1)--(l1c);

        \node[leaf_vertex] (l1d) at (1.5, 0.8) {};
        \draw (w1)--(l1d);
         \node at (0, -1) {$B(3, 3, 1)$};
    \end{scope}

    \begin{scope}[xshift=6cm]
         \node[main_vertex, label=below:$u$] (u2) at (-1.5, 0) {};
        \node[main_vertex, label=below:$v$] (v2) at (0, 0) {};
        \node[main_vertex, label=below:$w$] (w2) at (1.5, 0) {};
        \draw (u2)--(v2)--(w2);

        \foreach \angle in {70, 110} {
            \node[leaf_vertex] (ua\angle) at ($(u2) + (\angle:0.8)$) {};
            \draw (u2) -- (ua\angle);
            \node[leaf_vertex] (wa\angle) at ($(w2) + (180-\angle:0.8)$) {};
            \draw (w2) -- (wa\angle);
        }
         \node at (0, -1) {$B(3, 2, 2)$};
    \end{scope}

    \end{tikzpicture}
    \caption{Examples of double broom graphs $B(3, a, b)$, constructed from a central path $u-v-w$ by adding $a$ leaves to $u$ and $b$ leaves to $w$.}
    \label{fig:double_broom}
\end{figure}

\begin{lemma}\label{lem:double-broom-reconstruction}
The number of edges in the Bell 3-coloring graph of $T=B(3, a, b)$ is
\[
    |E(\B_3(T))| = (2(a+b)+1)2^{a+b-1} + 2^a + 2^b - 1.
\]
\end{lemma}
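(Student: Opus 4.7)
The strategy is to count edges of $\B_3(T)$ by grouping them according to which vertex of $T$ realizes them. For each $x\in V(T)$, let $N_x$ denote the number of stable $3$-partitions $P$ from which moving $x$ to some other part yields a distinct stable $3$-partition. Since, for fixed $x$, the relation $P\sim_x Q$ (with $Q\neq P$) is symmetric and pairs each such $P$ with a unique partner $Q$, the number of edges realized by $x$ equals $N_x/2$. Combining this with Remark~\ref{rem:double-edge-responsible} yields the realizer identity
\[
\tfrac12\sum_{x\in V(T)} N_x \;=\; |E(\B_3(T))| + E_d,
\]
where $E_d$ denotes the number of doubly realized edges in $\B_3(T)$.

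As a preliminary step, I would partition the stable $3$-partitions of $T$ into two types based on the placement of the path $u,v,w$. In \emph{Type~A} partitions, $u$ and $w$ share a part (which is then forced to equal $\{u,w\}$), $v$ lies in a second part, and the third part contains only leaves; in \emph{Type~B} partitions, $u,v,w$ occupy three distinct parts. Each remaining $u_i$ has two admissible placements (any part not containing $u$), and similarly for each $w_j$, and a short multiset-uniqueness check (the three parts in each type are pairwise distinct because they have different anchors) shows that each type contributes exactly $2^{a+b}$ partitions, recovering the expected $2^{n-2}=2^{a+b+1}$ total. For $E_d$, Lemma~\ref{lem:double_edge} reduces the count to pairs $\{x,y\}$ of nonadjacent vertices with $V(T)\setminus\{x,y\}$ independent in $T$; scanning the edges $uv,\,vw,\,uu_i,\,ww_j$ shows that only $\{u,w\}$ covers them all, so $E_d=1$, witnessed by the unique edge between $\{\{u,w\},\emptyset,V(T)\setminus\{u,w\}\}$ and $\{\{u\},\{w\},V(T)\setminus\{u,w\}\}$.

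The bulk of the work is the vertex-by-vertex computation of $N_x$ using the Type A/B split. By symmetry, there are five classes: $\{v\}$, $\{u\}$, $\{w\}$, the $a$ leaves $u_i$, and the $b$ leaves $w_j$. I expect $N_v=2^{a+b}$, because $v$ is blocked in every Type~B partition (its neighbors $u,w$ occupy the other two parts) and always movable to the third part in every Type~A partition; $N_u=2^{b+1}$ and symmetrically $N_w=2^{a+1}$, because moving $u$ in either type requires every $u_i$ to share $v$'s part, leaving $2^b$ free choices for the $w_j$ distribution in each type; and $N_{u_i}=N_{w_j}=2^{a+b+1}$, since each leaf can always be moved to the unique part lacking its sole neighbor, and the only move that would collapse to the same multiset (singleton leaf sent to an empty part) would force $V(T)-u_i$ to be independent, which fails once $a+b\ge 3$. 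The main obstacle is the careful bookkeeping for $N_u$ and $N_w$: within both types one must verify that the blocking condition is exactly ``all $u_i$ lie with $v$'' and that the resulting move never produces a multiset coincidence, despite the empty-part and singleton-part cases that a priori could cause one. Once these five values are in hand, plugging into the realizer identity gives
\[
|E(\B_3(T))|=\tfrac{1}{2}\!\left(2^{a+b}+2^{a+1}+2^{b+1}+(a+b)\,2^{a+b+1}\right)-1,
\]
which rearranges to the stated $(2(a+b)+1)\,2^{a+b-1}+2^a+2^b-1$.
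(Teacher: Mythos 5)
Your proof is correct, and it takes a genuinely different route from the paper's. The paper decomposes $V(\B_3(T))$ by the placement of the nonadjacent centers $u,w$: partitions with $\{u,w\}$ as a part form a block isomorphic to $\B_2(\overline{K}_{a+b+1})$, the rest form a block isomorphic to the hypercube $Q_{a+b}$, and the cross-edges are counted directly. You instead count by \emph{realizer vertex}: using the observation (implicit in Lemma~\ref{lem:max-degree-B3T}) that in a tree each $x\in V(T)$ admits at most one legal move from any stable $3$-partition, the sum $\sum_x N_x$ double-counts each edge once per realizer, giving the accounting identity $\tfrac12\sum_x N_x = |E|+E_d$. You then compute $E_d=1$ via Lemma~\ref{lem:double_edge} (the only pair whose removal leaves an independent set is $\{u,w\}$) and evaluate $N_v=2^{a+b}$, $N_u=2^{b+1}$, $N_w=2^{a+1}$, $N_{u_i}=N_{w_j}=2^{a+b+1}$ by the same Type~A/Type~B split the paper uses for its block decomposition. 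The arithmetic checks out and reproduces the stated formula. Your argument is more local and elementary; the paper's reveals more of the global graph structure of $\B_3(T)$ (identifying the two induced blocks as familiar graphs), which is what it needs elsewhere in the section. Both hinge on the same stable-partition structure and the characterization of doubly realized edges. One small caveat: you should state explicitly (or cite) the fact that each $x\in V(T)$ has at most one legal move from any partition, since the realizer identity depends on it; the paper proves this inside Lemma~\ref{lem:max-degree-B3T}, and it uses that every vertex of a tree has a neighbor.
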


\begin{proof} 
The vertex set $V(\B_3(T))$ partitions into two sets based on the placement of the nonadjacent centers $u$ and $w$:
\begin{align*}
    V_0 &= \{P \in \B_3(T) : u, w \text{ lie in the same part of } P\}, \\
    V_1 &= \{P \in \B_3(T) : u, w \text{ lie in different parts of } P\}.
\end{align*}
Since $v$ is adjacent to both $u$ and $w$, any $P \in V_0$ must contain the part $\{u,w\}$; the remaining $n-2$ vertices form an independent set $S$. Thus the induced subgraph $\B_3(T)[V_0]$ is isomorphic to $\B_2(S) \cong \B_2(\overline{K}_{a+b+1})$. From Example~\ref{ex:Bell-2-color-indep}:
\[
    |E(V_0)| = (a+b+1)2^{a+b-1}.
\]
For $P \in V_1$, the vertices $u, v, w$ must occupy distinct parts. The $a$ leaves at $u$ have 2 available colors each, and the $b$ leaves at $w$ have 2 available colors each. No adjacencies exist between these leaves. Thus, the induced subgraph $\B_3(T)[V_1]$ is isomorphic to the hypercube $Q_{a+b}$, so:
\[
    |E(V_1)| = (a+b)2^{a+b-1}.
\]
Finally, edges between $V_0$ and $V_1$ occur when $u$ moves to the part containing $w$ (possible only if no neighbor of $u$ is in that part) or vice versa.
A partition $P \in V_1$ admits a move of $u$ to $w$'s part if and only if all $a$ neighbors of $u$ are in $v$'s part. There are $2^b$ such partitions.
Similarly, $P$ admits a move of $w$ to $u$'s part if and only if all $b$ neighbors of $w$ are in $v$'s part. There are $2^a$ such partitions.
Exactly one partition (where all leaves are with $v$) allows both moves, producing the same neighbor in $V_0$. Thus, the number of cross-edges is:
\[
    |E(V_0, V_1)| = 2^a + 2^b - 1.
\]
Summing these gives the total edge count:
\[
    |E(\B_3(T))| = (2(a+b)+1)2^{a+b-1} + 2^a + 2^b - 1. \qedhere 
\]
\end{proof}

Using Lemma~\ref{lem:double-broom-reconstruction}, we can distinguish two double brooms $B(3, a, b)$ and $B(3, c, d)$. Indeed, suppose $B(3, a, b)$ and $B(3,c, d)$ have isomorphic Bell $3$-coloring graphs. Since the number of vertices $2^{n-2}$ fixes the sum $a+b$, the term $(2(a+b)+1)2^{a+b-1}$ is invariant. The equality of edge counts implies $2^a + 2^b = 2^c + 2^d$, which leads to the equality $\{a,b\}=\{c, d\}$, as desired.

\subsection{Generic trees}  Suppose $|Y|\geq |X|\geq 3$. From the proof of Lemma~\ref{lem:degree-of-bipartition}, the neighbors of the bipartition $Z$ in $\B_3(T)$ are exactly the partitions
\[
Z_w \colonequals
\begin{cases}
\{X-w,\,Y,\,\{w\}\} & \text{ if }w\in X,\\[2pt]
\{X,\,Y-w,\,\{w\}\} & \text{ if }w\in Y,
\end{cases}
\]
each obtained by moving a single vertex $w$ to the third part.

Lemma~\ref{lem:generic-tree-reconstruction} shows that a square in $\B_3(T)$ incident to $Z$ as in Figure~\ref{fig:ZuZvR-diamond} certifies the nonadjacency of a pair of vertices in $T$.

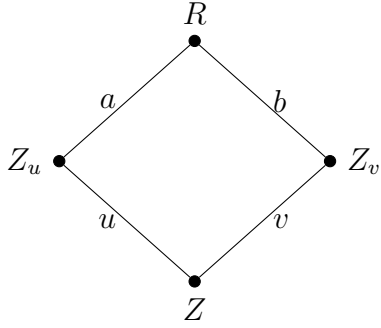
\begin{figure}[ht]
\centering
\begin{tikzpicture}[
    vertex/.style={circle, draw, fill=black, inner sep=1.5pt},
    scale=1.0
]
  \node[vertex, label=below:$Z$]   (Z)  at (0,0) {};
  \node[vertex, label=left:$Z_u$]  (Zu) at (-1.8,1.6) {};
  \node[vertex, label=right:$Z_v$] (Zv) at (1.8,1.6) {};
  \node[vertex, label=above:$R$]   (R)  at (0,3.2) {};

  \draw (Z)  -- node[left]  {$u$} (Zu);
  \draw (Z)  -- node[right] {$v$} (Zv);
  \draw (Zu) -- node[left]  {$a$} (R);
  \draw (Zv) -- node[right] {$b$} (R);
\end{tikzpicture}
\caption{Square formed by $Z$, $Z_u$, $Z_v$, and a common neighbor $R$ with responsible vertices labeled.}
\label{fig:ZuZvR-diamond}
\end{figure}

\begin{lemma}\label{lem:generic-tree-reconstruction}
Let $T$ be a tree of order $n\ge 6$ with bipartition $Z = \{X,Y,\emptyset\}$ where $|X|\ge 3$ and $|Y|\ge 3$. For distinct vertices $u,v\in V(T)$, the vertices $Z_u$ and $Z_v$ have a common neighbor in $\B_3(T)$ distinct from $Z$ if and only if $u$ and $v$ are nonadjacent in $T$.
\end{lemma}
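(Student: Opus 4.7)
The plan is to prove the two implications separately: sufficiency by an explicit construction of the common neighbor, and necessity by a case analysis that enumerates all partitions of the required form.

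\textbf{Sufficiency ($uv \notin E(T) \Rightarrow$ common neighbor).} Depending on the bipartition classes of $u$ and $v$, I will set
\[
R \colonequals
\begin{cases}
\{X-u-v,\,Y,\,\{u,v\}\} & \text{if } u,v \in X,\\
\{X,\,Y-u-v,\,\{u,v\}\} & \text{if } u,v \in Y,\\
\{X-u,\,Y-v,\,\{u,v\}\} & \text{if } u \in X,\ v \in Y.
\end{cases}
\]
In each case, $\{u,v\}$ is independent in $T$: automatic when $u$ and $v$ share a bipartition class, and by hypothesis when they belong to different classes. Thus $R$ is a valid stable $3$-partition. A direct restriction check gives $R-v = Z_u-v$ and $R-u = Z_v-u$, so $R \sim_v Z_u$ and $R \sim_u Z_v$. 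Finally, the hypothesis $|X|, |Y| \ge 3$ ensures that the parts $X-u-v$, $Y-u-v$, $X-u$, and $Y-v$ are all nonempty, so $R$ has three nonempty parts and therefore $R \ne Z$.

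\textbf{Necessity ($uv \in E(T) \Rightarrow$ no common neighbor $\ne Z$).} WLOG $u \in X$ and $v \in Y$. Suppose a common neighbor $R \ne Z$ exists. Since $Z_u = \{X-u,\,Y,\,\{u\}\}$ has no empty part, Lemma~\ref{lem:double_edge} implies $Z_u \sim R$ is singly realized by a unique vertex $a$, and analogously $Z_v \sim R$ by some unique $b$. If $a = u$, then moving $u$ into $Y$ is forbidden (since $v \in Y$ is its neighbor), so the only valid move returns $u$ to $X-u$, yielding $R = Z$; hence $a \ne u$. Enumerating the remaining valid single-vertex moves from $Z_u$ shows that $R$ has one of the three forms
\begin{align*}
\text{(I)}\quad & \{X-u+a,\,Y-a,\,\{u\}\}\ \text{with } a \in Y \text{ a leaf of } T \text{ adjacent to } u,\\
\text{(II)}\quad & \{X-u-a,\,Y,\,\{u,a\}\}\ \text{with } a \in X-u,\\
\text{(III)}\quad & \{X-u,\,Y-a,\,\{u,a\}\}\ \text{with } a \in Y,\ au \notin E(T).
\end{align*}
Symmetrically, $R$ has one of three analogous forms (I$'$), (II$'$), (III$'$) arising from $Z_v$.

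\textbf{Main obstacle: the case check.} The contradiction comes from refuting all $3\times 3$ pairings of these forms, which is the delicate part of the proof. Eight of the nine pairings are ruled out using $|X|, |Y| \ge 3$: whenever a singleton ($\{u\}$ or $\{v\}$) or an intact full class ($X$ or $Y$) appears in one description, the size lower bounds prevent that part from equaling any part in the other description (the disjointness of $X$ and $Y$ is also used to eliminate any cross-class matching of the large parts). The only surviving pairing is (III)--(III$'$), where matching the unique size-$2$ parts forces $\{u,a\} = \{v,b\}$; combining this with $u,b \in X$ and $a,v \in Y$ forces $u = b$ and $a = v$, and the defining condition $bv \notin E(T)$ in form (III$'$) then reads $uv \notin E(T)$, contradicting the hypothesis. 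The main difficulty is to enumerate the forms (I)--(III) exhaustively and to track the bipartition-class information precisely enough to push the final (III)--(III$'$) pairing through.
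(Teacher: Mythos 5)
Your proof is correct, but it takes a noticeably more enumerative route than the paper's in the necessity direction. The paper first argues, from $Z_u - a = R - a$ and $Z_v - b = R - b$, that $Z_u - \{a,b\} = Z_v - \{a,b\}$, and since $Z_u$ and $Z_v$ differ from $Z$ only at $u$ and $v$ respectively (hence from each other only at $u$ and $v$), this immediately forces $\{a,b\}=\{u,v\}$. A short follow-up then rules out $a=u$ and $b=v$, so $Z_u\sim_v R$ and $Z_v\sim_u R$; the contradiction comes from examining the single moves of $v$ in $Z_u$ and of $u$ in $Z_v$ and invoking that adjacent tree vertices cannot both be leaves. Your version replaces this pinning-down of the responsible vertices by enumerating the three possible forms $R$ could take as a neighbor of $Z_u$ and the three as a neighbor of $Z_v$, then refuting all nine pairings. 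This works and is arguably more explicit, at the cost of more bookkeeping. Two small imprecisions worth fixing: first, to conclude that $Z_uR$ is singly realized, noting that $Z_u$ has no empty part is not by itself sufficient---by Lemma~\ref{lem:double_edge}, $Z_u$ would then have to be the split side $\{\{x\},\{y\},S\}$ with two singleton parts, which you also need to rule out (it holds because $|X-u|\ge 2$ and $|Y|\ge 3$); the paper's cleaner justification is that a doubly realized edge in $\B_3$ requires a partition of the form $\{\{x\},\{y\},V(T)\setminus\{x,y\}\}$, which cannot be stable when both bipartition classes have size at least $3$. Second, in the (III)--(III$'$) case, the size-$2$ part is not necessarily unique (when $|X|=3$, say, $X-u$ also has size $2$); the matching $\{u,a\}=\{v,b\}$ is actually forced by the bipartition-class structure---each of $X-u$, $Y-a$, $X-b$, $Y-v$ is contained entirely in one of $X$ or $Y$, while $\{u,a\}$ and $\{v,b\}$ straddle both---rather than by size uniqueness, so you should lean on the class argument (which you already gesture at) rather than the size argument.
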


\begin{proof} \emph{($\Leftarrow$)} Suppose $u$ and $v$ are nonadjacent. Define the stable $3$-partition:
\[
R \colonequals \{X\setminus\{u,v\},\, Y\setminus\{u,v\},\, \{u,v\}\}.
\]
Because $|X|,|Y|\ge 3$, both $X\setminus\{u,v\}$ and $Y\setminus\{u,v\}$ are nonempty, so $R\neq Z$. Moreover,
\[
R-v = \{X\setminus\{u,v\},\,Y\setminus\{u,v\},\,\{u\}\} = Z_u - v,
\]
so $Z_u\sim_v R$, and symmetrically $Z_v\sim_u R$. Thus, $R\neq Z$ is a common neighbor of $Z_u$ and $Z_v$.

\emph{($\Rightarrow$)} Conversely, suppose $Z_u$ and $Z_v$ have a common neighbor $R\neq Z$:
\[
Z_u \sim_a R,\qquad Z_v \sim_b R
\]
for some vertices $a,b$.

As $T$ is generic, no edge of $\B_3(T)$ is doubly realized: Lemma~\ref{lem:double_edge} requires a stable partition with parts $\{x\},\{y\},V(T)\setminus\{x,y\}$, which is impossible because $|X|,|Y|\geq 3$. Consequently, each edge has a \emph{unique} responsible vertex, so $a$ and $b$ are uniquely determined. From
\[
Z_u - a = R - a \quad\text{and}\quad Z_v - b = R - b
\]
we deduce
\[
Z_u - \{a,b\} = R - \{a,b\} = Z_v - \{a,b\}.
\]
But $Z_u$ and $Z_v$ differ from $Z$ only at $u$ and $v$, so they can only differ from each other at $u$ and $v$. Thus, $\{a, b\}=\{u, v\}$. If $a=u$, then $Z_u - u = Z - u = R - u$, so $R$ is obtained from $Z-u$ by deciding whether $u$ is in its own part or not; the only stable options are $Z$ and $Z_u$, contradicting the assumption that $R\notin \{Z, Z_u\}$. We deduce that $a\neq u$, and similarly $b\neq v$. So, $a=v$ and $b=u$, giving us $Z_u \sim_v R$ and $Z_v \sim_u R$.

Now, assume for a contradiction that $u$ and $v$ are adjacent. Consider the possible moves:

\begin{itemize}
    \item In $Z_u = \{X-u,Y,\{u\}\}$, the vertex $v$ lies in $Y$ and all neighbors of $v$ lie in $X$, including $u$ in the singleton part $\{u\}$. Any legal move of $v$ must send $v$ to a part containing no neighbor of $v$.  
  If $v$ has a neighbor in $X\setminus\{u\}$, then $v$ has no legal move at all. If $v$ is a leaf with a unique neighbor $u$, there is exactly one legal move, sending $v$ to $X\setminus\{u\}$, which yields a unique neighbor $R_1$ of $Z_u$ via $v$.
  \item  Symmetrically, in $Z_v = \{X,Y-v,\{v\}\}$, any legal move of $u$ (if it exists) produces a unique neighbor $R_2$ of $Z_v$ via $u$, and this can occur only if $u$ is a leaf with a unique neighbor $v$.
\end{itemize}
In a tree with at least three vertices, two adjacent vertices cannot both be leaves, so at most one of $R_1$ and $R_2$ exists, and in any case $R_1\neq R_2$. Thus, there is no partition $R\neq Z$ that is adjacent to both $Z_u$ via $v$ and $Z_v$ via $u$, a contradiction.
\end{proof}

This lemma is particularly powerful when combined with our ability to identify from $\B_3(T)$ the unique vertex $Z$ whose corresponding partition has an empty part. We can identify this vertex because Lemma~\ref{lem:degree-of-bipartition} says that it has degree $n$, and Lemma~\ref{lem:degree-n-implies-bipartition} ensures it is the unique vertex of maximum degree. With $Z$ identified, we can label each of its neighbors in $\B_3(T)$ by a vertex of $T$ and then apply Lemma~\ref{lem:generic-tree-reconstruction} to identify exactly those pairs of vertices that are adjacent. This adjacency matrix completely describes the tree, and the corollary below follows. 

\begin{corollary}
    Let $T_1$ and $T_2$ be two trees of order $n$ with bipartitions $Z_1=\{X_1,Y_1,\emptyset\}$ and $Z_2=\{X_2,Y_2,\emptyset\}$ with $\abs{X_1},\abs{X_2},\abs{Y_1},\abs{Y_2}\ge 3$. Then $\B_3(T_1)\cong \B_3(T_2)$ if and only if $T_1\cong T_2$.
\end{corollary}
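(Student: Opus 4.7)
The $(\Leftarrow)$ direction is immediate: any tree isomorphism induces an isomorphism of Bell $3$-coloring graphs. For the $(\Rightarrow)$ direction, my plan is to take an arbitrary isomorphism $\varphi:\B_3(T_1)\xrightarrow{\sim}\B_3(T_2)$ and read off from it a graph isomorphism $\sigma:T_1\to T_2$ by examining the $n$-neighborhood of a single distinguished vertex in each Bell coloring graph.

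\textbf{Step 1: pinning down the bipartition vertex.} I would first invoke Lemmas~\ref{lem:max-degree-B3T},~\ref{lem:degree-n-implies-bipartition},~and~\ref{lem:degree-of-bipartition}: under the genericity hypothesis $|X_i|,|Y_i|\ge 3$, the bipartition $Z_i=\{X_i,Y_i,\emptyset\}$ is the \emph{unique} vertex of degree $n$ in $\B_3(T_i)$ (every other vertex has three nonempty parts and therefore degree at most $n-1$, while $Z_i$ achieves $n$). Since $\varphi$ preserves degree, $\varphi(Z_1)=Z_2$.

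\textbf{Step 2: labeling neighbors by vertices of the tree.} The neighbors of $Z_i$ are exactly the partitions $Z_v$ for $v\in V(T_i)$, and under the hypothesis $|X_i|,|Y_i|\ge 3$ the case analysis in Lemma~\ref{lem:degree-of-bipartition} shows that these $n$ partitions are pairwise distinct and distinct from $Z_i$. Thus $v\mapsto Z_v$ gives a canonical bijection $V(T_i)\to N_{\B_3(T_i)}(Z_i)$. Restricting $\varphi$ to these neighborhoods then yields a bijection $\sigma:V(T_1)\to V(T_2)$ defined by the equality $\varphi(Z_v)=Z_{\sigma(v)}$ for every $v\in V(T_1)$.

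\textbf{Step 3: transporting adjacency.} Finally, I would invoke Lemma~\ref{lem:generic-tree-reconstruction}, which characterizes nonadjacency in $T_i$ purely in terms of $\B_3(T_i)$: for distinct $u,v$, one has $u\not\sim_{T_i}v$ if and only if $Z_u$ and $Z_v$ have a common neighbor in $\B_3(T_i)$ other than $Z_i$. Because this condition refers only to the adjacency structure of $\B_3(T_i)$ together with the distinguished vertex $Z_i$ (which $\varphi$ sends to $Z_2$), it is preserved by $\varphi$, so $u\not\sim_{T_1}v \iff \sigma(u)\not\sim_{T_2}\sigma(v)$. Hence $\sigma$ is a graph isomorphism $T_1\to T_2$.

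The argument is essentially mechanical once the preceding lemmas are in place, so I do not anticipate a genuine obstacle. The only subtlety worth flagging is that the genericity hypothesis $|X_i|,|Y_i|\ge 3$ is used in two distinct places --- in Step~1 to single out $Z_i$ by degree, and in Step~3 to apply Lemma~\ref{lem:generic-tree-reconstruction} --- which explains why the star and double broom cases required separate combinatorial treatments and cannot be absorbed into the generic argument.
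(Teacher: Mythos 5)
Your proposal is correct and follows essentially the same route as the paper: identify $Z_i$ as the unique vertex of maximum degree $n$ via Lemmas~\ref{lem:max-degree-B3T}, \ref{lem:degree-n-implies-bipartition}, and~\ref{lem:degree-of-bipartition}, label its neighbors $Z_v$ by vertices of the tree, and use Lemma~\ref{lem:generic-tree-reconstruction} to read off the adjacency matrix, so any isomorphism of the Bell $3$-coloring graphs transports to a tree isomorphism. Your explicit handling of the induced bijection $\sigma$ and the two separate uses of the genericity hypothesis is a fair, slightly more detailed write-up of the argument the paper sketches in the paragraph preceding the corollary.
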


We now combine the classification of tree types with the specific reconstruction results for each case to prove the main result on Bell $3$-colorings of trees.

\begin{theorem}\label{thm:tree-reconstruction}
For trees $T_1$ and $T_2$, we have $\B_3(T_1)\cong \B_3(T_2)$ if and only if $T_1\cong T_2$.
\end{theorem}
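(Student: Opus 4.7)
The reverse implication is immediate since $\B_3$ is a graph invariant, so the task reduces to reconstructing $T$ (up to isomorphism) from $\B_3(T)$. The plan is to assemble the preceding lemmas into a three-step algorithmic reconstruction: first recover the order $n = |V(T)|$, then determine which of the three tree categories (star, double broom, generic) $T$ belongs to, and finally reconstruct $T$ within that category.

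For the first step, the order $n$ is determined by the formula $|V(\B_3(T))| = 2^{n-2}$ from \cite{DP09}, so $n$ is recovered from the vertex count. Trees with $n \le 5$ can be handled by direct finite inspection, so I would assume $n \ge 6$. I would then read off the degree sequence $d_1 \ge d_2 \ge \dots \ge d_{2^{n-2}}$ of $\B_3(T)$ and apply Lemma~\ref{lem:degree-classification}: the Bell graph satisfies $d_1 = d_{2^{n-2}}$ precisely when $T$ is a star, satisfies $d_1 = d_2 > d_{2^{n-2}}$ precisely when $T$ is a double broom, and satisfies $d_1 > d_2$ precisely when $T$ is generic. The isomorphism type of $\B_3(T)$ therefore determines the category unambiguously.

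In the star case, the category together with $n$ already determines $T \cong K_{1,n-1}$. In the double broom case, both trees satisfy $a+b = n-3$, so this sum is fixed, and by Lemma~\ref{lem:double-broom-reconstruction} the edge count of $\B_3(T)$ then determines the value $2^a + 2^b$; since the map $b \mapsto 2^{(n-3)-b} + 2^b$ is strictly monotone on $\{0,1,\dots,\lfloor (n-3)/2 \rfloor\}$, this recovers the unordered pair $\{a,b\}$ and hence $T \cong B(3,a,b)$. In the generic case, the Bell graph has a unique vertex $Z$ of degree $n$ by Lemmas~\ref{lem:max-degree-B3T}, \ref{lem:degree-n-implies-bipartition}, and~\ref{lem:degree-of-bipartition}, and its $n$ distinct neighbors $\{Z_w : w \in V(T)\}$ serve as canonical labels for the vertices of $T$. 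Lemma~\ref{lem:generic-tree-reconstruction} then certifies nonadjacency of $u$ and $v$ in $T$ by the existence of a common neighbor of $Z_u$ and $Z_v$ in $\B_3(T)$ distinct from $Z$; this reconstructs the adjacency matrix of $T$, and hence $T$ itself.

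The main conceptual difficulty is concentrated in the generic case, since stars are pinned down by $n$ alone and double brooms by a single further numerical invariant. The crucial step — showing that adjacency in $T$ can be read off purely from local combinatorics of $\B_3(T)$ around its unique maximum-degree vertex — is precisely the content of Lemma~\ref{lem:generic-tree-reconstruction}, so the remainder of the proof amounts to organizing the case split and invoking the appropriate reconstruction result in each category.
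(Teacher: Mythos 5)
Your proposal is correct and follows essentially the same route as the paper's proof: recover $n$ from the vertex count $2^{n-2}$, use Lemma~\ref{lem:degree-classification} to identify the category, then handle stars by order alone, double brooms via the edge count of Lemma~\ref{lem:double-broom-reconstruction}, and generic trees by reconstructing the adjacency matrix from the neighbors of the unique maximum-degree vertex $Z$ using Lemma~\ref{lem:generic-tree-reconstruction}. Your monotonicity argument for recovering $\{a,b\}$ from $2^a+2^b$ with $a+b$ fixed is a slightly more explicit justification of the step the paper states directly, but otherwise the two arguments coincide.
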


\begin{proof}
One of the implications is immediate. For the converse, assume $\B_3(T_1) \cong \B_3(T_2)$. Since the graphs have $2^{n-2}$ vertices, $T_1$ and $T_2$ must have the same order. By Lemma~\ref{lem:degree-classification}, $T_1$ and $T_2$ must belong to the same class of trees (generic, double broom, or star) based on the degree sequence of the Bell coloring graph.

If they are stars, they are trivially isomorphic as they share the same order. If they are double brooms, Lemma~\ref{lem:double-broom-reconstruction} shows that nonisomorphic double brooms yield Bell coloring graphs with different edge counts, forcing $T_1 \cong T_2$. If they are generic, Lemma~\ref{lem:generic-tree-reconstruction} allows the unique reconstruction of the adjacency matrix of the tree from the common neighbors of the vertices adjacent to the unique bipartition vertex $Z$.
\end{proof}

\section{Reconstruction from the Bell Multigraph}\label{sec:reconstruction-multigraph}

In this section, we prove that a graph $G$ of order $n$ can be reconstructed (up to universal vertices) from its Bell coloring multigraph $\BellMulti{n}{G}$. We first identify the singleton partition $Q=\{\{v_1\},\dots,\{v_n\}\}$ (up to automorphism), use the neighborhood of $Q$ in $\BellMulti{n}{G}$ to determine the line graph $L(\overline{G})$, and finally invoke Whitney's isomorphism theorem~\cite{Whi32} to recover $\overline{G}$ from its line graph. This last step requires distinguishing triangles in the line graph that correspond to $K_3$ versus $K_{1,3}$, as these are the only nonisomorphic connected graphs with isomorphic line graphs~\cite{Whi32}.

Throughout we fix a graph $G$ with vertex set $\{v_1,\dots,v_n\}$ and work with $\BellMulti{n}{G}$ and its underlying simple graph $\B_n(G)$.
Let
\[
Q \colonequals \bigl\{\{v_1\}, \{v_2\}, \dots,\{v_n\}\bigr\}
\]
be the \emph{singleton partition}. We repeatedly use the double-edge characterization (Lemma~\ref{lem:double_edge}): an edge $PP'$ has multiplicity $2$ if and only if, up to relabeling parts, one of $P,P'$ contains $\{a,b\}$ and $\emptyset$ while the other contains $\{a\}$ and $\{b\}$ for some nonedge $ab$ of $G$, and $P,P'$ agree on all other parts.

\subsection{Totally-doubled partitions}

Before we are ready to construct $L(\overline{G})$ from $\BellMulti{n}{G}$, we identify properties of the singleton partition and structurally similar partitions called totally-doubled partitions.

\begin{defn}
A vertex $P\in V(\BellMulti{n}{G})$ is \emph{totally-doubled} if every edge incident with $P$ has multiplicity $2$. Let $\mathcal{D}(G)$ denote the set of totally-doubled partitions.
\end{defn}

The partition $Q$ lies in $\mathcal{D}(G)$: every neighbor of $Q$ is obtained by merging two singletons $\{u\},\{v\}$ with $u\nsim v$, and by Lemma~\ref{lem:double_edge} such an edge has multiplicity $2$.

\begin{lemma}\label{lem:D-max-two}
Let $P\in\mathcal{D}(G)$. Then every nonempty part of $P$ has size at most $2$.
\end{lemma}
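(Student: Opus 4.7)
The plan is to argue by contradiction: assume some part $A$ of $P$ has $|A|\ge 3$, produce a concrete neighbor $P'$ of $P$ via a single-vertex move, and then apply Lemma~\ref{lem:double_edge} to the edge $PP'$ (which must be doubly realized because $P\in\mathcal{D}(G)$) to force $|A|\le 2$, giving the contradiction.

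First I would show that any such $P$ with an oversized part has an empty part available as a destination. Since $P$ is a partition of $V(G)$ into exactly $n=|V(G)|$ parts and $A$ alone uses up at least $3$ vertices, the remaining $n-1$ parts split at most $n-3$ vertices and so include at least $2$ empty parts. Pick any $v\in A$ and let $P'$ be obtained from $P$ by moving $v$ into one of these empty parts; the singleton $\{v\}$ is automatically independent, so $P'\in\cP_n(G)$, and $P'\neq P$ because $P'$ has strictly fewer empty parts than $P$.

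By construction, $P\sim_v P'$, so $PP'$ is an edge of $\BellMulti{n}{G}$. Since $P\in\mathcal{D}(G)$, this edge has multiplicity $2$, so there exists some $w\in V(G)$ with $w\neq v$ and $P\sim_w P'$. Lemma~\ref{lem:double_edge} then forces a decomposition
\[
P = \{\{v,w\},\emptyset\}\cup R, \qquad P' = \{\{v\},\{w\}\}\cup R,
\]
up to relabeling of parts. In particular, the part of $P$ containing $v$ is $\{v,w\}$, which has size $2$. This contradicts $v\in A$ with $|A|\ge 3$, completing the argument.

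No step looks genuinely hard: the main thing to be careful about is verifying that an empty part is always available (so that the move producing $P'$ is well-defined), and correctly quoting Lemma~\ref{lem:double_edge} to conclude that the part containing $v$ in $P$ must in fact be a pair. Both of these are routine once the counting of empty parts is laid out explicitly.
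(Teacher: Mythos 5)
Your proof is correct and follows essentially the same route as the paper: move a vertex of the oversized part into an empty part (which exists by the same counting) and apply Lemma~\ref{lem:double_edge} to contradict $P\in\mathcal{D}(G)$. The only cosmetic difference is that the paper concludes by noting a doubly realized edge must toggle a pair into two singletons (so a one-vertex move cannot be doubled), whereas you read off the forced decomposition and contradict $|A|\ge 3$ directly; both closings are equivalent.
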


\begin{proof}
Suppose $P$ has a part $X$ with $|X|\ge3$, and choose $w\in X$. Since $P$ has $n$ parts and $|X|\ge 3$, at least one part is empty. Move $w$ from $X$ into an empty part to obtain $P'$. Then $P'$ is stable and $P\sim_w P'$.

In this move, only $w$ changes its part. By Lemma~\ref{lem:double_edge}, however, an edge of multiplicity $2$ requires toggling a pair $\{a,b\}$ into singletons $\{a\}$ and $\{b\}$, so two vertices change parts. Thus $PP'$ has multiplicity $1$, contradicting $P\in\mathcal{D}(G)$.
\end{proof}

We now characterize the partitions in $\mathcal{D}(G)$ that maximize the degree in $\B_n(G)$. Given two distinct vertices $u, w$ in a graph $G$, we say that $u$ and $w$ are \emph{dominating twins} if 
\[
N(u) = N(w) = V(G)\setminus \{u, w\}.
\]
By definition, $u$ and $w$ must be nonadjacent. 

The following lemma shows that the totally-doubled vertices of maximum degree consist only of singleton parts and pairs, where each pair is a dominating twin.

\begin{lemma}\label{lem:pair-dominating-twins-max}
Let $P\in\mathcal{D}(G)$ be a vertex that maximizes the degree in $\B_n(G)$ among all vertices in $\mathcal{D}(G)$.
Write
\[
P =\bigl\{\{v_1\},\dots,\{v_r\},\{u_1,w_1\},\dots,\{u_s,w_s\}\bigr\},
\]
where all nonempty parts have size $1$ or $2$. Then for each $i$, the vertices $u_i$ and $w_i$ are dominating twins.
\end{lemma}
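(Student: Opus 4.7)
The plan is to compare the degree of $P$ in $\B_n(G)$ to that of $Q$, the all-singleton partition (which lies in $\mathcal{D}(G)$). I will show that $\deg_{\B_n(G)}(P) \le \deg_{\B_n(G)}(Q)$ for every $P \in \mathcal{D}(G)$, with equality characterizing the dominating-twin property of pairs. Since $P$ is a maximizer, equality must hold and the conclusion follows.

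The first step establishes a singleton-pair adjacency constraint: for every singleton $\{v_j\}$ and pair $\{u_i, w_i\}$ of $P$, the vertex $v_j$ must be adjacent in $G$ to both $u_i$ and $w_i$. If instead $v_j$ and $u_i$ are nonadjacent, then shifting $u_i$ from $\{u_i, w_i\}$ into $\{v_j\}$ yields a valid neighbor $P'$ of $P$ via the single move of $u_i$. Comparing the affected parts of $P$ (namely $\{v_j\}$ and $\{u_i, w_i\}$) to those of $P'$ (namely $\{u_i, v_j\}$ and $\{w_i\}$), one sees that $PP'$ does not fit the template $\{\{a, b\}, \emptyset\} \leftrightarrow \{\{a\}, \{b\}\}$ of Lemma~\ref{lem:double_edge} in either orientation, so $PP'$ has multiplicity $1$, contradicting $P \in \mathcal{D}(G)$. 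The case of $v_j$ and $w_i$ is symmetric.

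Next, I enumerate the neighbors of $P$ in $\B_n(G)$. By Lemma~\ref{lem:double_edge} and $P \in \mathcal{D}(G)$, every neighbor arises either by splitting one of the $s$ pairs (using one of the $n - r - s = s$ empty parts) or by merging two nonadjacent singletons, giving $\deg_{\B_n(G)}(P) = s + M$, where $M$ is the number of nonedges of $G$ among the $r$ singletons. Letting $N$ denote the total number of nonedges of $G$, the decomposition by endpoint type in $P$ gives $N = M + s + X$, where the $s$ term counts the intra-pair nonedges $u_i w_i$, the mixed singleton-pair type contributes $0$ by Step~1, and $X \ge 0$ counts nonedges between vertices of distinct pairs. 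Since $\deg_{\B_n(G)}(Q) = N$ and $P$ is a degree-maximizer in $\mathcal{D}(G)$, the identity $\deg_{\B_n(G)}(P) = N - X$ combined with $\deg_{\B_n(G)}(P) \ge N$ forces $X = 0$. Combined with Step~1 and the independence $u_i \nsim w_i$, this yields $N_G(u_i) = V(G) \setminus \{u_i, w_i\}$ and symmetrically for $w_i$, so $u_i$ and $w_i$ are dominating twins.

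The main obstacle is the first step: verifying carefully that the swap move of $u_i$ between a pair and a singleton produces a multiplicity-$1$ edge. This reduces to checking that the altered parts of $P$ and $P'$ cannot match the Lemma~\ref{lem:double_edge} template in either orientation, which is a brief but explicit bookkeeping check. Once that is settled, the remaining decomposition of nonedges and the use of maximality is purely combinatorial and routine.
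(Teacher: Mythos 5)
Your proposal is correct, and the conclusion is reached by a decomposition that differs in its details from the paper's. The paper splits by adjacency pattern of an arbitrary vertex $x$: if $x$ were nonadjacent to both $u_i$ and $w_i$, moving $x$ \emph{into} the pair creates a size-three part and hence a multiplicity-$1$ edge, contradicting $P\in\mathcal{D}(G)$ (this gives dominance, $N(u_i)\cup N(w_i)=V(G)\setminus\{u_i,w_i\}$, with no appeal to maximality); if $x$ were adjacent to exactly one of them, the nonedge it witnesses contributes no neighbor of $P$, so $\deg(P)<|E(\overline G)|=\deg(Q)$, contradicting maximality. You instead split by where the endpoints of a nonedge sit in $P$: singleton--pair nonedges are excluded by moving $u_i$ \emph{out of} its pair into the singleton's part (a multiplicity-$1$ edge, since the differing parts --- equivalently the unchanged number of empty parts --- cannot match the template of Lemma~\ref{lem:double_edge} in either orientation), and pair--pair nonedges are excluded by the exact count $\deg(P)=s+M=|E(\overline G)|-X$ together with $\deg(Q)=|E(\overline G)|$ and maximality, forcing $X=0$. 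The ingredients are the same --- Lemma~\ref{lem:double_edge} to certify a multiplicity-$1$ move, plus a degree comparison with the singleton partition $Q$ --- and your degree formula $\deg(P)=s+M$ is exactly the paper's $|S(P)|$; what your version buys is an explicit accounting of the slack $X$, while the paper's version gets dominance purely structurally and only needs maximality to rule out the ``adjacent to exactly one'' case. The only step you should spell out fully in a final write-up is the template check in your Step~1; the cleanest phrasing is that the doubly-realized pattern changes the number of empty parts by one, whereas your move does not.
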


\begin{proof}
Since $P$ is stable, $u_iw_i\notin E(G)$ for each $i$.

\smallskip\noindent
\emph{Step 1 (dominance).}
Fix $i$ and suppose there exists $x\in V(G)\setminus\{u_i,w_i\}$ with $xu_i\notin E(G)$ and $xw_i\notin E(G)$. Then $\{u_i,w_i,x\}$ is stable. Let $P'$ be obtained from $P$ by moving $x$ into the part $\{u_i,w_i\}$. Then $P'$ is stable and $P\sim_x P'$.

Only $x$ changes its part between $P$ and $P'$, so the pattern from Lemma~\ref{lem:double_edge} does not occur. Thus, $PP'$ has multiplicity~$1$, contradicting $P\in\mathcal{D}(G)$. Hence
\[
N(u_i)\cup N(w_i)=V(G)\setminus\{u_i,w_i\}.
\]
\smallskip\noindent
\emph{Step 2 (twins).}
Suppose there exists
$x\in V(G)\setminus\{u_i,w_i\}$ with $xu_i\in E(G)$ and $xw_i\notin E(G)$. 

We first describe the neighbors of $P$ in $\B_n(G)$. Since $P \in \mathcal{D}(G)$, every incident edge has multiplicity $2$. By Lemma~\ref{lem:double_edge}, any neighbor is obtained via one of the following operations:
\begin{enumerate}
  \item[(a)] \emph{splitting a $2$-element part} $\{a,b\}$:
  replace $\{a,b\}$ and an empty part by the singletons $\{a\}$ and $\{b\}$;
  \item[(b)] \emph{merging two singletons} $\{a\},\{b\}$ with $ab\notin E(G)$:
  replace them by $\{a,b\}$ and an empty part.
\end{enumerate}
Define $S(P)\subseteq E(\overline{G})$ as the set of nonedges $ab$ of $G$ such that either $P$ contains the part $\{a,b\}$, or $P$ contains $\{a\}$ and $\{b\}$ as singleton parts. Then
\[
\deg_{\B_n(G)}(P)=|S(P)|.
\]
For $Q$, every nonedge $ab$ of $G$ gives a neighbor $Q_{ab}$ obtained by merging $\{a\}$ and $\{b\}$, so
\[
\deg_{\B_n(G)}(Q)=|E(\overline{G})|.
\]
In $P$, the vertex $w_i$ lies in the part $\{u_i,w_i\}$, so $w_i$ is not a singleton, and $\{x,w_i\}$ is not a part of $P$. Thus, $xw_i\notin S(P)$, while $xw_i$ is a nonedge of $G$. Hence, $|E(\overline{G})|>|S(P)|$, and
\[
\deg_{\B_n(G)}(P)
=|S(P)|
<|E(\overline{G})|
=\deg_{\B_n(G)}(Q),
\]
contradicting the maximality of $P$ in $\mathcal{D}(G)$. Therefore, no vertex is adjacent to exactly one of $u_i$ and $w_i$, and together with dominance we obtain
\[ 
N(u_i)=N(w_i)=V(G)\setminus\{u_i,w_i\}.  \qedhere
\]
\end{proof}

We are now ready to show that maximum degree totally-doubled vertices are indistinguishable from the singleton partition in a Bell coloring multigraph in the following sense.

\begin{lemma}\label{lem:involution-max-degree}
Let $P\in\mathcal{D}(G)$ have maximum degree in $\B_n(G)$ among all vertices in $\mathcal{D}(G)$. Then there exists an automorphism $\Psi$ of $\BellMulti{n}{G}$ such that $\Psi(P)=Q$.
\end{lemma}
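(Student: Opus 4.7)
The plan is to construct $\Psi$ explicitly as a composition of commuting involutions, one for each dominating twin pair occurring in $P$. By Lemma~\ref{lem:pair-dominating-twins-max}, we may write
\[
P = \bigl\{\{v_1\},\dots,\{v_r\},\{u_1,w_1\},\dots,\{u_s,w_s\}\bigr\},
\]
with $N(u_i)=N(w_i)=V(G)\setminus\{u_i,w_i\}$ for each $i$. For each twin pair, I define an involution $\Psi_i$ on $\cP_n(G)$ that toggles the pair's local state. The key observation is that, because every other vertex is adjacent to both $u_i$ and $w_i$, in any stable $n$-partition $R$, the part of $R$ containing $u_i$ must be either $\{u_i\}$ or $\{u_i,w_i\}$, and similarly for $w_i$. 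So $R$ is in one of two local configurations for the pair: \emph{together} (with $\{u_i,w_i\}$ as a part) or \emph{apart} (with $\{u_i\}$ and $\{w_i\}$ as separate singleton parts). The map $\Psi_i$ swaps these two configurations: either by splitting $\{u_i,w_i\}$ into $\{u_i\},\{w_i\}$ and absorbing one empty part, or by merging $\{u_i\},\{w_i\}$ into $\{u_i,w_i\}$ and creating an empty part. A counting argument (if $\{u_i,w_i\}$ is a part of $R$, then the $n$ vertices occupy at most $n-1$ non-empty parts, leaving $\ge 1$ empty part) ensures $\Psi_i$ is well-defined on all of $\cP_n(G)$.

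Next, I show each $\Psi_i$ is an automorphism of $\BellMulti{n}{G}$. Clearly $\Psi_i$ is a bijection, being an involution. For edge and multiplicity preservation, let $R\sim_v R'$. If $v\notin\{u_i,w_i\}$, then $R$ and $R'$ share the same $(u_i,w_i)$-state (the $v$-move cannot alter that part), and $\Psi_i$ acts identically on both sides; thus the same $v$-move witnesses $\Psi_i(R)\sim_v\Psi_i(R')$. If $v\in\{u_i,w_i\}$, the only legal destination part for $v$ is either an empty part or the part containing the other twin, so the $v$-move simply exchanges the two local states; hence $R'=\Psi_i(R)$, and the edge $RR'$ is fixed setwise by $\Psi_i$. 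Its multiplicity is $2$ by Lemma~\ref{lem:double_edge} (realized by both $u_i$ and $w_i$). Finally, if $RR'$ is any doubly realized edge with realizers $\{v,v'\}$, Lemma~\ref{lem:double_edge} forces $\{v,v'\}$ to be non-adjacent and to witness a split/merge pattern between $R$ and $R'$; if $\{v,v'\}\neq\{u_i,w_i\}$, then $\Psi_i$ leaves this pattern undisturbed, so multiplicity $2$ is preserved in all cases.

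Finally, set $\Psi \colonequals \Psi_s\circ\cdots\circ\Psi_1$. Since the twin pairs are pairwise disjoint, the $\Psi_i$ operate on disjoint portions of each partition and commute; thus $\Psi$ is well-defined and remains a multigraph automorphism. Applied to $P$, each $\Psi_i$ splits $\{u_i,w_i\}$ into singletons and consumes one empty part; after all $s$ toggles, $P$ is transformed into $\{\{v_1\},\dots,\{v_r\},\{u_1\},\{w_1\},\dots,\{u_s\},\{w_s\}\}=Q$, as desired. The main technical obstacle is the bookkeeping around empty parts when a $v$-move interacts with $\Psi_i$: one must confirm that the relevant empty-part counts always suffice both for $\Psi_i$ to be applied on both endpoints of an edge and for the corresponding $v$-move to be mimicked in $\Psi_i(R)$. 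Both checks reduce to the same inequality showing that $R$ has enough empties whenever $\{u_i,w_i\}$ is a part.
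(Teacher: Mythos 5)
Your proof is correct and follows essentially the same route as the paper: both use Lemma~\ref{lem:pair-dominating-twins-max} to see that each dominating twin pair is either merged or split in every stable partition, and then define the flip involution sending $P$ to $Q$, checking edge and multiplicity preservation by the same case analysis on whether the moved vertex lies in a twin pair. The only difference is presentational — you factor the map into commuting per-pair involutions, whereas the paper flips all pairs simultaneously.
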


\begin{proof}
By Lemmas~\ref{lem:D-max-two} and \ref{lem:pair-dominating-twins-max} we may write
\[
P=\bigl\{\{v_1\},\dots,\{v_r\},\{u_1,w_1\},\dots,\{u_s,w_s\}, \text{ empty parts}\bigr\},
\]
where each pair $\{u_i,w_i\}$ consists of (nonadjacent) dominating twins.

Let $R$ be an arbitrary stable partition of $V(G)$. Fix $1\leq i\leq s$. Since $u_i$ and $w_i$ are adjacent to all vertices outside $\{u_i,w_i\}$ and are nonadjacent to each other, they cannot share a part with any other vertex. Thus, in $R$ they appear either as a single part $\{u_i,w_i\}$ or as two singletons $\{u_i\},\{w_i\}$.

Define an involution $\Psi$ on $V(\BellMulti{n}{G})$ by \emph{flipping all pairs simultaneously}: given $R$,
\begin{itemize}
\item if $\{u_i,w_i\}$ is a part of $R$, replace it by the singletons
  $\{u_i\}$ and $\{w_i\}$ in $\Psi(R)$;
\item if $\{u_i\}$ and $\{w_i\}$ are singleton parts in $R$, replace them
  by the part $\{u_i,w_i\}$ in $\Psi(R)$;
\item leave all other parts unchanged.
\end{itemize}
The resulting partition $\Psi(R)$ is stable with $n$ parts, and $\Psi$ is an involution.

To see that $\Psi$ is an automorphism, let $R\sim_x R'$. If $x\notin\{u_i,w_i\}$ for all $i$, then the configuration of each pair $\{u_i,w_i\}$ is the same in $R$ and $R'$; flipping all pairs in both partitions does not affect the move at $x$, so $\Psi(R)\sim_x\Psi(R')$ with the same multiplicity.

If $x\in\{u_i,w_i\}$, say $x=u_i$, then in any stable partition the only possible nontrivial move of $u_i$ is to toggle the pair $\{u_i,w_i\}$ between
merged and split: $u_i$ cannot join any other part because it is adjacent to all vertices outside $\{u_i,w_i\}$. Thus, $R'$ is obtained from $R$ by such a toggle, and by definition, the same toggle transforms $\Psi(R)$ into $\Psi(R')$. Again adjacency (and multiplicity) is preserved, so $\Psi$ is an
automorphism.

In $P$, each $\{u_i,w_i\}$ is a $2$-element part. Applying $\Psi$ to $P$ splits all these pairs and leaves the singleton parts $\{v_j\}$ unchanged, so $\Psi(P)=Q$.
\end{proof}

\subsection{The neighborhood of the singleton partition}

Having just determined how to identify (up to automorphism) the singleton partition in the Bell multigraph, we prove that the neighborhood of such a partition is the line graph of the base graph.

\begin{prop}\label{prop:NQ-is-linegraph}
The induced subgraph $\B_n(G)[N(Q)]$ is isomorphic to the line graph
$L(\overline{G})$.
\end{prop}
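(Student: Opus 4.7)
The plan is to construct an explicit bijection between $E(\overline{G})$ and $N(Q)$ and then verify that this bijection preserves adjacency.

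First, I would enumerate the neighbors of $Q$. Any neighbor $P\sim_v Q$ satisfies $P-v=Q-v$, so after removing $v$ both partitions agree on all parts outside $\{v\}$. Since $Q$ consists of $n$ singletons, the only way to obtain a stable $P\neq Q$ via such a modification is to move $v$ out of its singleton part $\{v\}$ and into another part. The new part must be independent and nonempty, so it must be another singleton $\{u\}$ with $uv\notin E(G)$, producing $P=Q_{uv}\colonequals (Q\setminus\{\{u\},\{v\}\})\cup\{\{u,v\},\emptyset\}$. Conversely, every nonedge $uv\in E(\overline{G})$ yields a valid neighbor $Q_{uv}$, and distinct nonedges yield distinct partitions. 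Hence the map $\phi\colon E(\overline{G})\to N(Q)$, $uv\mapsto Q_{uv}$, is a bijection.

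Next, I would verify the adjacency correspondence. Fix two nonedges $e=ab$ and $f=cd$ of $G$ and ask when $Q_e\sim Q_f$ in $\B_n(G)$.

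\emph{Shared vertex case.} If $e$ and $f$ share a vertex, say $a=c$ with $b\neq d$, then direct computation gives $Q_e-a = \{\{b\},\{d\},\emptyset,\emptyset,\ldots\}\cup R = Q_f-a$, where $R$ consists of the singletons $\{w\}$ for $w\notin\{a,b,d\}$. (The independence of $\{a,d\}$ is guaranteed by $f\in E(\overline{G})$.) Hence $Q_e\sim_a Q_f$.

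\emph{Disjoint case.} If $\{a,b\}\cap\{c,d\}=\emptyset$, then $Q_e$ contains the part $\{a,b\}$ and singletons $\{c\},\{d\}$, while $Q_f$ contains $\{c,d\}$ and singletons $\{a\},\{b\}$. For any vertex $v$, removing $v$ from $Q_e$ preserves at least one of the $2$-element parts, while removing $v$ from $Q_f$ similarly preserves a differing $2$-element part. A short case check on $v\in\{a,b,c,d\}$ versus $v\notin\{a,b,c,d\}$ shows that the resulting multisets never coincide, so $Q_e\not\sim Q_f$.

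Since two edges of $\overline{G}$ are adjacent in $L(\overline{G})$ exactly when they share a vertex, $\phi$ is an isomorphism of simple graphs $L(\overline{G})\to\B_n(G)[N(Q)]$. The proof is almost entirely bookkeeping; the only subtle step is the disjoint-case analysis, where one must be careful with the multiset semantics of partitions (several empty parts, unordered parts). Everything else follows directly from the definition of adjacency and the structure of $Q$.
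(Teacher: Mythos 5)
Your proposal is correct and follows essentially the same approach as the paper: build the bijection $uv \mapsto Q_{uv}$ between $E(\overline{G})$ and $N(Q)$, then verify adjacency is preserved by examining shared versus disjoint nonedges. One tiny bookkeeping slip: in the shared-vertex case, $Q_e - a$ has exactly one empty part (not two as the display $\{\{b\},\{d\},\emptyset,\emptyset,\ldots\}$ suggests), but this does not affect the correctness of the conclusion $Q_e - a = Q_f - a$.
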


\begin{proof}
Every neighbor of $Q$ is obtained by merging two singleton parts corresponding to a nonedge of $G$: for $uv\in E(\overline{G})$ let $Q_{uv}$ be the partition in which $\{u,v\}$ is a part and all other vertices are singletons. The map
\[
\Phi\colon N(Q)\to E(\overline{G}),\qquad \Phi(Q_{uv})=uv
\]
is a bijection.

Two neighbors $Q_{uv}$ and $Q_{xy}$ are adjacent in $\B_n(G)$ if and only if they differ by a single move. This occurs if and only if the pairs $\{u, v\}$ and $\{x, y\}$ intersect. If $\{u, v\}\cap \{x, y\}\neq \emptyset$, say $v=y$, then moving $v$ from $\{u,v\}$ to the singleton $\{x\}$ transforms $Q_{uv}$ into $Q_{xv}$, so $Q_{uv}\sim_{v} Q_{xv}$. Conversely, any adjacency $Q_{uv}\sim_{w} Q_{xy}$ forces $w\in \{u, v\}\cap \{x, y\}$. Thus, $Q_{uv}$ and $Q_{xy}$ are adjacent exactly when the vertices $uv$ and $xy$ are adjacent in $L(\overline{G})$, and hence $\B_n(G)[N(Q)]\cong L(\overline{G})$.
\end{proof}

\subsection{Reconstruction from line graphs}

We now complete the reconstruction of $G$. By Lemma~\ref{lem:involution-max-degree}, we can identify the singleton partition $Q$ up to automorphism. By Proposition~\ref{prop:NQ-is-linegraph}, we can construct the line graph $L(\overline{G})$. We then appeal to Whitney's Isomorphism Theorem~\cite{Whi32}, which states that connected graphs with isomorphic line graphs are themselves isomorphic, with the single exception of the triangle $K_3$ and the claw $K_{1,3}$; indeed, $L(K_3) \cong L(K_{1,3}) \cong K_3$. Thus, we can reconstruct $\overline{G}$ from $L(\overline{G})$ uniquely, except when a connected component of $\overline{G}$ is isomorphic to one of these two graphs.

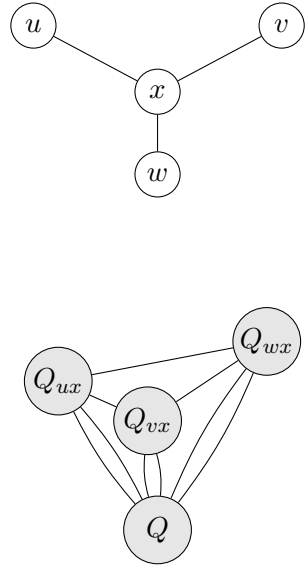
\begin{figure}[ht]
    \centering
    \begin{tikzpicture}[
        gvertex/.style={
            circle,
            draw,
            inner sep=1pt,
            minimum size=6mm, 
            font=\small
        },
        vertex/.style={
            circle,
            draw,
            inner sep=1pt,
            minimum size=9mm, 
            font=\small
        },        g_vertex/.style={gvertex},
        b_vertex/.style={vertex, fill=gray!20},
        scale=1.1,
        xyz_coords/.style={x={(-0.6cm, -0.4cm)}, y={(1cm, 0cm)}, z={(0cm, 1cm)}}
    ]

    \begin{scope}[local bounding box=K3]
        \node at (0, 4) {Case 1: $\overline{G}$ component is $K_3$};

        \node[g_vertex] (u1) at (0, 3.0) {$u$};
        \node[g_vertex] (v1) at (-1, 2.0) {$v$};
        \node[g_vertex] (w1) at (1, 2.0) {$w$};
        \draw (u1)--(v1)--(w1)--(u1);

        \begin{scope}[yshift=-1cm, xyz_coords, scale=0.6]
           
            \coordinate (C_mid1) at (0, -2, 0);
            \coordinate (C_mid2) at (2, 1, 0);
            \coordinate (C_mid3) at (-2, 1, 0);
            \coordinate (C_top) at (0, 0, 3);
            \coordinate (C_bot) at (0, 0, -3);

            \node[b_vertex] (Q1) at (C_bot) {$Q$};

            \node[b_vertex] (P1uv) at (C_mid1) {$Q_{uv}$};
            \node[b_vertex] (P1vw) at (C_mid2) {$Q_{vw}$};
            \node[b_vertex] (P1wu) at (C_mid3) {$Q_{wu}$};

            \draw (Q1) to[bend left=7] (P1uv); \draw (Q1) to[bend right=7] (P1uv);
            \draw (Q1) to[bend left=10] (P1vw); \draw (Q1) to[bend right=10] (P1vw);
            \draw (Q1) to[bend left=7] (P1wu); \draw (Q1) to[bend right=7] (P1wu);

            \draw (P1uv)--(P1vw);
            \draw (P1vw)--(P1wu);
            \draw (P1wu)--(P1uv);

            \node[b_vertex, draw] (R1) at (C_top) {$Q_{uvw}$};
            
            \draw (R1)--(P1uv); \draw (R1)--(P1vw); \draw (R1)--(P1wu);

            \node[font=\footnotesize, color=red, right=0.2cm of R1] {};
        \end{scope}
    \end{scope}

    \begin{scope}[xshift=6.5cm, local bounding box=K13]
        \node at (0, 4) {Case 2: $\overline{G}$ component is $K_{1,3}$};

        \node[g_vertex] (u2) at (0,   2.5) {$x$};
        \node[g_vertex] (v2) at (-1.5,3.3) {$u$};
        \node[g_vertex] (w2) at (0,   1.5) {$w$};
        \node[g_vertex] (x2) at (1.5, 3.3) {$v$};
        \draw (u2)--(v2); \draw (u2)--(w2); \draw (u2)--(x2);

         \begin{scope}[yshift=-1cm, xyz_coords, scale=0.6]
       
            \coordinate (C_mid1) at (0, -2, 0);
            \coordinate (C_mid2) at (2, 1, 0);
            \coordinate (C_mid3) at (-2, 1, 0);
            \coordinate (C_top) at (0, 0, 3);
            \coordinate (C_bot) at (0, 0, -3);
            \node[b_vertex] (Q1) at (C_bot) {$Q$};
            \node[b_vertex] (P1uv) at (C_mid1) {$Q_{ux}$};
            \node[b_vertex] (P1vw) at (C_mid2) {$Q_{vx}$};
            \node[b_vertex] (P1wu) at (C_mid3) {$Q_{wx}$};

            \draw (Q1) to[bend left=7] (P1uv); \draw (Q1) to[bend right=7] (P1uv);
            \draw (Q1) to[bend left=10] (P1vw); \draw (Q1) to[bend right=10] (P1vw);
            \draw (Q1) to[bend left=7] (P1wu); \draw (Q1) to[bend right=7] (P1wu);
=
            \draw (P1uv)--(P1vw);
            \draw (P1vw)--(P1wu);
            \draw (P1wu)--(P1uv); 
            \node[font=\footnotesize, color=red, right=0.2cm of R1] {};
        \end{scope}
    \end{scope}

    \end{tikzpicture}
    \caption{The presence or absence of a common neighbor $Q_{uvw}$ (distinct from $Q$) distinguishes the $K_3$ case from the $K_{1,3}$ case.}
    \label{fig:whitney_ambiguity}
\end{figure}

The following lemma, whose main idea is illustrated in Figure~\ref{fig:whitney_ambiguity} resolves this ambiguity by examining the common neighbors of the corresponding triangles in $\BellMulti{n}{G}$.

\begin{lemma}[Distinguishing $K_3$ and $K_{1,3}$]\label{lem:distinguish-triangles}
Let $T$ be a triangle in $\B_n(G)[N(Q)]$, identified with a triangle in $L(\overline{G})$ via Proposition~\ref{prop:NQ-is-linegraph}.
\begin{enumerate}
    \item If $T$ corresponds to a $K_3$ component in $\overline{G}$, then the vertices of $T$ share a common neighbor in $\BellMulti{n}{G}$ distinct from $Q$.
    \item If $T$ corresponds to a $K_{1,3}$ component in $\overline{G}$, then $Q$ is the unique common neighbor of $T$ in $\BellMulti{n}{G}$.
\end{enumerate}
\end{lemma}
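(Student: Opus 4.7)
The plan is to treat the two cases separately: in the $K_3$ case I construct an explicit new common neighbor, and in the $K_{1,3}$ case I use Theorem~\ref{thm:clique_classification} to show $Q$ is the only possibility. In Case~1, let $\{u,v,w\}$ be the vertices of the $K_3$ component of $\overline{G}$, so $\{u,v,w\}$ is an independent set in $G$. Let $R$ be the stable $n$-partition with $\{u,v,w\}$ as one part, two empty parts, and each remaining vertex as a singleton. A direct multiset check shows $R-w=Q_{uv}-w$, and symmetrically $R-u=Q_{vw}-u$ and $R-v=Q_{uw}-v$, so $R$ is adjacent in $\B_n(G)$ to each vertex of $T$; the size-$3$ part forces $R\neq Q$.

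For Case~2, let $x$ be the center and $u,v,w$ the leaves of the $K_{1,3}$ component, so $xu,xv,xw\notin E(G)$, $uv,vw,uw\in E(G)$, and each of $u,v,w$ has $\{x\}$ as its sole non-neighbor in $G$. First I would observe that $Q_{xu}-x=Q_{xv}-x=Q_{xw}-x$ as multisets (each equals the all-singleton partition of $V(G)\setminus\{x\}$ plus one empty part), so every edge of $T$ is realized by $x$, and $T$ is an $S$-triangle with anchor $x$. Given a common neighbor $P$, the $4$-clique $\{P,Q_{xu},Q_{xv},Q_{xw}\}$ in $\B_n(G)$ is, by Theorem~\ref{thm:clique_classification}, either an $S$-clique or a $T$-tetrahedron. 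If it is an $S$-clique, Lemma~\ref{lem:anchor_uniqueness} forces its anchor to be $x$, so $P\sim_x Q_{xu}$; the stable ways to reinsert $x$ into $Q_{xu}-x$ are (using that $x$'s non-neighbors in $G$ are exactly $\{u,v,w\}$) restricted to $Q,Q_{xu},Q_{xv},Q_{xw}$, forcing $P=Q$.

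Otherwise, the $4$-clique is a $T$-tetrahedron, and because $T$ itself is an $S$-triangle, some face containing $P$ must be a $T$-triangle, say $\{P,Q_{x\alpha},Q_{x\beta}\}$ for distinct $\alpha,\beta\in\{u,v,w\}$. Since $Q_{x\alpha}$ and $Q_{x\beta}$ agree outside $\{x,\alpha,\beta\}$, Lemma~\ref{lem:triple_vertex} forces the realizer set of this $T$-triangle to be exactly $\{x,\alpha,\beta\}$ and the three restrictions to $\{x,\alpha,\beta\}$ to form either a cyclic pattern (three distinct pair-plus-singleton partitions) or a radial pattern (one triple plus two pair-plus-singletons). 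The cyclic case requires $\{\alpha,\beta\}$ to be independent in $G$, and the radial case requires $\{x,\alpha,\beta\}$ to be independent; both fail because $\alpha\beta\in E(G)$. Hence the $T$-tetrahedron case is impossible and $P=Q$.

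The main obstacle will be identifying the right structural input: once one sees that $T$ is an $S$-triangle anchored at $x$, the clique classification severely restricts how a common neighbor $P$ can extend $T$, and the pairwise adjacency of $u,v,w$ in $G$ (forced by the $K_{1,3}$ being a full connected component of $\overline{G}$) precisely obstructs the remaining cyclic and radial configurations, collapsing everything to $P=Q$.
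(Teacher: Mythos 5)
Your proof is correct and follows essentially the same route as the paper: the explicit partition with part $\{u,v,w\}$ in the $K_3$ case, and in the $K_{1,3}$ case the observation that $T$ is an $S$-triangle anchored at $x$, the clique classification applied to the $4$-clique $\{P,Q_{xu},Q_{xv},Q_{xw}\}$, the edge $\alpha\beta\in E(G)$ ruling out the cyclic/radial ($T$-tetrahedron) possibilities, and the enumeration of stable reinsertions of $x$ in the $S$-clique case. One small justification to tighten: the realizer set of a putative $T$-triangle $\{P,Q_{x\alpha},Q_{x\beta}\}$ is $\{x,\alpha,\beta\}$ not because those two partitions agree outside $\{x,\alpha,\beta\}$ (they actually agree outside $\{x\}$ alone, so this agreement by itself forces nothing), but because Lemma~\ref{lem:triple_vertex} makes every restriction to the realizer set contain a non-singleton part, and the only non-singleton parts of $Q_{x\alpha}$ and $Q_{x\beta}$ are $\{x,\alpha\}$ and $\{x,\beta\}$, which forces $\alpha$ and $\beta$ (along with $x$, the unique realizer of the edge $Q_{x\alpha}Q_{x\beta}$) into the realizer set.
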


\begin{proof}
(1) Suppose the component is a $K_3$ on vertices $\{u,v,w\}$. Then $\{u,v,w\}$ is an independent set in $G$. Consequently, the partition
\[
  Q_{uvw} = \{\{u,v,w\}\} \cup \{\{z\} : z \notin \{u,v,w\}\}
\]
is stable. Observe that $Q_{uvw}$ is adjacent to $Q_{uv}$ (via $w$), to $Q_{vw}$ (via $u$), and to $Q_{wu}$ (via $v$). Since $Q_{uvw} \neq Q$, the vertices of $T$ share a second common neighbor.

(2) Suppose the component is a $K_{1,3}$ with edges $ux, vx, wx$. Then $T=\{Q_{ux}, Q_{vx}, Q_{wx}\}$. Since $K_{1, 3}$ is induced in $\overline{G}$, the set $\{u, v, w\}$ forms a clique in $G$. Furthermore, as this $K_{1, 3}$ is a component of $\overline{G}$, we have $N_G(u)=V(G)\setminus \{u, v, w, x\}$. Assume, to the contrary, that $T$ has a common neighbor $R$ other than $Q$. Then $\mathcal{K} = \{Q_{ux}, Q_{vx}, Q_{wx}, R\}$ forms a clique of size $4$ in $\BellMulti{n}{G}$.

We first rule out the possibility that $\mathcal{K}$ is a $T$-clique. By Theorem~\ref{thm:clique_classification}, any $T$-clique on four vertices must contain a $T$-triangle. First, observe that $Q_{ux}-x = Q-x = Q_{vx}-x$. Thus, the edge $Q_{ux}Q_{vx}$ is realized by $x$. By symmetry, every edge in $T$ is realized by $x$, so $T$ is an $S$-triangle, not a $T$-triangle.

Now, consider any other triangle in $\mathcal{K}$, say $\{Q_{ux}, Q_{vx}, R\}$. The edge $Q_{ux}Q_{vx}$ connects a partition containing $\{u,x\}$ to one containing $\{v,x\}$. By Lemma~\ref{lem:triple_vertex}, if this were a $T$-triangle, the third partition $R$ would require a non-singleton part $\{u,v\}$. However, $uv$ is an edge in $G$ (as $\{u,v,w\}$ is a clique), so $\{u,v\}$ cannot be a part in a stable partition. Thus, $\mathcal{K}$ contains no $T$-triangles, so $\mathcal{K}$ is not a $T$-clique.

Therefore, $\mathcal{K}$ must be an $S$-clique. By Lemma~\ref{lem:anchor_uniqueness}, there exists a unique anchor $a$ such that all edges in $\mathcal{K}$ are realized by $a$. Since $Q_{ux}$ and $Q_{vx}$ differ only by the placement of $x$, the anchor must be $x$. Since $R$ is anchored at $x$, we have $R-x = Q_{ux}-x = Q-x$. This implies $R$ consists of singletons on $V(G)\setminus \{x\}$. The vertex $x$ must be placed in a part compatible with its neighbors in $G$. Since $x$ is adjacent to all $z \notin \{u,v,w\}$, $x$ can only form a part with subsets of $\{u,v,w\}$. Because $\{u,v,w\}$ is a clique, at most one of these vertices can share a part with $x$, so the only stable partitions of this form are
\[
  Q,\quad Q_{ux},\quad Q_{vx},\quad Q_{wx}.
\]
Thus, $R$ cannot be a new common neighbor, a contradiction.
\end{proof}

We are now ready to prove the main reconstruction result for Bell coloring multigraphs. As mentioned in the introduction, a core of the graph $G$ is the subgraph obtained by removing all universal vertices (i.e., vertices of degree $|V(G)|-1$).

\begin{theorem}\label{thm:general-reconstruction-multigraph:intro}
Suppose $G_1$ and $G_2$ are two graphs of orders $n_1$ and $n_2$, respectively. If $\BellMulti{n_1}{G_1} \cong \BellMulti{n_2}{G_2}$, then $G_1$ and $G_2$ have isomorphic cores.
\end{theorem}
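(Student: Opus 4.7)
The plan is to reconstruct the core $G^{\circ}$ directly from the abstract multigraph $\BellMulti{n}{G}$, where $n=|V(G)|$, and then apply this reconstruction to both $G_1$ and $G_2$. The argument will proceed in four stages, each leveraging one of the preceding lemmas in Section~\ref{sec:reconstruction-multigraph}.

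First, I would identify a canonical copy of the singleton partition $Q=\{\{v_1\},\dots,\{v_n\}\}$. The set $\mathcal{D}(G)$ of totally-doubled vertices is visible from edge multiplicities alone, and by Lemma~\ref{lem:involution-max-degree} any vertex $Q^{\ast}\in\mathcal{D}(G)$ maximizing $\deg_{\B_n(G)}$ is the image of $Q$ under some automorphism of $\BellMulti{n}{G}$; I may therefore fix $Q^{\ast}=Q$ without loss of generality. Second, by Proposition~\ref{prop:NQ-is-linegraph}, the induced simple subgraph on $N(Q)$ is isomorphic to $L(\overline{G})$, with each neighbor $Q_{uv}$ of $Q$ corresponding to a nonedge $uv$ of $G$.

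Third, I would invoke Whitney's isomorphism theorem component-by-component to recover $\overline{G}$ from $L(\overline{G})$. The only obstruction is the classical ambiguity: a triangle component of $L(\overline{G})$ may arise either from a $K_3$ or from a $K_{1,3}$ component of $\overline{G}$. Lemma~\ref{lem:distinguish-triangles} resolves this intrinsically by testing whether the three vertices of the triangle share a common neighbor in $\BellMulti{n}{G}$ other than $Q$: the component is $K_3$ if such a neighbor exists and $K_{1,3}$ otherwise. Hence $\overline{G}$ is reconstructible up to isolated vertices. Fourth, the isolated vertices of $\overline{G}$ are exactly the universal vertices $U(G)$ of $G$; they contribute no edges to $L(\overline{G})$ and therefore cannot be detected. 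The recovered graph is accordingly $\overline{G^{\circ}}$, whose complement is $G^{\circ}$. Applying this procedure to $G_1$ and $G_2$ and using the given isomorphism $\BellMulti{n_1}{G_1}\cong \BellMulti{n_2}{G_2}$ then forces $G_1^{\circ}\cong G_2^{\circ}$.

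The main obstacle is a bookkeeping issue in Step~3: Lemma~\ref{lem:distinguish-triangles} is stated for triangles that correspond to entire $K_3$ or $K_{1,3}$ components of $\overline{G}$. For triangles embedded in a larger connected component of $L(\overline{G})$, Whitney's theorem already yields uniqueness without invoking the multigraph structure, so a small case analysis will be needed to verify that the common-neighbor test is applied only in the component-triangle cases where it is required, and consistently so. Packaging these three ingredients into a well-defined reconstruction algorithm for $G^{\circ}$ is otherwise routine.
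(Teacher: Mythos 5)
Your proposal is correct and follows essentially the same route as the paper's proof: locate $Q$ as a maximum-degree totally-doubled vertex (Lemma~\ref{lem:involution-max-degree}), recover $L(\overline{G})$ from $N(Q)$ (Proposition~\ref{prop:NQ-is-linegraph}), apply Whitney's theorem, and resolve the $K_3$ versus $K_{1,3}$ ambiguity with the common-neighbor test of Lemma~\ref{lem:distinguish-triangles}, concluding that only isolated vertices of $\overline{G}$ (i.e.\ universal vertices of $G$) are lost. The bookkeeping point you flag is handled the same way in the paper: the Whitney ambiguity only arises for whole triangle components of $\overline{G}$, so the test is applied exactly in those cases and larger components are determined uniquely.
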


\begin{proof}
We identify the set of totally-doubled partitions $\mathcal{D}(G)$ and locate the singleton partition $Q$ as an element of maximum degree in $\mathcal{D}(G)$ (unique up to automorphism). The neighborhood of $Q$ in $\B_n(G)$ is isomorphic to $L(\overline{G})$ by Proposition~\ref{prop:NQ-is-linegraph}. By Whitney's theorem, $L(\overline{G})$ determines the components of $\overline{G}$ up to the isomorphism $L(K_3) \cong L(K_{1,3})$. Lemma~\ref{lem:distinguish-triangles} resolves this ambiguity by checking whether the associated triangle in $\B_n(G)[N(Q)]$ has a common neighbor distinct from $Q$; if it does, then that component of $\overline{G}$ is $K_3$, and it is $K_{1,3}$ otherwise (see Figure~\ref{fig:whitney_ambiguity}). Since isolated vertices in $\overline{G}$ do not appear in $L(\overline{G})$, we reconstruct $\overline{G}$ up to isolated vertices, or equivalently, $G$ up to universal vertices.
\end{proof}

\bibliographystyle{alpha}
\bibliography{main}

\end{document}